\newcommand{\bracket}[1]{\ensuremath{\left[ #1 \right]}}
\newcommand{\braces}[1]{\ensuremath{\left\{ #1 \right\}}}
\newcommand{\parenth}[1]{\ensuremath{\left( #1 \right)}}
\newcommand{\refeqn}[1]{(\ref{eqn:#1})}
\newcommand{\reffig}[1]{Figure \ref{fig:#1}}
\newcommand{\trs}[1]{\mathrm{tr}\ensuremath{[#1]}}
\newcommand{\deriv}[2]{\ensuremath{\frac{\partial #1}{\partial #2}}}
\newcommand{\dderiv}[2]{\ensuremath{\dfrac{\partial #1}{\partial #2}}}
\newcommand{\SO}{\ensuremath{\mathsf{SO(3)}}}
\newcommand{\so}{\ensuremath{\mathfrak{so}(3)}}
\renewcommand{\Re}{\ensuremath{\mathbb{R}}}
\renewcommand{\d}{\ensuremath{\mathfrak{d}}}
\newcommand{\Sph}{\ensuremath{\mathsf{S}}}
\DeclareMathOperator*{\argmax}{arg\,max}
\DeclareMathOperator*{\argmin}{arg\,min}
\date{}
\newtheorem{definition}{Definition}[section]
\newtheorem{lem}{Lemma}[section]
\newtheorem{prop}{Proposition}[section]
\newtheorem{remark}{Remark}[section]
\newtheorem{theorem}{Theorem}[section]
\title{Bayesian Attitude Estimation with the Matrix Fisher Distribution on SO(3)}
\author{Taeyoung Lee%
\thanks{Taeyoung Lee, Mechanical and Aerospace Engineering, George Washington University, Washington DC 20052 {\tt tylee@gwu.edu}}
\thanks{This research has been supported in part by NSF under the grants CMMI-1243000, CMMI-1335008, and CNS-1337722.}
}
\begin{document}

\maketitle

\begin{abstract}
This paper focuses on a stochastic formulation of Bayesian attitude estimation on the special orthogonal group. In particular, an exponential probability density model for random matrices, referred to as the matrix Fisher distribution is used to represent the uncertainties of attitude estimates and measurements in a global fashion. Various stochastic properties of the matrix Fisher distribution are derived on the special orthogonal group, and based on these, two types of intrinsic frameworks for Bayesian attitude estimation are constructed. These avoid complexities or singularities of the attitude estimators developed in terms of quaternions. The proposed approaches are particularly useful to deal with large estimation errors or large uncertainties for complex maneuvers to obtain accurate estimates of the attitude. 
\end{abstract}

\section{Introduction}

Attitude estimation has been widely studied with various filtering approaches and assumptions~\cite{CraMarJGCD07}. One of the unique challenges is that the attitude dynamics evolve on a compact, nonlinear manifold, referred to as the special orthogonal group. Attitude is often parameterized by three dimensional coordinates to develop an estimator. However, it is well known that minimal, three-parameter attitude representations, such as Euler-angles or modified Rodriguez parameters, suffer from singularities. They are not suitable for large angle rotational maneuvers, as the type of parameters should be switched in the vicinity of singularities. 

Quaternions are another popular choice of attitude representations. There is a wide variety of extended Kalman filters and unscented Kalman filtered developed in terms of quaternions for attitude estimation~\cite{CraMarJGCD97,PsiJGCD00,CraMarAJGCD03}. Quaternions do not exhibit singularities in representing attitude, but as the configuration space of quaternions, namely the three-sphere, double covers the special orthogonal group, there exists ambiguity that a single attitude is represented with two antipodal points on the three-sphere. Furthermore, the covariance defined in terms of quaternions exhibit singularity due to the unit-length constraint. To overcome this, various minimal attitude representations, such as Rodrigues parameters have been used to represent errors in quaternions-based attitude filters~\cite{MarJGCD03}. As such, these approaches are not suitable for large estimation errors.

%More explicitly, a single attitude may be represented by two antipodal points on the three-sphere. The ambiguity should be carefully resolved in any quaternion-based attitude observer and controller, otherwise they may exhibit unwinding, for example~\cite{BhaBerSCL00}. Furthermore, quaternions are often considered as vectors in $\Re^4$, instead of incorporating the structures of the three-sphere carefully when designing attitude estimators. 

Instead, attitude observers have been designed directly on the special orthogonal group to avoid both singularities of local coordinates and the ambiguity of quaternions. These include  complementary filters~\cite{MahHamITAC08}, a robust filter~\cite{SanLeeSCL08}, and a global attitude observer~\cite{WuKauPICDC15}. These approaches construct deterministic attitude observers on the special orthogonal group in an intrinsic fashion, and particularly in~\cite{MahHamITAC08,WuKauPICDC15}, asymptotic stability to the true attitude is rigorously shown via Lyapunov stability analysis. They are also robust against fixed gyro bias, and the efficacy of the complementary filter in~\cite{MahHamITAC08} has been illustrated by various numerical examples and experiments. However, these deterministic approaches should be distinguished from the probabilistic Bayesian attitude estimation problem, considered in this paper, that focuses on estimating the probabilistic distribution of the attitude uncertainty. While it is challenging to show stochastic stability of Bayesian estimators in general, they yield a probability density that completely describes the stochastic properties of the estimated attitude, including the degree of confidence or the level of the knowledge.

%However, there are deterministic approaches that do not consider uncertainties in attitude estimates or measurements. 

%The prior efforts to construct \textit{probabilistic} attitude estimators on the special orthogonal group and the relevant research have been relatively unpopular compared with deterministic approaches, especially in the engineering community. Probability and stochastic processes on manifolds have been studied in~\cite{Eme89,Elw82}. Directional statistics have been applied in earth sciences and material sciences~\cite{MarJup99,Chi03}.

While they have been relatively unpopular in engineering communities, probability models and stochastic processes on a manifold have been studied in~\cite{Eme89,Elw82,MarJup99,Chi03}. In particular, earlier works on attitude estimation on the special orthogonal group include~\cite{LoEshSJAM79}, where a probability density function is expressed using noncommutative harmonic analysis~\cite{ChiKya01}. This idea of representing a probability density with harmonic analysis has been applied for uncertainty propagation and attitude estimation~\cite{ParKimP2IICRA05,MarPMDSAS05,LeeLeoPICDC08}. Using noncommutative harmonic analysis, an arbitrary probability density function on a Lie group can be defined globally up to any desired accuracy. Also, the Fokker-Planc k equation becomes transformed into ordinary differential equations via harmonic analysis, thereby providing an intrinsic solution for the attitude uncertainty propagation and attitude estimation. However, in practice, computing Fourier transforms on the special orthogonal group is infeasible for realtime implementation, as they involve integration of complicated unitary representations. Also, the order of Fourier transform should be increased as the estimated attitude distribution becomes more concentrated, thereby elevating the computational load as the estimator converges. For example, parallel computing has been applied for attitude uncertainty propagation with noncommutative harmonic analysis~\cite{LeeLeoPICDC08}.

Another notable recent work includes filtering techniques and measurement models developed in terms of exponential coordinates~\cite{ParLiuR08,Chi12,ChiKobPICDC14,LonWolRSV13}. This is perhaps the most natural approach to develop an estimator formally on an abstract Lie group, while taking advantages of the fact that the lie algebra is essentially a linear space. The limitation is that the exponential map is a local diffeomorphism around the identity element of the Lie group, and as such, the issue of a singularity remains.

This paper aims to construct probabilistic Bayesian attitude estimators on the special orthogonal group, while avoiding complexities of harmonic analysis and singularities of exponential coordinates. We use a specific form of the probability density, namely the matrix Fisher distribution~\cite{DowB72,KhaMarJRSSS77}, to represent uncertainties in the estimates of attitudes. The matrix Fisher distribution is a compact form of an exponential density model developed for random matrices, and the properties of the matrix Fisher distributions have been studied in directional statistics~\cite{MarJup99,Chi03}. When applied to the special orthogonal group, the matrix Fisher distribution is defined by 9 parameters, and therefore, it is comparable to the Gaussian distribution in $\Re^3$ that is completely defined by the three dimensional mean, and the six dimensional covariance. The first part of this paper is devoted to deriving various stochastic properties of the matrix Fisher distribution on the special orthogonal group, such as mean, moments and cumulative distributions, to understand the shape of the distribution. 

Based on those properties, Bayesian attitude estimators are constructed assuming that the uncertainties in attitude are represented by the matrix Fisher distribution. More explicitly, two types of propagation techniques are proposed. In the first method, an analytic expression is derived for the propagated first moment of the attitude uncertainty, and a propagated matrix Fisher distribution is constructed to match the moment. In the second method, an unscented transform is proposed for the matrix Fisher distribution on the special orthogonal group. For the measurement update step of Bayesian estimator, it is shown that the a posteriori distribution conditioned by arbitrary number of attitude measurements or direction measurements, follows a matrix Fisher distribution under mild assumptions. 

Combining the proposed propagation techniques with the measurement update, probabilistic attitude estimators are formulated globally on the special orthogonal group. These completely avoid singularities of attitude representations or covariance arising in other quaternion-based filters, and they are in contrast to deterministic, Luenberg-like attitude observers that do not incorporate uncertainties. The matrix Fisher distribution is rich enough to represent various attitude uncertainties, while avoiding complexities and challenges of non-commutative harmonic analysis in practice. It is also demonstrated that the proposed estimator exhibits excellent convergence properties for challenging cases with large initial estimation errors and large uncertainties.%, such as completely unknown initial attitude, where the conventional estimation techniques tends to diverge.

In short, this paper presents an intrinsic, but practical formulation of stochastic attitude estimation on the special orthogonal group, which is particularity useful for challenging scenarios of large uncertainties and large errors. There is a great potential of applying the proposed matrix Fisher distribution on the special orthogonal group to various other stochastic analysis, such as stochastic optimal control or parameter estimation in attitude dynamics. 

%The preliminary results of this paper have been presented in~\cite{LeePACC16}. This paper includes numerous nontrivial revisions and extensions, such as modified singular values, first-order attitude estimation, and unscented transform with maximum uncertainties. 

This paper is organized as follows. The matrix Fisher distributions on the special orthogonal group and several stochastic properties are introduced at Section \ref{sec:MF}. Two types of attitude estimator are proposed in Section \ref{sec:FAE} and \ref{sec:UAE}, respectively, followed by numerical examples and conclusioins. 

\section{Matrix Fisher Distribution on $\SO$}\label{sec:MF}

Directional statistics deals with statistics for unit-vectors and rotations in $\Re^n$~\cite{MarJup99,Chi03}, where various  probability distributions on nonlinear compact manifolds are defined, and statistical analysis, such as inference and regressions are formulated in manifolds. 
In particular, the matrix Fisher distribution (or von Mises-Fisher matrix distribution) is a compact exponential density for random matrices introduced in~\cite{DowB72,KhaMarJRSSS77}. Interestingly, most of the prior work on the matrix Fisher distributions in directional statistics are developed for the Stiefel manifold, $\mathsf{V}_k(\Re^n)=\{X\in\Re^{n\times k}\,|\, XX^T=I_{n\times n}\}$, i.e., the set of $k$ orthonormal vectors in $\Re^n$. 

The configuration manifold for the attitude dynamics of a rigid body is the three-dimensional special orthogonal group, 
\begin{align*}
\SO = \{R\in\Re^{3\times 3}\,|\, R^TR=I_{3\times 3},\,\mathrm{det}[R]=1\},
\end{align*}
where each rotation matrix corresponds the linear transformation of the representation of a vector from the body-fixed frame to the inertial frame. This section focuses on formulating the matrix Fisher distribution on $\SO$, and deriving various stochastic properties. These will be utilized in the development of attitude estimators later.

%The statistical properties of the matrix Fisher distribution on the Stiefel manifold are distinct from those formulated on $\SO$, which are presented in this section.
% on the Stiefel manifold. In this section, the matrix Fisher distribution is defined on $\SO$, and several properties are derived. %These will be utilized in the development of attitude estimators later. 

Throughout this paper, the Lie algebra of $\SO$ is denoted by $\so=\{S\in\Re^{3\times 3}\,| S=-S^T\}$. The \textit{hat} map: $\wedge:\Re^3\rightarrow \so$ is an isometry between $\so$ and $\Re^3$ defined such that $\hat x = -(\hat x)^T$, and $\hat x y =x\times y$ for any $x,y\in\Re^3$. The inverse of the hat map is denoted by the \textit{vee} map: $\vee:\so\rightarrow\Re^3$. The two-sphere is the set of unit-vectors in $\Re^3$, i.e., $\Sph^2=\{q\in\Re^3\,|\,\|q\|=1\}$, and the $i$-th standard basis of $\Re^3$ is denoted by $e_i\in\Sph^2$ for $i\in\{1,2,3\}$.  The set of circular shifts of $(1,2,3)$ is defined as $\mathcal{I}=\{(1,2,3),(2,3,1),(3,1,2)\}$. The Frobenius norm of a matrix $A\in\Re^{3\times 3}$ is defined as $\|A\|_F=\sqrt{\trs{A^TA}}$. 

The subsequent developments require the modified Bessel function of the first kind~\cite{AbrSte65}. For convenience, the definition of the zeroth order function, and the first-order function are copied below. For any $x\in\Re$,
\begin{align}
I_0(x) &= \frac{1}{\pi}\int_0^\pi \exp(x\cos\theta)\,d\theta
=\sum_{n=0}^\infty \frac{(\frac{1}{2}x)^{2n}}{(n!)^2},\label{eqn:I0}\\
I_1(x) &= \frac{1}{\pi}\int_0^\pi \cos\theta\exp(x\cos\theta)\,d\theta=\frac{d}{dx}I_0(x),\label{eqn:I1}
\end{align}
which satisify
\begin{alignat}{2}
I_0(0)&=1,&\qquad 1\leq I_0(|x|)&=I_0(-|x|),\label{eqn:pI0}\\
I_1(0)&=0,&\qquad  0\leq I_1(|x|)&=-I_1(-|x|),\label{eqn:pI1}
\end{alignat}
i.e., $I_0(x)$ is an even-function of $x$ greater than or equal to $1$, and $I_1(x)$ is an odd-function passing through the first and the third quadrants. 

\subsection{Matrix Fisher Distribution on $\SO$}

The matrix Fisher distribution on $\SO$ is a member of exponential families that is defined by 9  parameters as follows. 

\begin{definition}\label{def:MF}
A random rotation matrix $R\in\SO$ is distributed according to a matrix Fisher distribution, if its probability density function is defined relative to the uniform distribution on $\SO$ as
\begin{align}
p(R)=\frac{1}{c(F)}\exp(\trs{F^T R}),\label{eqn:MF}
\end{align}
where $F\in\Re^{3\times 3}$, and $c(F)\in\Re$ is a normalizing constant defined such that $\int_{\SO} p(R)\, dR=1$. This is also denoted by $R\sim\mathcal{M}(F)$.%Then, it is stated that $R$ is distributed according to the matrix Fisher distribution with a matrix parameter $F$, which is denoted by $R\sim\mathcal{M}(F)$. 
\end{definition}

The above definition implies that the normalizing constant is given by
\begin{align}
c(F) = \int_{\SO} \exp(\trs{F^T R}) dR. \label{eqn:cF}
\end{align}
To evaluate $c(F)$, and therefore $p(R)$, the measure $dR$ on $\SO$ should be defined explicitly. For the Lie group $\SO$, there is a bi-invariant measure, referred to as \textit{Haar} measure, which is unique up to scalar multiples~\cite{ChiKya01}. Throughout this paper, the Haar measure $dR$ is scaled such that $\int_{\SO} dR=1$. In other words, the uniform distribution on $\SO$ is given by $p(R)=1$ for the selected Haar measure. This is equivalent to state that \refeqn{MF} is defined relative to the uniform distribution on $\SO$. For instance, when $R\in\SO$ is parameterized by the 3--1--3 Euler angles, i.e., $R(\alpha,\beta,\gamma)=\exp(\alpha\hat e_3)\exp(\beta\hat e_1)\exp(\gamma\hat e_3)$ for $\alpha,\gamma\in[0,2\pi)$ and $\beta\in[0,\pi]$, the normalized Haar measure is given by
\begin{align*}
dR(\alpha,\beta,\gamma)=\frac{1}{8\pi^2} \sin\beta\, d\alpha\, d\beta\, d\gamma.
\end{align*}

While there are various approaches to evaluate the normalizing constant for the matrix Fisher distribution on the Stiefel manifold, few papers deal with calculation of the normalizing constant \refeqn{cF} for $\SO$. A method based on the gradient descent is introduced in~\cite{SeiShiJMA13}, which involves the numerical solution of multiple ordinary differential equations. The normalizing constant is expressed as a one-dimensional integration in~\cite{WooAJS93}, but the given result is erroneous as the change of volume over a certain transformation is not considered properly. We follow the approach of~\cite{WooAJS93}, to derive another corrected expression of the normalizing constant. It turns out that the normalizing constant $c(F)$ depends on the singular values of $F$ as discussed below.

Suppose the singular value decomposition of $F$ is given by
\begin{align}
F= U' S' (V')^T,\label{eqn:USV}
\end{align}
where $S'\in\Re^{3\times 3}$ is a diagonal matrix composed of the singular values $ s'_1\geq s'_2\geq s'_3\geq 0$ of $F$, sorted in descending order, and $U',V'\in\Re^{3\times 3}$ are orthonormal matrices. Since $(U')^TU'=(V')^TV'=I_{3\times 3}$, the determinant of $U'$ or $V'$ is $\pm 1$. Note the orthogonal matrices $U'$ and $V'$ are not necessarily rotation matrices in $\SO$, as their determinant is possibly $-1$. To resolve this, we introduce a \textit{proper} singular value decomposition as follows.

\begin{definition}{(\cite{MarJAS88})}\label{def:PSVD}
For a given $F\in\Re^{3\times 3}$, let the singular value decomposition be given by \refeqn{USV}. The `proper' singular value decomposition of $F$ is defined as
\begin{align}
F= U SV^T,\label{eqn:USVp}
\end{align}
where the rotation matrices $U,V\in\SO$, and the diagonal matrix $S\in\Re^{3\times 3}$ are 
\begin{align}
U &= U' \mathrm{diag}[1,\,1,\, \mathrm{det}[U']],\\
S & = \mathrm{diag}[s_1,\,s_2,\,s_3]=\mathrm{diag}[s'_1,\,s'_2,\, \mathrm{det}[U'V']s'_3],\\
V &= V' \mathrm{diag}[1,\,1,\, \mathrm{det}[V']].
\end{align}
\end{definition}

The definition of $U$ and $V$ are formulated such that $\mathrm{det}[U]=\mathrm{det}[V]=+1$ to ensure $U,V\in\SO$. Note that the first two proper singular values $s_1,s_2$ are non-negative, but the last one $s_3$ could be negative when $\mathrm{det}[U'V'] =-1$ and $s_3' >0$. As $s_1',s_2',s_3'$ are sorted in descending order,
\begin{gather}
0\leq |s_3|\leq s_2\leq s_1,\label{eqn:s123}\\
0\leq s_2+s_3\leq s_3+s_1\leq s_1+s_2.\label{eqn:s12}
\end{gather}

%From the restriction that $\mathrm{det}[F]\geq 0$ given at Definition \ref{def:MF}, without loss of generality, we may assume that $\mathrm{det}[U]=\mathrm{det}[V]=1$ such that $U,V\in\SO$. More explicitly, when $\mathrm{det}[F]>0$, we have $\mathrm{det}[U]\mathrm{det}[V]>0$, i.e., the determinant of $U$ has the same sign as the determinant of $U$. Therefore, when $\mathrm{det}[U]=\mathrm{det}[V]=-1$, we multiply both of $U$ and $V$ by $-1$ without changing \refeqn{USV}, and take them as new $U,V\in\SO$ with $\mathrm{det}[U]=\mathrm{det}[V]=+1$. Next, when $\mathrm{det}[F]=0$, we have $s_i=0$ for some $i\in\{1,2,3\}$. In case $\mathrm{det}[U]=-1$, we multiply the $i$-column of $U$ by $-1$ without affecting \refeqn{USV}, and take it as a new $U\in\SO$ with $\mathrm{det}[U]=+1$. Similarly, we can ensure $V\in\SO$. Throughout this paper, it is considered that the singular value decomposition of $F$ includes the above procedures to guarantee $U,V\in\SO$. 

\begin{theorem}\label{thm:C}
Suppose the proper singular value decomposition of $F$ is given by \refeqn{USVp}. The normalizing constant for the matrix Fisher distribution \refeqn{cF} satisfies the following properties.
\begin{itemize}
\item[(i)] $c(F)=c(S)=c(\mathrm{diag}[s_i,s_j,s_k])$ for any $(i,j,k)\in\mathcal{I}$.
\item[(ii)] $c(F)=c(F^T)=c(Q_0 F Q_1)$ for any $Q_0,Q_1\in\SO$.
\item[(iii)] $e^{-s_1-s_2+s_3}\leq c(S)\leq e^{s_1+s_2+s_3}$, where the inequalities become strict when $S\neq 0_{3\times 3}$.
\item[(iv)] $c(S)$ is evaluated by a one-dimensional integral as
\begin{align}
c(S) & = \int_{-1}^1 \frac{1}{2}I_0\!\bracket{\frac{1}{2}(s_i-s_j)(1-u)} \nonumber\\
&\quad \times I_0\!\bracket{\frac{1}{2}(s_i+s_j)(1+u)}\exp (s_ku)\,du,\label{eqn:cS}
\end{align}
for any $(i,j,k)\in\mathcal{I}$.
\item[(v)] The first order derivatives of $c(S)$ are given by
\begin{subequations}\label{eqn:dcS}
\begin{align}
\deriv{c(S)}{s_i} 
& = \int_{-1}^1 \frac{1}{4}(1-u)I_1\!\bracket{\frac{1}{2}(s_i-s_j)(1-u)}\nonumber\\
&\quad \times  I_0\!\bracket{\frac{1}{2}(s_i+s_j)(1+u)}\exp (s_ku)\nonumber\\
&\quad +\frac{1}{4}(1+u)I_0\!\bracket{\frac{1}{2}(s_i-s_j)(1-u)}\nonumber\\
&\quad \times  I_1\!\bracket{\frac{1}{2}(s_i+s_j)(1+u)}\exp (s_ku)\,du,\label{eqn:dcSa}\\
\deriv{c(S)}{s_j} & = \int_{-1}^1 -\frac{1}{4}(1-u)I_1\!\bracket{\frac{1}{2}(s_i-s_j)(1-u)}\nonumber\\
&\quad \times  I_0\!\bracket{\frac{1}{2}(s_i+s_j)(1+u)}\exp (s_ku)\nonumber\\
&\quad +\frac{1}{4}(1+u)I_0\!\bracket{\frac{1}{2}(s_i-s_j)(1-u)}\nonumber\\
&\quad \times  I_1\!\bracket{\frac{1}{2}(s_i+s_j)(1+u)}\exp (s_ku)\,du,\label{eqn:dcSb}\\
\deriv{c(S)}{s_k} & = \int_{-1}^1 \frac{1}{2}I_0\bracket{\frac{1}{2}(s_i-s_j)(1-u)} \nonumber\\
&\quad\times I_0\bracket{\frac{1}{2}(s_i+s_j)(1+u)}u\exp (s_ku)\,du,\label{eqn:dcSc}
\end{align}
\end{subequations}
for any $(i,j,k)\in\mathcal{I}$, and they satisfy the following property
\begin{gather}
0\leq \deriv{c(S)}{s_i}+\deriv{c(S)}{s_j}\quad\mbox{for any $(i,j,k)\in\mathcal{I}$}\label{eqn:dcSij},\\
0\leq \left| \deriv{c(S)}{s_3}\right| \leq \deriv{c(S)}{s_2} \leq \deriv{c(S)}{s_1}.\label{eqn:dcS12}
\end{gather}

\item[(vi)] The second order derivatives of $c(S)$ are given by
\begin{align}
\frac{\partial^2 c(S)}{\partial s_i^2} & = \int_{-1}^1 \frac{1}{2}I_0\bracket{\frac{1}{2}(s_j-s_k)(1-u)}\nonumber\\
&\times I_0\bracket{\frac{1}{2}(s_j+s_k)(1+u)}u^2\exp (s_iu)\,du,\label{eqn:ddcSii}\\
\frac{\partial^2 c(S)}{\partial s_i\partial s_j} & = \int_{-1}^1 \frac{1}{4}I_1\bracket{\frac{1}{2}(s_j-s_k)(1-u)}\nonumber\\
&\times I_0\bracket{\frac{1}{2}(s_j+s_k)(1+u)}u(1-u)\exp (s_iu)\nonumber\\
& + \frac{1}{4}I_0\bracket{\frac{1}{2}(s_j-s_k)(1-u)} \nonumber\\
&\times I_1\bracket{\frac{1}{2}(s_j+s_k)(1+u)}u(1+u)\exp (s_iu)\,du,\label{eqn:ddcS}
\end{align}
for any $(i,j,k)\in\mathcal{I}$.
%\item[(vi)] The minimum value is $c(S)=1$ when $s_1=s_2=s_3=0$. 
\end{itemize}
%\footnote{The modified Bessel function $I_0(u)$ can be computed in Matlab via the command \texttt{besseli(0,u)}.}
\end{theorem}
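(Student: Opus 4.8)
The plan is to handle the six items in a logical order so that each builds on the previous ones, with the one-dimensional reduction in (iv) as the computational heart. I would first establish the invariance properties (ii) and (i). Starting from $c(F)=\int_{\SO}\exp(\trs{F^TR})\,dR$ in \refeqn{cF}, the identity $c(F)=c(F^T)$ follows from the substitution $R\mapsto R^T$, which preserves the (bi-invariant, hence inversion-invariant) Haar measure and sends $\trs{F^TR^T}=\trs{(RF)^T}=\trs{FR}$. For $c(Q_0FQ_1)=c(F)$, I would write $\trs{(Q_0FQ_1)^TR}=\trs{F^TQ_0^TRQ_1^T}$ and substitute $R'=Q_0^TRQ_1^T$, whose measure equals $dR$ by bi-invariance and which again ranges over $\SO$. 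Property (i) is then immediate: taking $Q_0=U^T,Q_1=V$ in \refeqn{USVp} gives $c(F)=c(S)$, and since every circular shift in $\mathcal I$ is realized by a permutation matrix of determinant $+1$ (hence in $\SO$), conjugating $S$ by such a matrix gives $c(S)=c(\mathrm{diag}[s_i,s_j,s_k])$.

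The core step is (iv), which I expect to be the main obstacle. Because $c$ depends only on $S$, I would parameterize $R$ by the $3$--$1$--$3$ Euler angles $R(\alpha,\beta,\gamma)$ with normalized Haar measure $\tfrac{1}{8\pi^2}\sin\beta\,d\alpha\,d\beta\,d\gamma$. Writing out $R_{11},R_{22},R_{33}$ and introducing the sum and difference angles $\phi=\alpha+\gamma$, $\psi=\alpha-\gamma$ collapses the exponent to $\trs{SR}=(s_1+s_2)\cos^2\tfrac\beta2\cos\phi+(s_1-s_2)\sin^2\tfrac\beta2\cos\psi+s_3\cos\beta$. The double integral over $\alpha,\gamma$ then factors, via the expansion $\exp(x\cos\theta)=I_0(x)+2\sum_{m\ge1}I_m(x)\cos m\theta$ and orthogonality, into $4\pi^2I_0((s_1+s_2)\cos^2\tfrac\beta2)\,I_0((s_1-s_2)\sin^2\tfrac\beta2)$, leaving only the $\beta$ integral. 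The substitution $u=\cos\beta$ (with $\cos^2\tfrac\beta2=\tfrac{1+u}2$, $\sin^2\tfrac\beta2=\tfrac{1-u}2$, $\sin\beta\,d\beta=-du$) yields \refeqn{cS} for $(i,j,k)=(1,2,3)$, the prefactor $\tfrac{1}{8\pi^2}\cdot4\pi^2=\tfrac12$ appearing automatically; the remaining cases in $\mathcal I$ follow from property (i). Getting the Haar-measure volume factor and the angular integration exactly right is precisely where the expression in~\cite{WooAJS93} goes wrong, so this is the delicate point.

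Property (iii) I would obtain directly from the integrand of \refeqn{cF} rather than from \refeqn{cS}. Using the axis--angle form $R_{ii}=\cos\theta+n_i^2(1-\cos\theta)$ gives $\trs{SR}=(s_1+s_2+s_3)-(1-\cos\theta)\sum_i s_i(1-n_i^2)$; bounding the weighted average $\sum_i s_in_i^2\in[s_3,s_1]$ together with \refeqn{s12} shows $-s_1-s_2+s_3\le\trs{SR}\le s_1+s_2+s_3$ over $\SO$, with the extrema attained only at $R=I$ and $R=\mathrm{diag}[-1,-1,1]$. Integrating $e^{\trs{SR}}$ against the probability measure $dR$ then gives (iii), the inequalities being strict when $S\neq0$ because the exponent is then non-constant. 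For (v) and (vi) I would differentiate \refeqn{cS} under the integral sign (justified by smoothness on the compact domain) using $\tfrac{d}{dx}I_0=I_1$ from \refeqn{I1}; the product rule on the two Bessel factors reproduces \refeqn{dcSa}--\refeqn{dcSc}, and a second differentiation of the cyclically relabeled forms reproduces \refeqn{ddcSii}--\refeqn{ddcS}.

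Finally, the monotonicity claims. For \refeqn{dcSij} I would add \refeqn{dcSa} and \refeqn{dcSb}; the $I_1[\tfrac12(s_i-s_j)(1-u)]$ terms cancel and the survivor has integrand $\tfrac12(1+u)I_0[\cdots]I_1[\tfrac12(s_i+s_j)(1+u)]e^{s_ku}$, which is nonnegative since $1+u\ge0$, $s_i+s_j\ge0$ by \refeqn{s12}, and $I_1$ is nonnegative on $[0,\infty)$ by \refeqn{pI1}. For \refeqn{dcS12} I would use the representation $\deriv{c(S)}{s_l}=\int_{\SO}R_{ll}\,e^{\trs{SR}}\,dR$ and, for each adjacent pair, the measure-preserving involution $R\mapsto PRP^{-1}$ with $P\in\SO$ the signed transposition swapping the two indices; pairing $R$ with its image shows that $\deriv{c(S)}{s_1}-\deriv{c(S)}{s_2}$ and $\deriv{c(S)}{s_2}-\deriv{c(S)}{s_3}$ have integrands of the form $(a-b)\bigl(e^{(s_1-s_2)a}-e^{(s_1-s_2)b}\bigr)\times(\text{positive})$, nonnegative because $s_1-s_2\ge0$ and $s_2-s_3\ge0$. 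Combining $\deriv{c(S)}{s_2}\ge\deriv{c(S)}{s_3}$ with $\deriv{c(S)}{s_2}\ge-\deriv{c(S)}{s_3}$ (the $(i,j)=(2,3)$ case of \refeqn{dcSij}) yields $\deriv{c(S)}{s_2}\ge|\deriv{c(S)}{s_3}|\ge0$ and $\deriv{c(S)}{s_1}\ge\deriv{c(S)}{s_2}$, completing \refeqn{dcS12}. This pairing argument for the ordering is the second point requiring care.
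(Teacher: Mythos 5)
Your proposal is correct, and in the two places where the real work happens it takes a genuinely different route from the paper. Items (i)--(ii), the derivative formulas in (v)--(vi), and the cancellation giving \refeqn{dcSij} are essentially the paper's arguments (for (iii) the paper argues via $\min_R p(R)<1<\max_R p(R)$ using the same axis--angle reduction of $\trs{S(U^TRV)}$, rather than integrating the pointwise bounds on the exponent; this is an immaterial difference). The main divergence is (iv): the paper passes to quaternions, writes $\trs{SQ(x)}=x^TBx$, computes the volume scaling $\mathcal{J}(x)=2$ (the step missed in \cite{WooAJS93}), and identifies $c(S)$ with the normalizing constant of a Bingham distribution on $\Sph^3$, quoting the one-dimensional integral representation of ${}_1F_1^{(2)}(\frac{1}{2},2;B)$ from the literature. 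You instead integrate directly in 3--1--3 Euler angles; I checked the reduction: $\trs{SR}=(s_1+s_2)\cos^2\frac{\beta}{2}\cos\phi+(s_1-s_2)\sin^2\frac{\beta}{2}\cos\psi+s_3\cos\beta$ is right, the $(\phi,\psi)$ integral factors into $4\pi^2I_0((s_1+s_2)\cos^2\frac{\beta}{2})I_0((s_1-s_2)\sin^2\frac{\beta}{2})$ (the Jacobian $\frac{1}{2}$ of $(\alpha,\gamma)\mapsto(\phi,\psi)$ exactly compensates the double covering of the torus), and with $u=\cos\beta$ the prefactor $\frac{1}{8\pi^2}\cdot 4\pi^2=\frac{1}{2}$ reproduces \refeqn{cS} verbatim. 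Your route buys a self-contained, elementary proof that never leaves $\SO$; the paper's route buys the explicit connection to the Bingham distribution, which it uses to diagnose the error in \cite{WooAJS93} and which is of independent interest. The second divergence is \refeqn{dcS12}: the paper simply subtracts \refeqn{dcSb} from \refeqn{dcSa} and observes the resulting integrand is nonnegative when $s_i\geq s_j$, then gets $\deriv{c(S)}{s_2}\geq 0$ from \refeqn{dcSa} with $(i,j,k)=(2,3,1)$; your argument via the measure-preserving involution $R\mapsto PRP$ with $P\in\SO$ a signed transposition, followed by the Chebyshev-type pairing in which $(a-b)\bigl(e^{(s_1-s_2)a}-e^{(s_1-s_2)b}\bigr)\geq 0$, is also valid, and has the minor advantage of not depending on the specific integral form \refeqn{dcS}, at the cost of extra machinery once (v) is already established. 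Both of your alternative arguments close all the needed steps, so there is no gap.
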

\begin{proof}
See Appendix~\ref{sec:PfC}.
\end{proof}

Theorem \ref{thm:C} implies that the normalizing constant of the matrix Fisher distribution on $\SO$ is evaluated by a one-dimensional integral, and it only depends on the proper singular values of $F$. Since \refeqn{cS} is invariant under the circular permutation of $(i,j,k)$, so is \refeqn{dcS}. For example, $\deriv{c(S)}{s_1}$ can be computed by any of \refeqn{dcSa}, \refeqn{dcSb}, or \refeqn{dcSc}. The second order derivatives \refeqn{ddcS} are derived from \refeqn{dcSc} for convenience, but other equivalent expressions for the second order derivatives can be certainly derived from \refeqn{dcSb} or \refeqn{dcSc}.

The first property yields the following transformation to obtain a shifted matrix Fisher distribution with a diagonal matrix parameter.

\begin{lem}\label{lem:Q}
Suppose $R\sim\mathcal{M}(F)$ for a matrix $F\in\Re^{3\times 3}$. The proper singular value decomposition of $F$ is given by \refeqn{USVp}. Define $Q=U^T R V\in\SO$. Then, $Q\sim \mathcal{M}(S)$.
\end{lem}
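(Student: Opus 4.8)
The plan is to treat this as a change-of-variables computation for densities on \SO, exploiting the bi-invariance of the Haar measure. Since $U,V\in\SO$, the map $R\mapsto U^T R V$ is a diffeomorphism of \SO\ with inverse $Q\mapsto UQV^T$, so the transformation is well-defined and invertible. The strategy is to push forward the density of $R$ through this map and show that the resulting density of $Q$ is exactly the matrix Fisher form with parameter $S$.

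First I would establish that the Haar measure is unchanged under the transformation. Because $dR$ is bi-invariant (left- and right-invariant), and the map $R=UQV^T$ is the composition of left translation by $U$ and right translation by $V^T$, we have $dR=dQ$; equivalently, the Jacobian of the substitution is unity. For any test function $h$, writing $E[h(Q)]=\int_{\SO}h(U^TRV)\,p(R)\,dR$ and substituting $R=UQV^T$ then yields the density of $Q$ as $p_Q(Q)=p(UQV^T)$, with no extra Jacobian factor.

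Next I would simplify the exponent. Substituting $R=UQV^T$ and $F=USV^T$ into $\trs{F^T R}$ gives $\trs{VS^TU^T U Q V^T}$, which collapses using $U^TU=V^TV=I_{3\times3}$ and the cyclic invariance of the trace to $\trs{S^TQ}$. Hence $p_Q(Q)=\frac{1}{c(F)}\exp(\trs{S^TQ})$. Invoking property (i) of Theorem~\ref{thm:C}, namely $c(F)=c(S)$, identifies the normalizing constant, so $p_Q(Q)=\frac{1}{c(S)}\exp(\trs{S^TQ})$, which is precisely the density of $\mathcal{M}(S)$ by Definition~\ref{def:MF}. Therefore $Q\sim\mathcal{M}(S)$.

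The only point requiring care is the measure-invariance step: one must justify rigorously that the two-sided translation preserves the normalized Haar measure rather than merely assuming it. This follows directly from bi-invariance, so I do not anticipate a genuine obstacle; the remaining trace manipulation and the appeal to Theorem~\ref{thm:C}(i) are routine.
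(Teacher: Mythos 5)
Your proposal is correct and follows essentially the same route as the paper's proof: both use the volume-preservation (bi-invariance of the Haar measure) of the map $R\mapsto U^TRV$, simplify $\trs{F^TR}$ to $\trs{S^TQ}$ via the substitution $R=UQV^T$, and identify the normalizing constant through $c(F)=c(S)$ from Theorem~\ref{thm:C}(i). Your write-up is merely more explicit about the push-forward and Jacobian justification, which the paper compresses into a single sentence.
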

\begin{proof}
As the transformation from $R$ to $Q$ is volume preserving, the probability density of $Q$ is given by
\begin{align*}
p_Q(Q) &=\frac{1}{c(S)} \exp(\trs{F^TR(Q)}).
\end{align*}
Substituting $R=UQV^T$ and \refeqn{USVp}, we can show $p_Q(Q)\propto \exp(\trs{S^TQ})$, %this reduces to
%\begin{align*}
%p_Q(Q)&=\frac{1}{c(S)}\exp(\trs{S^TQ}),
%\end{align*}
which follows $Q\sim\mathcal{M}(S)$. 
\end{proof}

\subsection{Moments and Mean Attitude}

Here, we characterize a matrix Fisher distribution via its moments and mean. The first two moments are derived as follows, for the case that the random rotation matrix is transformed such that the matrix parameter becomes diagonal as discussed in Lemma \ref{lem:Q}. 

\begin{theorem}\label{thm:Q}
Suppose $Q\sim\mathcal{M}(S)$ for a diagonal matrix $S=\mathrm{diag}[s_1,s_2,s_3]\in\Re^{3\times 3}$, satisfying \refeqn{s123}, \refeqn{s12}. Let $Q_{ij}$ be the $(i,j)$-th element of $Q$ for $i,j\in\{1,2,3\}$.
\begin{itemize}
\item[{(i)}] The first moment of each element of $Q$ is given by
\begin{align}
\mathrm{E}[Q_{ij}]
=\begin{cases}\dfrac{1}{c(S)}\dderiv{c(S)}{s_i}=\dderiv{\log c(S)}{s_i} & \mbox{if $i= j$},\\
0 & \mbox{otherwise},
\end{cases}\label{eqn:M1}
\end{align}
for $i,j\in\{1,2,3\}$.
\item[{(ii)}] The second moments of elements $Q$ are given by
\begin{align}
    \mathrm{E}[Q_{ii}Q_{jj}] = \dfrac{1}{c(S)}\dfrac{\partial^2 c(S)}{\partial s_i\partial s_j},\label{eqn:EQiijj}
\end{align}
for $i,j\in\{1,2,3\}$.
Also,
\begin{align}
    \mathrm{E}[Q_{ij}Q_{ij}] & = \frac{1}{(s_j^2-s_i^2)c(S)} \braces{ s_j \deriv{c(S)}{s_j}  - s_i \deriv{c(S)}{s_i} }, \label{eqn:EQijij}\\
    \mathrm{E}[Q_{ij}Q_{ji}] & = \frac{1}{(s_j^2-s_i^2)c(S)} \braces{ s_i \deriv{c(S)}{s_j}  - s_j \deriv{c(S)}{s_i} },\label{eqn:EQijji}
\end{align}
for $i\neq j\in\{1,2,3\}$.
Finally, $\mathrm{E}[Q_{ij}Q_{kl}]=0$ for any pair of $i,j,k,l\in\{1,2,3\}$ that does not belong to any of the above three cases. 
\end{itemize}
\end{theorem}
\begin{proof}
We first show the zero values of the moments. Let $\gamma_1,\gamma_2,\gamma_3\in\{-1,1\}$ be chosen such that $\gamma_1\gamma_2\gamma_3=1$, i.e.,
\begin{align}
(\gamma_1,\gamma_2,\gamma_3)\in\{(1,1,1),(1,-1,-1),(-1,-1,1),(-1,1,-1)\}.\label{eqn:li}
\end{align}
Define $\Gamma=\mathrm{diag}[\gamma_1,\gamma_2,\gamma_3]$. We can show $\Gamma=\Gamma^T=\Gamma^{-1}\in\SO$. Let $Z=\Gamma^T Q \Gamma\in\SO$ such that $Z_{ij}=\gamma_i\gamma_j Q_{ij}$ for any $i,j\in\{1,2,3\}$.  It is straightforward to show $Z\sim\mathcal{M}(S)$ since $\trs{S^TZ}=\sum_{i=1}^3 s_iZ_{ii} = \sum_{i=1}^3 s_i\gamma_i^2 Q_{ii}=\trs{S^TQ}$. Therefore,
\begin{align*}
\mathrm{E}[Q_{ij}]=\mathrm{E}[Z_{ij}]=\gamma_i\gamma_j\mathrm{E}[Q_{ij}],
\end{align*}
which implies that $\mathrm{E}[Q_{ij}]=0$ when $\gamma_i\gamma_j=-1$. In case of $i\neq j$, we can always choose pick one from \refeqn{li} such that $\gamma_i\gamma_j=-1$. For example, when $(\gamma_1,\gamma_2,\gamma_3)=(1,-1,-1)$, the above equation implies $\mathrm{E}[Q_{12}]=\mathrm{E}[Q_{21}]=\mathrm{E}[Q_{13}]=\mathrm{E}[Q_{31}]=0$. Similarly, we can show $\mathrm{E}[Q_{ij}]=0$ for any $i\neq j$.

Similarly, $\mathrm{E}[Q_{ij}Q_{kl}]=\gamma_i\gamma_j\gamma_k\gamma_l\mathrm{E}[Q_{ij}Q_{kl}]$. 
Suppose $(i=j\land k=l)$ or $(i=k\neq j=l)$ or $(i=l\neq j=k)$, i.e., $(i,j,k,l)$ follows the pattern of $(i,i,j,j)$, $(i,j,i,j)$, or $(i,j,j,i)$, where $i\neq j$ for the last two cases. 
Then $\gamma_i\gamma_j\gamma_k\gamma_l=1$ always.  
For the other cases, we can always select $(\gamma_1,\gamma_2,\gamma_3)$ from \refeqn{li} such that $\gamma_i\gamma_j\gamma_k\gamma_l=-1$, which implies $\mathrm{E}[Q_{ij}Q_{kl}]=0$. 
For instance, one can show $\mathrm{E}[Q_{11}Q_{12}]=0$ by selecting $(\gamma_1,\gamma_2,\gamma_3)=(1,-1,-1)$ such that $\gamma_1^3\gamma_2 = -1$. 

Next, we show the non-zero values. For $T\in\Re^{3\times 3}$, let the moment generating function of $Q$ be
\begin{align*}
M_Q(T) = \mathrm{E}[\exp(\trs{T^T Q})].
\end{align*}
Since $Q\sim \mathcal{M}(S)$, this is simplified as
\begin{align*}
M_Q(T)=\frac{1}{c(S)}\int_{\SO} \exp (\trs{(S+T)^T Q})\, dQ = \frac{c(S+T)}{c(S)}.
\end{align*}
The derivatives of the moment generating function at $T=0$ correspond to the moment, i.e.,
\begin{align*}
\mathrm{E}[Q_{ii}]=\frac{\partial}{\partial T_{ii}}\parenth{\frac{c(S+T)}{c(S)}}\bigg|_{T=0}
=\frac{1}{c(S)} \deriv{c(S)}{s_i}, 
\end{align*}
as $T_{ii}$ is a diagonal element of $T$.
Similarly, 
\begin{align*}
\mathrm{E}[Q_{ii}Q_{jj}]=\frac{\partial^2}{\partial T_{ii}\partial T_{jj}}\parenth{\frac{c(S+T)}{c(S)}}\bigg|_{T=0}
=\frac{1}{c(S)} \frac{\partial c(S)}{\partial s_i\partial s_j},
\end{align*}
which yields \eqref{eqn:EQiijj}.
Next, for $i\neq j$ consider
\begin{align*}
\mathrm{E}[Q_{ij}Q_{ij}]=\frac{\partial^2}{\partial T_{ij}^2}\parenth{\frac{c(S+T)}{c(S)}}\bigg|_{T=0}.
\end{align*}
Let the singular values of $S+T$ be $(\tilde s_1, \tilde s_2, \tilde s_3)$.
The above is expanded into
\begin{align*}
    \mathrm{E}[Q_{ij}Q_{ij}] & = \frac{1}{c(S)} \sum_{k,l=1}^3 \frac{\partial c(S+T)}{\partial \tilde s_k \partial \tilde s_l} \deriv{\tilde s_k}{T_{ij}}\deriv{\tilde s_l}{T_{ij}}\bigg|_{T=0} \\
                             & \quad + \frac{1}{c(S)}\sum_{k=1}^3 \deriv{c(S+T)}{\tilde s_k} \frac{\partial^2 \tilde s_k}{\partial \tilde T_{ij}^2}\bigg|_{T=0}.
\end{align*}
From~\cite{papadopoulo2000estimatinga}, the derivatives of the singular value are given by
\begin{align*}
    \deriv{\tilde s_k}{T_{ij}}\bigg|_{T=0} & = 0,\\
    \frac{\partial^2 \tilde s_k}{\partial T_{ij}^2}\bigg|_{T=0} & = \delta_{j,k} \frac{s_j}{s_j^2-s_i^2} - \delta_{i,k} \frac{s_i}{s_j^2 - s_i^2}.
\end{align*}
Substituting these yields \eqref{eqn:EQijij}. 
Equation \eqref{eqn:EQijji} can be shown similarly. 
\end{proof}

While the above theorem is developed for $Q$, it can be used to find the moments of $R$ as follows. 

\begin{lem}\label{lem:ER}
Suppose $R\sim\mathcal{M}(F)$ for a matrix parameter $F\in\Re^{3\times 3}$. Let the proper singular value decomposition of $F$ be given by \refeqn{USVp}. Then, the first moment of $R$ is given by
\begin{align}
\mathrm{E}[R]=U\mathrm{E}[Q]V^T,\label{eqn:ER}
\end{align}
where the diagonal matrix $\mathrm{E}[Q]$ is constructed by using \refeqn{M1} from $S$. Furthermore, \refeqn{ER} represents the proper singular value decomposition of $\mathrm{E}[R]$.
\end{lem}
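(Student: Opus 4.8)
The plan is to reduce to the diagonal case already treated in Theorem \ref{thm:Q}, and then to read off the ordering of the resulting diagonal moments from the derivative estimate in Theorem \ref{thm:C}. First I would invoke Lemma \ref{lem:Q}: since $R\sim\mathcal{M}(F)$ and $F=USV^T$ is the proper singular value decomposition, the transformed variable $Q=U^TRV$ satisfies $Q\sim\mathcal{M}(S)$, so that $R=UQV^T$. As $U$ and $V$ are deterministic and the expectation operator is linear, I would pull them outside the expectation to obtain $\mathrm{E}[R]=U\,\mathrm{E}[Q]\,V^T$, which is \refeqn{ER}. Theorem \ref{thm:Q}(i) then shows that the off-diagonal entries of $\mathrm{E}[Q]$ vanish and that its diagonal entries are $\mathrm{E}[Q_{ii}]=\frac{1}{c(S)}\deriv{c(S)}{s_i}$, so $\mathrm{E}[Q]$ is exactly the diagonal matrix assembled from \refeqn{M1}.

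For the second assertion I would check that the factorization $\mathrm{E}[R]=U\,\mathrm{E}[Q]\,V^T$ meets the requirements of a proper singular value decomposition in Definition \ref{def:PSVD}: the outer factors must lie in \SO, the middle factor must be diagonal, and its entries must satisfy the ordering \refeqn{s123}. The first two properties are inherited directly, since $U,V\in\SO$ come from the proper decomposition of $F$ and $\mathrm{E}[Q]$ was just shown to be diagonal. For the ordering, write $d_i=\mathrm{E}[Q_{ii}]$ and divide the derivative inequality \refeqn{dcS12} by the strictly positive constant $c(S)$, whose positivity follows from the lower bound in Theorem \ref{thm:C}(iii); this yields $0\leq|d_3|\leq d_2\leq d_1$, which is precisely \refeqn{s123} (and \refeqn{s12} follows from it). Hence $U\,\mathrm{E}[Q]\,V^T$ is a proper singular value decomposition of $\mathrm{E}[R]$.

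The essential content is thus already contained in the earlier results, and the lemma is nearly a corollary; the one nontrivial input is the ordering of the diagonal moments, which is exactly \refeqn{dcS12}. I expect no genuine obstacle, the only point requiring care being that the sign structure of a proper decomposition --- non-negative $d_1,d_2$ but possibly negative $d_3$ --- is preserved, which is guaranteed by the absolute value appearing in \refeqn{dcS12}. Had \refeqn{dcS12} not been available, the real work would have shifted to establishing that ordering directly from the integral representations \refeqn{dcS} of the first derivatives of $c(S)$, by comparing the integrands using the monotonicity and sign properties of $I_0$ and $I_1$ recorded in \refeqn{pI0} and \refeqn{pI1}.
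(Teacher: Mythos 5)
Your proposal is correct and follows essentially the same route as the paper: reduce to the diagonal case via Lemma \ref{lem:Q}, pull the deterministic factors $U,V$ out of the expectation, and use the derivative inequality \refeqn{dcS12} (with positivity of $c(S)$) to confirm that the diagonal entries of $\mathrm{E}[Q]$ satisfy the ordering required of proper singular values. Your write-up is in fact slightly more explicit than the paper's, which simply cites \refeqn{dcSij} and \refeqn{dcS12} without spelling out the division by $c(S)$ or noting that \refeqn{s12} follows from \refeqn{s123}.
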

\begin{proof}
According to Lemma \ref{lem:Q}, $U^T RV\sim\mathcal{M}(S)$. Therefore, $\mathrm{E}[Q]=\mathrm{E}[U^T RV]=U^T \mathrm{E}[R]V$, which yields \refeqn{ER}. Due to \refeqn{dcSij} and \refeqn{dcS12}, the diagonal elements of $\mathrm{E}[Q]$ serve as proper singular values, and $U,V\in\SO$. Therefore, \refeqn{ER} represents a proper singular value decomposition.
\end{proof}

As seen in \refeqn{ER}, the arithmetic mean of $R$ is defined by 9 parameters, and it does not necessarily belong to $\SO$. Consequently, we need to formulate the three-dimensional \textit{mean attitude} in the context of $\SO$. To avoid confusion, the arithmetic mean $\mathrm{E}[R]$ is referred to as the \textit{first moment} throughout this paper. Two types of the mean attitude are defined as follows. 
\begin{definition}
Let the probability density of $R\in\SO$ be $p(R):\SO\rightarrow\Re$. The max mean is defined as
\begin{align}
\mathrm{M}_{\max} [R]=\argmax_{R\in\SO} \{p(R)\},\label{eqn:Mmax}
\end{align}
and the minimum mean square error (MMSE) mean is defined as
\begin{align}
\mathrm{M}_{\mathrm{mse}} [R]=\argmin_{R\in\SO} \braces{\int_{\SO} \|R-\tilde R\|_F^2 \,p(\tilde R) d\tilde R}.\label{eqn:Mmse}
\end{align}
\end{definition}

For the matrix Fisher distribution, the above mean attitudes are equivalent and they can be obtained explicitly as follows.
\begin{theorem}\label{thm:MAtt}
Suppose $R\sim\mathcal{M}(F)$ with a matrix $F\in\Re^{3\times 3}$ with the proper singular value decomposition \refeqn{USVp}. The max mean and the minimum mean square mean of $R$ are identical and they are given by
\begin{align}
\mathrm{M}_{\max} [R]=\mathrm{M}_{\mathrm{mse}} [R]=UV^T\in\SO.\label{eqn:MAtt}
\end{align}
\end{theorem}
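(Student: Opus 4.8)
The plan is to treat the two mean attitudes separately and reduce both to the same optimization problem over \SO.

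For the max mean, since $c(F)$ is a positive constant and $\exp$ is strictly increasing, maximizing $p(R)$ is equivalent to maximizing $\trs{F^T R}$ over \SO. Substituting the proper singular value decomposition $F=USV^T$ and using $U,V\in\SO$ together with the cyclic property of the trace, I would write $\trs{F^TR}=\trs{S\,U^TRV}=\trs{S\tilde Q}$ with $\tilde Q=U^TRV\in\SO$, so that the candidate $R=UV^T$ corresponds to $\tilde Q=I$. The task then becomes showing that $\trs{S\tilde Q}=\sum_i s_i\tilde Q_{ii}$ is maximized over $\tilde Q\in\SO$ at $\tilde Q=I$.

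This maximization is the crux, and the main obstacle is that the last proper singular value $s_3$ may be negative, so the elementwise bound $\tilde Q_{ii}\le 1$ does not by itself force $\tilde Q=I$. To get around this I would parametrize $\tilde Q\in\SO$ by a unit quaternion $(q_0,q_1,q_2,q_3)$ and use the identity $1-\tilde Q_{ii}=2(q_j^2+q_k^2)$ for $(i,j,k)\in\mathcal{I}$, which gives
\[
\trs{S}-\trs{S\tilde Q}=2\bracket{(s_2+s_3)q_1^2+(s_1+s_3)q_2^2+(s_1+s_2)q_3^2}.
\]
By \refeqn{s12} each of $s_2+s_3$, $s_1+s_3$, and $s_1+s_2$ is nonnegative, so the right-hand side is nonnegative and vanishes at $\tilde Q=I$ (i.e.\ $q_1=q_2=q_3=0$). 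Hence $\trs{S\tilde Q}\le\trs{S}$ with the maximum attained at $\tilde Q=I$, which yields $\mathrm{M}_{\max}[R]=UV^T$; when the three coefficients are strictly positive this maximizer is unique. Note this is essentially the intrinsic resolution of the orthogonal Procrustes problem on \SO, where the descending-order constraint on the proper singular values is exactly what rules out the improper solution.

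For the MMSE mean, I would first expand $\norm{R-\tilde R}_F^2=\norm{R}_F^2-2\trs{R^T\tilde R}+\norm{\tilde R}_F^2=6-2\trs{R^T\tilde R}$, using $\norm{R}_F^2=\trs{R^TR}=3$ on \SO. Integrating against $p(\tilde R)$ turns the cost in \refeqn{Mmse} into $6-2\trs{R^T\mathrm{E}[R]}$, so the MMSE mean maximizes $\trs{\mathrm{E}[R]^TR}$ over \SO. By Lemma \ref{lem:ER}, $\mathrm{E}[R]=U\mathrm{E}[Q]V^T$ is a proper singular value decomposition whose proper singular values inherit the ordering and sign constraints \refeqn{s123}, \refeqn{s12} from \refeqn{dcS12}, \refeqn{dcSij}. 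Therefore the identical argument of the previous paragraph, applied with $\mathrm{E}[R]$ in place of $F$, again produces the maximizer $UV^T$. This shows the two means coincide with $UV^T$, establishing \refeqn{MAtt}.
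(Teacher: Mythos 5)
Your proposal is correct and follows essentially the same route as the paper: both parts reduce to maximizing $\trs{S\tilde Q}$ over $\tilde Q\in\SO$ (with the MMSE case handled through the Frobenius-norm expansion and Lemma \ref{lem:ER}, exactly as in the paper's proof). The only difference is that you establish the key inequality $\trs{S\tilde Q}\le\trs{S}$ via the unit-quaternion identity $1-\tilde Q_{ii}=2(q_j^2+q_k^2)$, whereas the paper uses the axis-angle parameterization in \refeqn{trSQ}; the two computations are equivalent, since $2q_k^2=(1-\cos\theta)a_k^2$.
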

\begin{proof}
As discussed in \refeqn{trSQ} of Appendix \ref{sec:PfC}, the probability density of a matrix Fisher distribution is maximized when $U^T RV=I_{3\times 3}$, or $R=UV^T$, which is the max mean attitude. 

Next, from the definition of the Frobenius norm,
\begin{align*}
\|R-\tilde R\|_F^2 = \trs{(R-\tilde R)^T (R-\tilde R)} = \trs{2(I_{3\times 3}-R^T\tilde R)}.
\end{align*}
Substituting this into \refeqn{Mmse}, and using $\int_\SO p(\tilde R)d\tilde R=1$ and $\int_\SO \tilde R p(\tilde R)d\tilde R=\mathrm{E}[R]$,
\begin{align*}
\mathrm{M}_{\mathrm{mse}} [R]=\argmin_{R\in\SO} \braces{6-2\trs{R^T \mathrm{E}[R]}}.
\end{align*}
Substituting \refeqn{ER}, it is straightforward to show $\mathrm{M}_{\mathrm{mse}}$ is the value of $R$ that maximizes $\trs{R^T \mathrm{E}[R]}=\trs{R^T U\mathrm{E}[Q] V^T}=\trs{\mathrm{E}[Q] (V^TR^T U)}$. As discussed in Lemma \ref{lem:ER}, the diagonal elements of $\mathrm{E}[Q]$ serve as the proper singular values due to \refeqn{dcSij}. Therefore, similar with \refeqn{trSQ}, the above expression is maximized when $V^TR^T U=I_{3\times 3}$, or equivalently $R=UV^T$.
\end{proof}
	
\subsection{Cumulative Distribution for $\mathcal{M}(sI_{3\times 3})$}

From Theorem \ref{thm:Q}, the proper singular values of $F$ determines the dispersion or  concentration of the matrix Fisher distribution. In this subsection, we analyze the degree of dispersion of the matrix Fisher distribution quantitatively for the simplified cases when the matrix parameter $F$ is directly proportional to the identity matrix, i.e., $R\sim\mathcal{M}(sI_{3\times 3})$ for $s\geq 0$. 

\begin{theorem}
Suppose $R\sim\mathcal{M}(sI_{3\times 3})$ for $s\geq 0$. The probability density for $R$ is given by
\begin{align}
p(R) = \frac{1}{I_0(2s)-I_1(2s)} \exp (s\,(\trs{R}-1)),\label{eqn:MFs}
\end{align}
relative to the uniform distribution on $\SO$. For $\theta\in[0,\pi]$, define $E_\theta\in\SO$ be
\begin{align}
E_\theta = \{R\in\SO \,|\, \|(\log(R))^\vee\|\leq \theta\},
\end{align}
which is the set of the rotation matrices that can be obtained from a fixed-axis rotation of $I_{3\times 3}$ with a rotation angle less than or equal to $\theta$. The probability that $R$ belongs to $E_\theta$ is given by
\begin{align}
\mathrm{Prob}&[R\in E_\theta]\nonumber\\=
&\frac{1}{\pi(I_0(2s)-I_1(2s))} \int_{0}^\theta \exp(2s\cos\rho) (1-\cos\rho)   d\rho.\label{eqn:PrEtheta}
\end{align}
\end{theorem}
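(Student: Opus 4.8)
The plan is to exploit the fact that for $F=sI_{3\times3}$ the exponent $\trs{F^T R}=s\,\trs{R}$ is a class function, so the density in \refeqn{cF} is constant on conjugacy classes and therefore depends on $R$ only through its rotation angle. Writing $R$ in axis--angle form with angle $\theta\in[0,\pi]$, I would use the two elementary identities $\trs{R}=1+2\cos\theta$ and $\|(\log R)^\vee\|=\theta$; the latter shows that $E_\theta$ is precisely the set of rotations whose angle does not exceed $\theta$. Both assertions of the theorem then reduce to integrating a function of $\theta$ against the distribution of the rotation angle.

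The single nontrivial input I would invoke is the distribution of the rotation angle under the normalized Haar measure: for $R$ uniform on $\SO$, the angle $\theta$ has density $\tfrac{1}{\pi}(1-\cos\theta)$ on $[0,\pi]$ (the Weyl integration formula for $\SO$, equivalently the Jacobian of the exponential parameterization against $dR$, which may also be extracted from the Euler-angle measure recorded above). Granting this, the normalizing constant follows directly:
\begin{align*}
c(sI_{3\times3})=\int_{\SO}\exp(s\,\trs{R})\,dR=\frac{e^s}{\pi}\int_0^\pi \exp(2s\cos\theta)(1-\cos\theta)\,d\theta,
\end{align*}
and the two pieces of this integral are exactly $\pi I_0(2s)$ and $\pi I_1(2s)$ by the defining formulas \refeqn{I0} and \refeqn{I1}. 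Hence $c(sI_{3\times3})=e^s(I_0(2s)-I_1(2s))$, and dividing $\exp(s\,\trs{R})$ by this constant while pulling out $e^{-s}$ gives \refeqn{MFs}. (Specializing the general formula \refeqn{cS} to $s_1=s_2=s_3=s$ would also yield $c$, but only as a single Bessel integral that is not as cleanly reduced to closed form, so the class-function route is preferable.)

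For the cumulative probability I would integrate the density \refeqn{MFs}, written as $\exp(2s\cos\rho)/(I_0(2s)-I_1(2s))$ in terms of the angle $\rho$, against the same Haar angle density $\tfrac{1}{\pi}(1-\cos\rho)\,d\rho$, but now only over $\rho\in[0,\theta]$ since $E_\theta$ consists of the rotations with angle at most $\theta$. This yields
\begin{align*}
\mathrm{Prob}[R\in E_\theta]=\frac{1}{\pi(I_0(2s)-I_1(2s))}\int_0^\theta \exp(2s\cos\rho)(1-\cos\rho)\,d\rho,
\end{align*}
which is \refeqn{PrEtheta}.

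The main obstacle is the one ingredient that is not routine: rigorously establishing the Haar-measure density $\tfrac{1}{\pi}(1-\cos\theta)$ of the rotation angle, together with the normalization $\int_0^\pi \tfrac{1}{\pi}(1-\cos\theta)\,d\theta=1$. Everything after that is bookkeeping: identifying $\|(\log R)^\vee\|$ with the principal angle (well defined off the measure-zero set where $\theta=\pi$), and recognizing the two resulting $\theta$-integrals as $I_0$ and $I_1$ through \refeqn{I0}--\refeqn{I1}. A useful consistency check is the limit $s\to0$, where $c\to1$ and $\mathrm{Prob}[R\in E_\theta]\to\tfrac1\pi\int_0^\theta(1-\cos\rho)\,d\rho$, recovering the uniform-distribution answer.
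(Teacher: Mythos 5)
Your proposal is correct and follows essentially the same route as the paper: both arguments exploit that $\trs{sR}$ depends only on the rotation angle, reduce everything to the one-dimensional angle marginal $\tfrac{1}{\pi}(1-\cos\rho)\,d\rho$, and identify the resulting integrals with $I_0(2s)$ and $I_1(2s)$ via \refeqn{I0}--\refeqn{I1}. The only cosmetic difference is how that angle density is justified — you invoke the Weyl integration formula directly, while the paper derives it by integrating the exponential-coordinate Haar measure $dR(\eta)=\tfrac{1-\cos\|\eta\|}{4\pi^2\|\eta\|^2}\,d\eta$ in spherical coordinates and fixes $c(sI_{3\times 3})$ from $\mathrm{Prob}[R\in E_\pi]=1$ — but these are the same ingredient and the same computation.
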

\begin{proof}
The rotation matrix is parameterized by the exponential map as
\begin{align*}
R(\eta)=\exp(\hat\eta)=I_{3\times 3}+\sin\|\eta\|\frac{\hat\eta}{\|\eta\|} +(1-\cos\|\eta\|)\frac{\hat\eta^2}{\|\eta\|^2},
\end{align*}
where $\eta\in\Re^3$. This can be interpreted as the rotation about the axis $\frac{\eta}{\|\eta\|}$ by the angle $\|\eta\|$. Since $\trs{R(\eta)}=1+2\cos\|\eta\|$, and $dR(\eta)= \frac{1-\cos\|\eta\|}{4\pi^2 \|\eta\|^2}$~\cite{ChiKya01}, the probability that $R$ belongs to $E_\theta$ is
\begin{align*}
&\mathrm{Prob}[R\in E_\theta]=\int_{R\in E_\theta}p(R(\eta))dR(\eta)\\
& = \frac{1}{c(sI_{3\times 3})}\int_{\|\eta\|\leq \theta}\exp(s(1+2\cos\|\eta\|)) \frac{1-\cos\|\eta\|}{4\pi^2 \|\eta\|^2}d\eta.
\end{align*}
To evaluate the integral, we parameterize $\eta$ via
\begin{align*}
\eta = [\rho\sin\phi \cos\psi,\, \rho \sin\phi\sin\psi,\, \rho\cos\phi]^T,
\end{align*}
where $\rho\in[0,\theta]$, $\phi\in[0,\pi]$, and $\psi\in[0,2\pi]$ are the radius, the co-latitude, and the longitude of $\eta$, respectively. Substituting this with $d\eta=\rho^2 \sin\phi\, d\rho d\psi d\phi$, 
\begin{align}
\mathrm{Prob}[R\in E_\theta] =\frac{e^s}{\pi c(sI_{3\times3})} \int_{0}^\theta \exp(2s\cos\rho) (1-\cos\rho)   d\rho.
\label{eqn:PrEtheta0}
\end{align}
Since $\mathrm{Prob}[R\in E_\pi]=\mathrm{Prob}[R\in \SO]=1$, the normalizing constant is given by
\begin{align*}
c(sI_{3\times 3}) & =  \frac{e^s}{\pi}\int_0^\pi \exp(2s\cos\rho) (1-\cos\rho)d\rho.
\end{align*}
From \refeqn{I0} and \refeqn{I1}, it reduces to
\begin{align}\label{eqn:csI}
c(sI_{3\times 3}) %& = e^s \frac{1}{\pi}\int_0^\pi \exp(2s\cos\rho) (1-\cos\rho)d\rho\\
& = e^s (I_0(2s)-I_1(2s)).
\end{align}
Substituting this into \refeqn{MF} with $F=sI_{3\times 3}$ yield \refeqn{MFs}, and \refeqn{PrEtheta} is obtained from \refeqn{PrEtheta0}.
\end{proof}
\begin{remark}
%The above theorem is developed for a matrix Fisher distribution with the matrix parameter $sI_{3\times 3}$.
Suppose $R\sim\mathcal{M}(sM)$ for $M\in\SO$ and $s\geq 0$, i.e., the three singular values of the matrix parameter $F$ are identical, and the max (or MMSE) mean attitude is $M$. According to Lemma \ref{lem:Q}, $M^T R,RM^T\sim\mathcal{M}(s I_{3\times 3})$. Therefore, the probability that the angle between $R$ and $M$ is less than $\theta$ for $\theta\in[0,\pi]$ is also given by \refeqn{PrEtheta}. 
\end{remark}

\begin{figure}
\centerline{
	\subfigure[$\mathrm{Prob}{[} R\in E_\theta {]}$ for varying $s$]{\includegraphics[width=0.8\columnwidth]{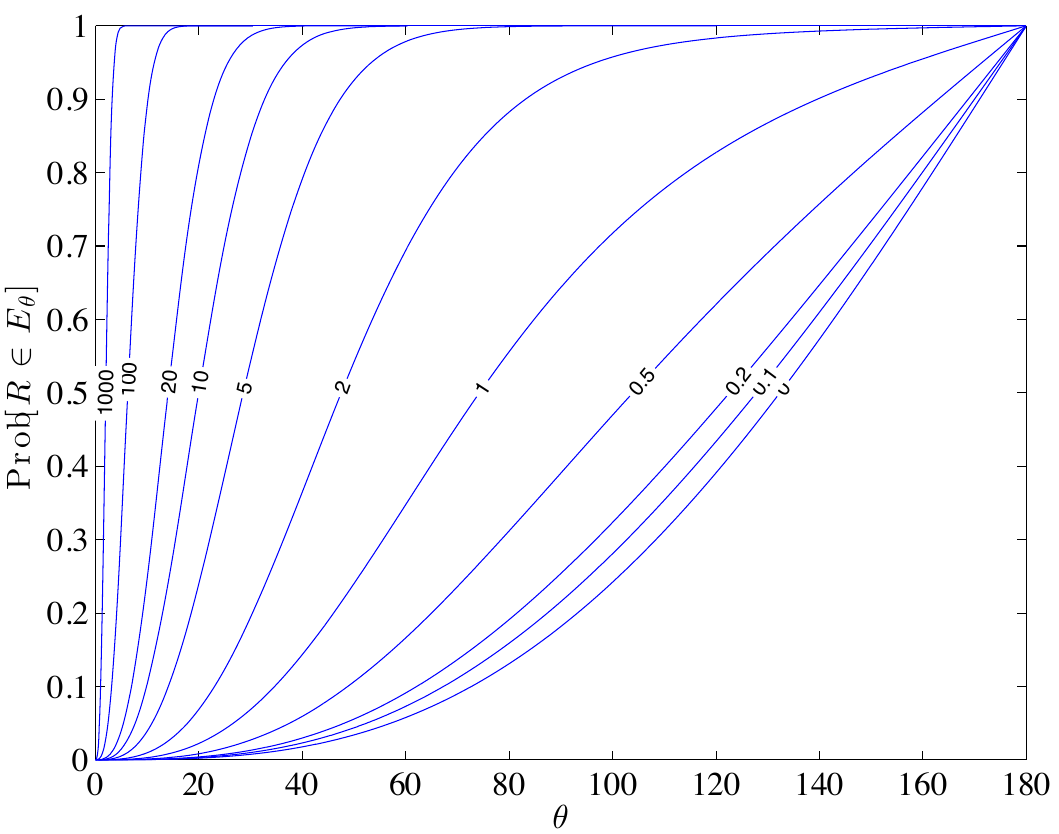}\label{fig:MFsyma}}
	}
\centerline{
	\subfigure[Contour plot of $\mathrm{Prob}{[} R\in E_\theta {]}$]{\includegraphics[width=0.8\columnwidth]{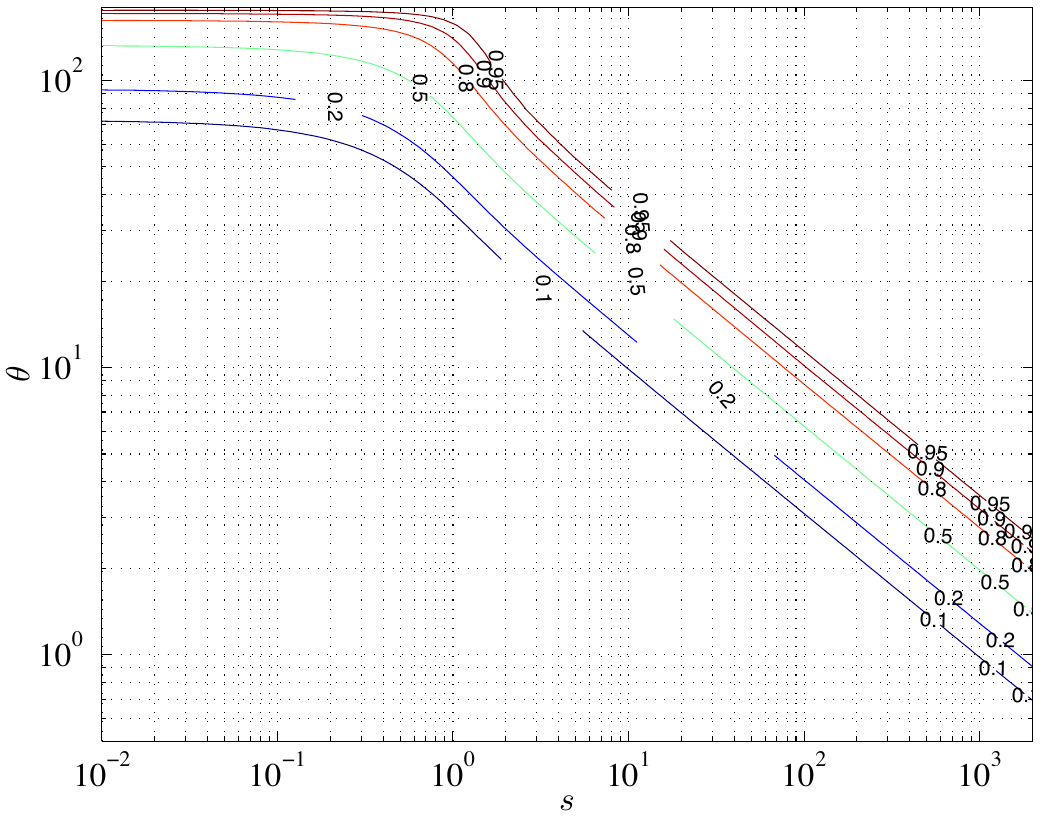}\label{fig:MFsymb}}
	}
\caption{Cumulative distribution for $\mathcal{M}(sI_{3\times 3})$}\label{fig:MFsym}	
\end{figure}	

Since $E_0=\emptyset$ and $E_\pi=\SO$, the probability $\mathrm{Prob}[R\in E_\theta]$ strictly increases from $0$ to $1$ as $\theta$ is varied form $0$ to $\pi$, and it can be considered as the cumulative distribution function for $\mathcal{M}(sI_{3\times 3})$. This is presented in Fig. \ref{fig:MFsyma} for varying $s$. The matrix Fisher distribution is more concentrated to the mean attitude as $s$ increases, and it becomes a uniform distribution on $\SO$ when $s=0$.  By evaluating \refeqn{PrEtheta} repeatedly, one may determine the degree of dispersion quantitatively from $s$, as illustrated in Fig. \ref{fig:MFsymb}. For example, when $R\sim \mathcal{M}(100 I_{3\times 3})$, the angle between $R$ and $I_{3\times 3}$ is less than $10^\circ$ with the probability of $0.9$.

		% simplification: S=sI
		% accumulative distribution
\subsection{Visualization}

While the preceding analysis for the mean attitude, moments, and cumulative distribution characterize the stochastic properties of the matrix Fisher distribution quantitatively, it is still desirable to visualize it for direct, intuitive understanding. However, it is challenging to visualize the probability density \refeqn{MF}, as its domain, $\SO$ is three-dimensional.

A method to visualize a probability density function on $\SO$ has been presented in~\cite{LeeLeoPICDC08}. It is based on the fact that the $i$-th column of the rotation matrix, namely $Re_i$ corresponds to the direction of the $i$-th body-fixed frame resolved in the inertial frame, and it has the unit-length, i.e., $Re_i\in\Sph^2$ for any $i\in\{1,2,3\}$. Therefore, one can find the marginal density for each $Re_i$, and visualize it on the surface of the unit-sphere via color-shading. 

The computation of the marginal density for each column of a rotation matrix would require a double-integration in general. Here, we show that the marginal density of the matrix Fisher distribution can be obtained directly as follows.
\begin{theorem}
Suppose $R\sim\mathcal{M}(F)$ for a matrix parameter $F\in\Re^{3\times 3}$. For $(i,j,k)\in\mathcal{I}$, let $r_i\in\Sph^2$ and $f_i\in\Re^3$ be the $i$-th column of $R$ and $F$, respectively, i.e., $r_i=Re_i$, $f_i=Fe_i$. Also, let $f_{j,k}\in\Re^{3\times 2}$ be composed of the $j$-th column and the $k$-th column of $F$, i.e., $f_{j,k}=[f_j, f_k]$. The marginal density for $r_i$ is given by
\begin{align}
p_i(r_i) = \frac{\exp (f_i^T r_i)}{c(F)}I_0(s'_1+\mathrm{sgn}[r_i^T(f_j\times f_k)]s'_2),\label{eqn:pri}
\end{align}
relative to the uniform distribution on $\Sph^2$, where for $l\in\{1,2\}$,
\begin{align}
s'_l= \sqrt{\lambda_l\!\bracket{f_{j,k}^T (I_{3\times 3}-r_ir_i^T) f_{j,k}}},\label{eqn:sl}
\end{align}
and $\lambda_l[\cdot]$ denotes the $l$-th eigenvalue. 
\end{theorem}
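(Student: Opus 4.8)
The plan is to compute the marginal by disintegrating the normalized Haar measure on $\SO$ along the fiber map $\pi_i:R\mapsto Re_i=r_i$ onto $\Sph^2$. Since $\pi_i$ is $\SO$-equivariant, it pushes the Haar probability measure forward to the unique $\SO$-invariant (hence uniform) probability measure on $\Sph^2$, and each fiber $\pi_i^{-1}(r_i)$ is a circle, namely a coset of the stabilizer of $e_i$ (rotations about the $i$-th axis), carrying the normalized measure $\tfrac{1}{2\pi}d\phi$. Thus the marginal relative to the uniform distribution on $\Sph^2$ is
\begin{align*}
p_i(r_i)=\frac{1}{2\pi}\int_0^{2\pi} p(R)\,d\phi
=\frac{1}{2\pi c(F)}\int_0^{2\pi}\exp(\trs{F^TR})\,d\phi.
\end{align*}
Writing $\trs{F^TR}=f_i^Tr_i+f_j^Tr_j+f_k^Tr_k$ and noting that $f_i^Tr_i$ is constant along the fiber, I would factor out $\exp(f_i^Tr_i)$, leaving a one-dimensional average over the remaining columns $r_j,r_k$.

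First I would parameterize the fiber. Fix an orthonormal pair $u,v\in\Re^3$ spanning the plane $r_i^\perp$ with $u\times v=r_i$; then for $(i,j,k)\in\mathcal I$ the constraint $R\in\SO$ (equivalently $r_i\times r_j=r_k$) forces
\begin{align*}
r_j=\cos\phi\,u+\sin\phi\,v,\qquad r_k=-\sin\phi\,u+\cos\phi\,v,\qquad \phi\in[0,2\pi).
\end{align*}
Substituting, the fiber exponent becomes $f_j^Tr_j+f_k^Tr_k=A\cos\phi+B\sin\phi$ with $A=f_j^Tu+f_k^Tv$ and $B=f_j^Tv-f_k^Tu$. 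Rewriting $A\cos\phi+B\sin\phi=\sqrt{A^2+B^2}\cos(\phi-\phi_0)$ and using periodicity with the definition \refeqn{I0} of $I_0$, the circle average collapses to $\frac{1}{2\pi}\int_0^{2\pi}\exp(A\cos\phi+B\sin\phi)\,d\phi=I_0(\sqrt{A^2+B^2})$. This already yields $p_i(r_i)=\frac{\exp(f_i^Tr_i)}{c(F)}I_0(\sqrt{A^2+B^2})$, so it remains to identify $\sqrt{A^2+B^2}$ with $s'_1+\mathrm{sgn}[r_i^T(f_j\times f_k)]s'_2$.

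The main obstacle is exactly this last identification, which is where the eigenvalues and the sign enter. Let $P=I_{3\times3}-r_ir_i^T$ be the orthogonal projection onto $r_i^\perp$ and $G=f_{j,k}^TPf_{j,k}\in\Re^{2\times2}$, whose eigenvalues are $\lambda_1=(s'_1)^2\geq\lambda_2=(s'_2)^2\geq0$. Expanding and grouping the squares, and using $Pf_j=(f_j^Tu)u+(f_j^Tv)v$ together with the analogous identity for $Pf_k$ and $u\times v=r_i$, I expect
\begin{align*}
A^2+B^2=\norm{Pf_j}^2+\norm{Pf_k}^2+2\bigl[(f_j^Tu)(f_k^Tv)-(f_j^Tv)(f_k^Tu)\bigr]
=\norm{Pf_j}^2+\norm{Pf_k}^2+2\,r_i^T(f_j\times f_k),
\end{align*}
where the antisymmetric combination equals $r_i^T(Pf_j\times Pf_k)=r_i^T(f_j\times f_k)$ because the $r_i$-components of $f_j,f_k$ drop out of the $r_i$-component of the cross product.

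On the eigenvalue side, $\norm{Pf_j}^2+\norm{Pf_k}^2=\trs{G}=\lambda_1+\lambda_2$, while the Lagrange identity gives $\det G=\norm{Pf_j\times Pf_k}^2=(r_i^T(f_j\times f_k))^2=\lambda_1\lambda_2$, since $Pf_j\times Pf_k$ is parallel to $r_i$. Hence $(s'_1+\epsilon s'_2)^2=\lambda_1+\lambda_2+2\epsilon\sqrt{\lambda_1\lambda_2}$ matches $A^2+B^2$ precisely when $\epsilon=\mathrm{sgn}[r_i^T(f_j\times f_k)]$; and because $s'_1\geq s'_2\geq0$ the quantity $s'_1+\epsilon s'_2$ is nonnegative, so $\sqrt{A^2+B^2}=s'_1+\mathrm{sgn}[r_i^T(f_j\times f_k)]s'_2$, which substituted above gives \refeqn{pri}. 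The care points I would watch are the orientation convention tying $(i,j,k)\in\mathcal I$ to $u\times v=r_i$, and the sign bookkeeping in the cross-product/determinant step, which is exactly what produces the $\mathrm{sgn}$ factor distinguishing this result from a naive $I_0(s'_1+s'_2)$.
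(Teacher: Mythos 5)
Your proof is correct, and while it shares the paper's overall strategy---marginalizing by integrating the density over the circle of rotations whose $i$-th column equals $r_i$, factoring out $\exp(f_i^Tr_i)$, and recognizing the fiber integral as a modified Bessel function---the key identification step is carried out by a genuinely different route. The paper parameterizes the fiber by $Z\in\mathsf{SO(2)}$, reduces the fiber integral to the normalizing constant of a matrix Fisher distribution on $\mathsf{SO(2)}$ via a proper singular value decomposition of the $2\times 2$ matrix $f_{j,k}^T\bar r_{j,k}$, obtains $I_0(s_1'+\mathrm{det}[U'V']s_2')$, and then needs a separate determinant computation to show $\mathrm{det}[U'V']=\mathrm{sgn}[r_i^T(f_j\times f_k)]$, with the degenerate case $\mathrm{det}[S']=0$ treated on its own. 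You instead parameterize the fiber by a scalar angle, collapse the integral to the classical von Mises form $I_0(\sqrt{A^2+B^2})$, and identify $\sqrt{A^2+B^2}$ with $s_1'+\mathrm{sgn}[r_i^T(f_j\times f_k)]s_2'$ by matching the trace and determinant of the Gram matrix $G=f_{j,k}^T(I_{3\times 3}-r_ir_i^T)f_{j,k}$, using the Lagrange identity $\mathrm{det}[G]=\norm{Pf_j\times Pf_k}^2=(r_i^T(f_j\times f_k))^2$ together with the observation that $Pf_j\times Pf_k$ is parallel to $r_i$. This buys two things: it avoids the proper SVD machinery entirely, so there are no orthogonal factors whose determinant signs must be tracked, and it handles the degenerate case uniformly, since $r_i^T(f_j\times f_k)=0$ forces $\lambda_2=0$, making the formula valid regardless of the convention for $\mathrm{sgn}[0]$. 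What the paper's route buys in exchange is the explicit reduction to the $\mathsf{SO(2)}$ matrix Fisher normalizing constant, mirroring the structure of Theorem~\ref{thm:C} one dimension down; your computation of $A^2+B^2$ is an equivalent but more elementary substitute for that reduction. Your opening disintegration argument (equivariance of $R\mapsto Re_i$ plus the quotient-integral formula for the stabilizer circle) is also a sound, indeed slightly more explicit, justification of the fiber-averaging step that the paper performs implicitly when it integrates over $\mathsf{SO(2)}$ with normalized Haar measure.
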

\begin{proof}
For simplicity, we show \refeqn{pri} when $(i,j,k)=(1,2,3)$. For given $r_1\in\Sph^2$, define $\bar r_2,\bar r_3\in\Sph^2$ such that they span the orthogonal complement of $r_1$, which is the two-dimensional plane that is normal to $r_1$. Further assume that they are ordered properly such that the determinant of the matrix $[r_1,\bar r_2, \bar r_3 ]\in\Re^{3\times 3}$ becomes one. As such, the matrix $[r_1,\bar r_2, \bar r_3 ]\in\SO$ is one particular rotation matrix whose first column is equal to $r_1$. The set of \textit{all} rotation matrices whose first column is equal to $r_1$ is obtained by rotating $[r_1,\bar r_2, \bar r_3 ]$ about $r_1$ as
\begin{align*}
\{ R\in\SO\,|\, Re_1=r_1\}=\{[r_1,\bar r_{2,3}Z]\in\SO \,|\, Z\in\mathsf{SO(2)}\},
\end{align*}
where $\bar r_{2,3}=[\bar r_2,\bar r_3]\in\Re^{3\times 2}$, and $\mathsf{SO(2)}=\{Z\in\Re^{2\times 2}\,|\, Z^TZ=I_{2\times 2},\,\mathrm{det}[Z]=1\}$.

Therefore, the marginal density for the first column of $R$ is constructed by integrating \refeqn{MF} over $\mathsf{SO(2)}$ as follows.
\begin{align}
p_1(r_1)&= \frac{1}{c(S)}\int_{Z\in\mathsf{SO(2)}} \exp(\trs{F^T [r_1, \bar r_{2,3}Z]})\, dZ\nonumber\\
&=\frac{\exp(f_1^T r_1)}{c(S)}\int_{Z\in\mathsf{SO(2)}} \exp(\trs{f_{2,3}^T \bar r_{2,3}Z}) \, dZ,\label{eqn:pr1}
\end{align}
where $dZ$ is the Haar measure on $\mathsf{SO(2)}$ that is scaled such that $\int_{\mathsf{SO(2)}}dZ=1$.

Next, we evaluate the integral of \refeqn{pr1}. The singular value decomposition of $f_{2,3}^T \bar r_{2,3}\in\Re^{2\times 2}$ is given by
\begin{align}
f_{2,3}^T\bar r_{2,3} & = U' S' (V')^T,\label{eqn:fr}
\end{align}
where $U',V'\in\Re^{2\times 2}$ with $(U')^T U'= (V')^T V'= I_{2\times 2}$, and $S'=\mathrm{diag}[s_1',s_2']$ with $s_1'\geq s_2'\geq 0$.  Similar to \refeqn{USVp}, the proper singular value decomposition is defined as
\begin{align}
f_{2,3}^T\bar r_{2,3} & = U S V^T,\label{eqn:frp}
\end{align}
where $U,V\in\mathsf{SO(2)}$ and a diagonal matrix $S\in\Re^{2\times 2}$ are given by
\begin{align}
U & = U' \mathrm{diag}[1,\,\mathrm{det}[U']],\\
S & = \mathrm{diag}[s_1',\,\mathrm{det}[U'V']s_2'],\\
V & = V' \mathrm{diag}[1,\,\mathrm{det}[V']].
\end{align}

%
%Let $u_i,v_i\in\Re^2$ be the $i$-th column of $U$ and $V$, respectively, and let $s_i$ be the $i$-th singular value. Since $(\mathrm{det}[U])^2=(\mathrm{det}[V])^2 =1$, the above equation can be rewritten as
%\begin{align}
%f_{2,3}^T\bar r_{2,3}
%& = \begin{bmatrix} u_1, u_2 \end{bmatrix} \begin{bmatrix} s_1 & 0 \\ 0 & s_2 \end{bmatrix}
%\begin{bmatrix} v_1, v_2 \end{bmatrix}^T\nonumber\\
%& = \begin{bmatrix} \mathrm{det}[U]u_1, u_2 \end{bmatrix} \begin{bmatrix} \mathrm{det}[UV]s_1 & 0 \\ 0 & s_2 \end{bmatrix}
%\begin{bmatrix} \mathrm{det}[V]v_1, v_2 \end{bmatrix}^T\nonumber\\
%& \triangleq U' S' (V'^T),\label{eqn:frp}
%\end{align}
%where $U',V'\in\mathsf{SO(2)}$, since $\mathrm{det}[U']=\mathrm{det}[V']=1$. 

Using this, we change the integration variable of \refeqn{pr1} from $Z$ to $Y=V^T Z U\in\mathsf{SO(2)}$ to obtain
\begin{align*}
\int_{Z\in\mathsf{SO(2)}} \exp(\trs{f_{2,3}^T \bar r_{2,3}Z}) \, dZ=\int_{Y\in\mathsf{SO(2)}} \exp(\trs{SY}) \, dY,
\end{align*}
which is equivalent to the normalizing constant for the matrix Fisher distribution defined on $\mathsf{SO(2)}$. To evaluate it, $\mathsf{SO(2)}$ is parameterized with $\theta\in[0,2\pi)$ via
\begin{align*}
Y(\theta)=\begin{bmatrix} \cos\theta & -\sin\theta \\ \sin\theta & \cos\theta\end{bmatrix}.
\end{align*}
Then, $dY(\theta) = \frac{1}{2\pi} d\theta$, and $\trs{SY(\theta)}= \trs{S} \cos\theta$. Therefore,
\begin{align*}
\int_{Z\in\mathsf{SO(2)}} \exp(\trs{SY}) \, dZ=\frac{1}{2\pi}\int_0^{2\pi} \exp(\trs{S}\cos\theta)\,d\theta.
\end{align*}
From \refeqn{I0} and \refeqn{frp}, this is equivalent to
\begin{align*}
I_0(\trs{S}) = I_0 ( s_1' + \mathrm{det}[U'V']s_2').
\end{align*}
Substituting these into \refeqn{pr1},
\begin{align}
p_1(r_1)
&=\frac{\exp(f_1^T r_1)}{c(S)}I_0(s_1' + \mathrm{det}[U'V']s_2').\label{eqn:pr1_2}
\end{align}

Finally, we find an alternative expression of $\mathrm{det}[U'V']$ in terms of $r_1$ and $F$. First, we consider the case when $\mathrm{det}[S'] >0$. Since $\mathrm{det}[S'] >0$ and $\mathrm{det}[U'V']=\pm 1$,
\begin{align*}
\mathrm{det}[U'V'] & = \mathrm{sgn}[\mathrm{det}[U'V']]=
\mathrm{sgn}[\mathrm{det}[S']\mathrm{det}[U'V']]\\
&=\mathrm{sgn}[\mathrm{det}[f_{2,3}^T \bar r_{2,3}]],
\end{align*}
where the last equality is from \refeqn{fr}. Since $r_1^T \bar r_{2,3}=0_{1\times 2}$,
\begin{align*}
f_{2,3}^T \bar r_{2,3} & = f_{2,3}^T (I_{3\times 3} -r_1r_1^T)\bar r_{2,3}=((I_{3\times 3} -r_1r_1^T)f_{2,3})^T \bar r_{2,3}\\ & \triangleq [\bar f_{2}, \bar f_3]^T \bar r_{2,3},
\end{align*}
where $\bar f_2,\bar f_3\in\Re^3$ coincide with the orthogonal projection of $f_2$ and $f_3$ to the plane normal to $r_1$, respectively, i.e., $\bar f_2^T r_1 = \bar f_3^T r_1=0$, which implies
\begin{align*}
[r_1, \bar f_2, \bar f_3]^T [r_1, \bar r_2, \bar r_3] =  \mathrm{diag}\{1, \bar f_{2,3}^T \bar r_{2,3}\}.
\end{align*}
Therefore,
\begin{align*}
\mathrm{det}[\bar f_{2,3}^T \bar r_{2,3}]&=\mathrm{det}{[r_1, \bar f_2, \bar f_3]}\mathrm{det}{[r_1, \bar r_2, \bar r_3]}\\
& = \mathrm{det}{[r_1, \bar f_2, \bar f_3]}=r_1^T (\bar f_2\times \bar f_3),
\end{align*}
as $\mathrm{det}[r_1, \bar r_2, \bar r_3]=1$ by the construction of $\bar r_2,\bar r_3$. Substituting the definition of $\bar f_2$, and $\bar f_3$, namely  $\bar f_2 = (I_{3\times 3}-r_1r_1^T)f_2$ and $\bar f_3 = (I_{3\times 3}-r_1r_1^T)f_3$,
\begin{align*}
r_1^T &(\bar f_2\times \bar f_3)=\bar f_2^T (r_1\times \bar f_3)\\
& = f_2^T (I_{3\times 3}-r_1r_1^T) \{r_1\times ((I_{3\times 3}-r_1r_1^T) f_3)\},
\end{align*}
which is simplified into $f_2^T (r_1\times f_3)=r_1^T (f_2\times f_3)$.
%\begin{align*}
%r_1^T (\bar f_2\times \bar f_3)= f_2^T (r_1\times f_3)=r_1^T (f_2\times f_3).
%\end{align*}

Summarizing the results derived after \refeqn{pr1_2}, 
\begin{align*}
\mathrm{det}[U'V'] 
&=\mathrm{sgn}[\mathrm{det}[f_{2,3}^T \bar r_{2,3}]]
=\mathrm{sgn}[\mathrm{det}[\bar f_{2,3}^T \bar r_{2,3}]]\\
& = \mathrm{sgn}[r_1^T (f_2\times f_3)].
\end{align*}
Substituting this into \refeqn{pr1_2} yield \refeqn{pri}  when $\mathrm{det}[S']>0$. On the other hand, when $\mathrm{det}[S']=s_1's_2'=0$, we have $s_2'=0$ as $s_1'\geq s_2' \geq 0$. Therefore,  \refeqn{pr1_2} reduces to $p_1(r_1)=\frac{\exp(f_1^T r_1)}{c(S)}I_0(s_1')$ which is equivalent to \refeqn{pri} as $s_2'=0$.

Next, since $s_1',s'_2$ are the singular values of $f_{2,3}^T \bar r_{2,3}$, they are the square-root of the eigenvalues of  $f_{2,3}^T \bar r_{2,3}(f_{2,3}^T \bar r_{2,3})^T=f_{2,3}^T(I_{3\times 3}-r_1r_1^T) f_{2,3}$, which shows \refeqn{sl}. 

These show \refeqn{pri} and \refeqn{sl} for $(i,j,k)=(1,2,3)$, and the cases for other $(i,j,k)\in\mathcal{I}$ can be shown similarly. 
\end{proof}

\begin{figure}
\centerline{
	\subfigure[$F_a=5I_{3\times 3}$]{
		\includegraphics[width=0.3\columnwidth]{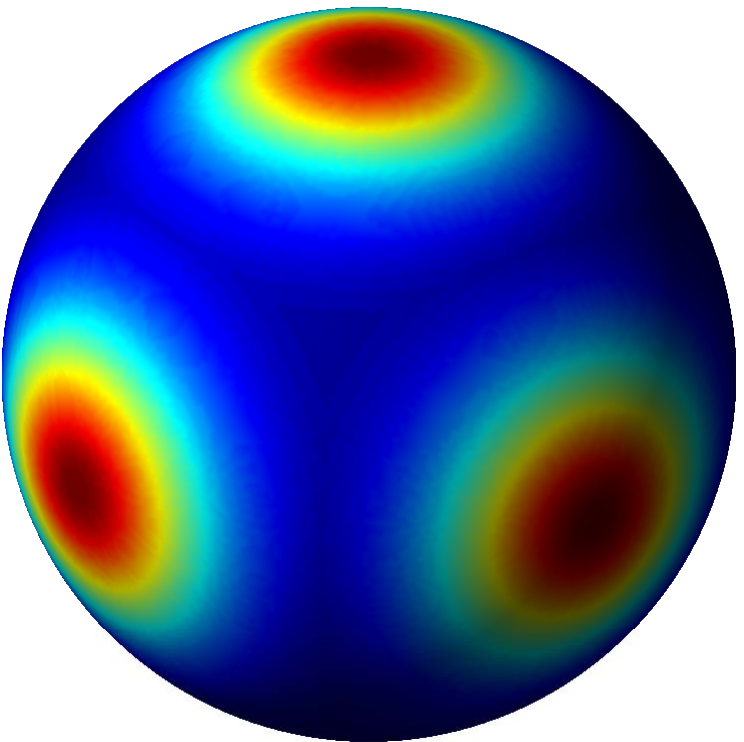}\label{fig:vis1}}
	\hfill
	\subfigure[$F_b=20I_{3\times 3}$]{
		\includegraphics[width=0.3\columnwidth]{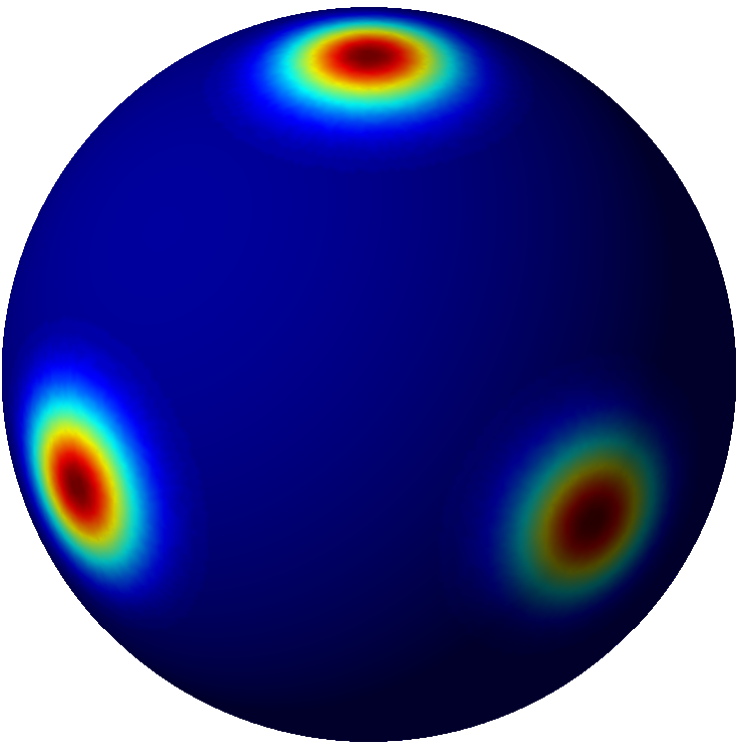}\label{fig:vis2}}
	\hfill
	\subfigure[{$F_c=\mathrm{diag}[25,5,1]$}]{
		\includegraphics[width=0.3\columnwidth]{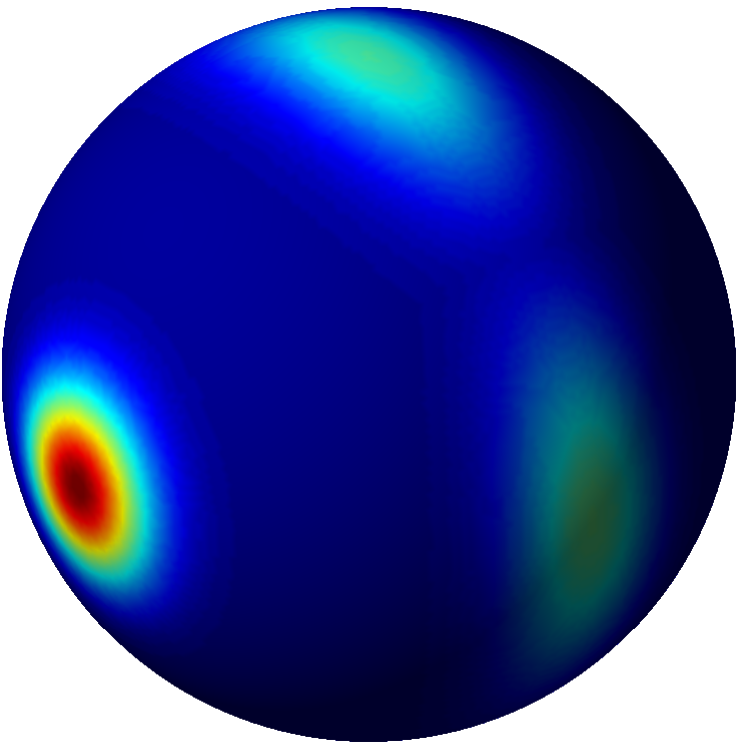}\label{fig:vis3}
		\begin{tikzpicture}[overlay]
		\draw[arrows={-Triangle[angle=30:4pt]}] (-2.9,0.25) -- ++(90:0.5);
		\draw[arrows={-Triangle[angle=30:4pt]}] (-2.9,0.25) -- ++(-30:0.5);
		\draw[arrows={-Triangle[angle=30:4pt]}] (-2.9,0.25) -- ++(210:0.5);
		\node at (-3.3,0.2) {\scriptsize $e_1$};
		\node at (-2.5,0.2) {\scriptsize $e_2$};
		\node at (-2.9,0.85) {\scriptsize $e_3$};
		\end{tikzpicture}
	}
}
\caption{Visualization of selected matrix Fisher distributions: the distribution in (b) is more concentrated than in (a), as the singular values of $F_b$ are greater than those of $F_a$; for both (a) and (b), the distributions of each axis are identical and circular as three singular values of each of $F_a$ and $F_b$ are identical; in (c), the first body-fixed axis (lower left) is more concentrated as the first singular value of $F_c$ is the greatest, and the distributions for the other two axes are elongated. %Compared with the third body-fixed axis (top), the maximum probability density of the second body-fixed axis (lower right) is greater, as the second singular value of $F_c$ is greater than the third.
}\label{fig:vis}
\end{figure}

The marginal probability density for each axis of selected matrix Fisher distributions are illustrated in Fig. \ref{fig:vis}.

\subsection{Geometric Interpretation}\label{sec:GI}

Next, we provide the geometric interpretation of the matrix parameter $F$ in determining the shape of the matrix Fisher distribution. Consider a set of rotation matrices parameterized by $\theta_i\in[0,2\pi)$ for $i\in\{1,2,3\}$ as
\begin{align}
R_i(\theta_i) = \exp(\theta_i\widehat {Ue_i}) UV^T=U\exp(\theta_i\hat e_i) V^T.\label{eqn:Ri}
\end{align}
This corresponds to the rotation of the mean attitude $UV^T$ of $\mathcal{M}(F)$, about the axis $Ue_i$ by the angle $\theta_i$, where $Ue_i$ is considered expressed with respect to the inertial frame. 

Using \refeqn{USV}, the probability density \refeqn{MF} along \refeqn{Ri} is given by
\begin{align}
p(R_i(\theta_i)) %& =\frac{1}{c(F)}\exp(\trs{VSU^T  U\exp(\theta_i\hat {e}_i) V^T})\nonumber\\
&=\frac{1}{c(S)}\exp(\trs{S\exp(\theta_i\hat{e}_i)}).\label{eqn:pRi0}
\end{align} 
Substituting Rodrigues' formula~\cite{ShuJAS93}, namely $\exp(\theta_i\hat{e}_i)=I_{3\times 3}+\sin\theta_i\hat e_i +(1-\cos\theta_i)\hat e_i^2$, and rearranging, this reduces to
\begin{align}
p(R_i(\theta_i)) 
&=\frac{e^{s_i}}{c(S)}\exp((s_j+s_k)\cos\theta_i),\label{eqn:pRi}
\end{align} 
where $j,k$ are determined such that $(i,j,k)\in\mathcal{I}$. This resembles the von Mises distribution on a circle~\cite{MarJup99}, where the probability density is proportional to $\exp^{\kappa\theta}$ with a concentration parameter $\kappa=s_j+s_k$. As such, when considered as a function of $\theta_i$, the distribution of $p(R_i(\theta_i)) $ becomes a uniform distribution as $s_j+s_k\rightarrow 0$, and it is more concentrated as $s_j+s_k$ increases. It has been also shown that when $s_j+s_k$ is sufficiently large, the von Mises distribution of $\theta_i$ is well approximated by the Gaussian distribution with the variance of $\frac{1}{s_j+s_k}$~\cite{MarJup99}. 

Another noticeable property of \refeqn{pRi} is that the probability density depends only on the singular values $s_i,s_j,s_k$ and the rotation angle $\theta_i$, and it is independent of $U$ or $V$.  Recall \refeqn{Ri} corresponds to the rotation of  the mean attitude $UV^T$ about the $i$-th column of $U$. Consequently, each column of $U$ is considered as the \textit{principle axis} of rotation for $\mathcal{M}(F)$, analogous to the principal axes of a multivariate Gaussian distribution. 

In summary, the role of $F=USV^T$ in determining the shape of the distribution of $\mathcal{M}(F)$ is as follows: (i) the mean attitude is given by $UV^T$; (ii) the columns of the rotation matrix $U$ specify the principle axes of rotations in the inertial frame; (iii) the proper singular vales $S$ describe the concentration of the distribution along the rotations about the principle axes, and in particular, the dispersion along the rotation of the mean attitude about the axis $Ue_i$ is determined by $s_j+s_k$ for $(i,j,k)\in\mathcal{I}$.

\begin{figure}
\centerline{
    %\subfigure[{$F_c=USV^T$}]{
        %\includegraphics[width=0.3\columnwidth]{}\label{fig:visrot1}}
    %\hfill
	\subfigure[{$F_c'=AF_c=(AU)SV^T$}]{\hspace*{0.4cm}
		\includegraphics[width=0.3\columnwidth]{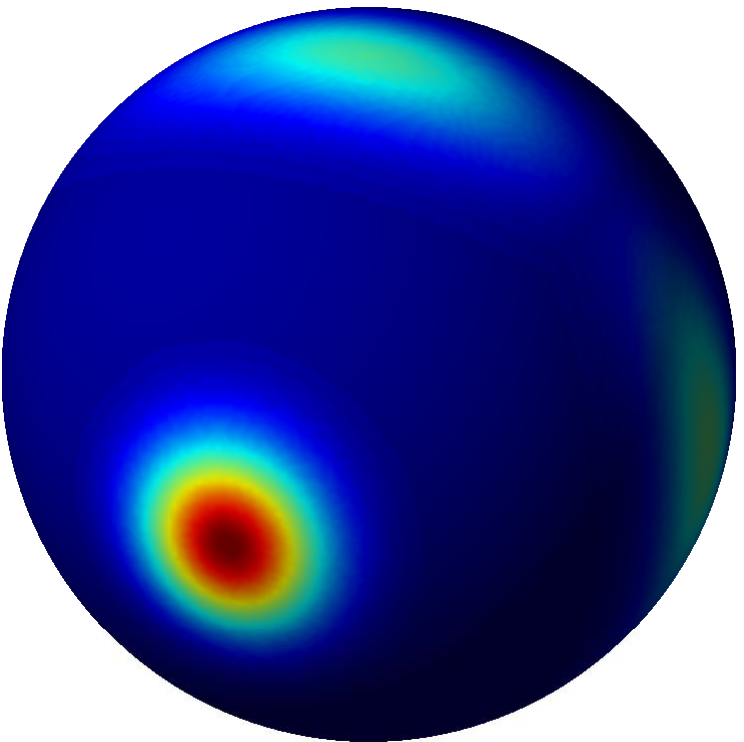}\label{fig:visrot2}
		\begin{tikzpicture}[overlay]
		\draw[arrows={-Triangle[angle=30:4pt]}] (-2.9,0.25) -- ++(90:0.5);
		\draw[arrows={-Triangle[angle=30:4pt]}] (-2.9,0.25) -- ++(-30:0.5);
		\draw[arrows={-Triangle[angle=30:4pt]}] (-2.9,0.25) -- ++(210:0.5);
		\node at (-3.3,0.2) {\scriptsize $e_1$};
		\node at (-2.5,0.2) {\scriptsize $e_2$};
		\node at (-2.9,0.85) {\scriptsize $e_3$};
		\end{tikzpicture}
		\hspace*{0.4cm}}
	\hspace*{0.1cm}
	\subfigure[{$F_c''=(AU)S(U^TA^TUV^T)$}]{\hspace*{0.6cm}
		\includegraphics[width=0.3\columnwidth]{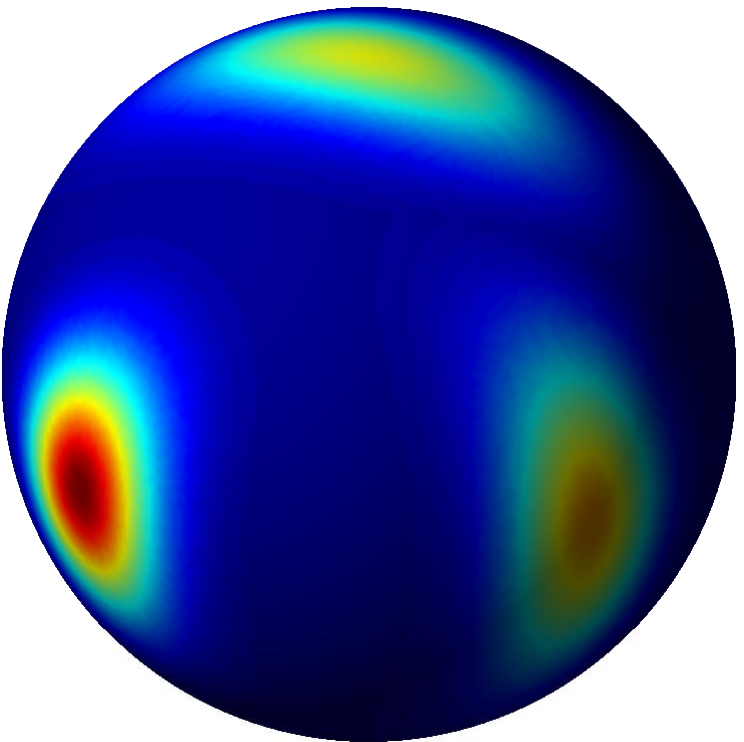}\label{fig:visrot3}\hspace*{0.6cm}}
}
\caption{The role of $U$ and $V$ in determining the shape of the distribution with $F_c=\mathrm{diag}[25,5,1]=USV^T$, and $A=\exp(\frac{\pi}{6}\hat e_3)$: (a) by left-multiplying the matrix parameter $F_c$ with $A$, both the mean attitude and the principal axes are rotated about the third inertial axis pointing upward; (b) the principal axes are identical to (a), but the mean attitude $(AU)(U^TA^TUV^T)=UV^T$ is rotated back to the mean attitude $UV^T$ of $F_c$.}\label{fig:visrot}
\end{figure}

For instance, consider $F_c=\mathrm{diag}[25,5,1]$, where $(s_1,s_2,s_3)=(25,5,1)$ and $U=V=I_{3\times 3}$ (see Figure \ref{fig:visrot}). The mean attitude is $I_{3\times 3}$, and the principal axes are $(e_1,e_2,e_3)$. Since $s_2+s_3=6 \leq  s_3+s_1=26 \leq s_1+s_2=30$, rotating the mean attitude about the first principal axis $e_1$ (lower left) is most dispersed, thereby making the marginal distribution of the second and the third body-fixed axes elongated along the great circle perpendicular to the first principal axis. 

The shape of the distribution can be altered as desired by adjusting the matrices $U$ and $V$ properly. Suppose we wish to purely rotate the distribution without changing the shape of the distribution relative to the mean attitude. In other words, we have to rotate the mean attitude and the direction of the principal axes simultaneously. This is achieved by changing $U$, or equivalently, \textit{left}-multiplying the matrix parameter $F_c$ with a certain rotation matrix. For example, $\mathcal{M}(\exp(\frac{\pi}{6}\hat e_3)F_c)$, corresponds to the matrix Fisher distribution obtained by rotating $\mathcal{M}(F_c)$ about the third inertial axis $e_3$ by $\frac{\pi}{6}$ (see Figure \ref{fig:visrot2}). Also, one may rotate the principal axes without altering the mean attitude by changing both of $U$ and $V$ as illustrated in Figure \ref{fig:visrot3}.

		% marginal distribution
\subsection{Maximum Likelihood Estimation}\label{sec:MLE}

Suppose there is a set of sample attitudes $R_1,R_2,\ldots,R_N$ of $N$ independent and identically distributed observations from a matrix Fisher distribution $\mathcal{M}(F)$, where $F$ is an unknown matrix parameter. We wish to determine the matrix parameter from the samples. 

Define the log-likelihood function as
\begin{align*}
\mathcal{L}(F|R)= -\log c(F) + \trs{F^TR}.
\end{align*}
The joint log-likelihood function is
\begin{align}
\mathcal{L}(F|R_1,\ldots, R_N) &= \sum_{i=1}^N \mathcal{L}(F|R_i).\label{eqn:JLL}%\\% -N \log c(S) + \sum_{i=1}^N \trs{VSU^T R_i} \\
%&=N\{- \log c(S) + \trs{VSU^T \bar R}\}, 
\end{align}
The maximum log-likelihood estimate of $F$ that maximizes \refeqn{JLL} is obtained as follows.

\begin{theorem}\label{thm:MLE}
Let $R_1,\ldots, R_N$ be independent and identically distributed samples from $\mathcal{M}(F)$. The arithmetic mean of the sample attitudes and its proper singular value decomposition are given by
\begin{align}
\bar R = \frac{1}{N}\sum_{i=1}^n R_i= U D V^T,\label{eqn:UDV}
\end{align}
where $U,V\in\SO$ and $D=\mathrm{diag}[d_1,d_2,d_3]\in\Re^{3\times 3}$. 

Suppose that the singular values of $F$ are distinct. The maximum log-likelihood estimate of the matrix parameter is given by
\begin{align}
F = U S V^T,\label{eqn:MLEF}
\end{align}
where $S=\mathrm{diag}[s_1,s_2,s_3]\in\Re^{3\times 3}$, and $(s_1,s_2,s_3)$ is the solution of 
\begin{align}
\frac{1}{c(S)}\deriv{c(S)}{s_i}-d_i =0,\quad \text{for $i\in\{1,2,3\}$}.\label{eqn:MLEs}
\end{align}
\end{theorem}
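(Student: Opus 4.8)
The plan is to reduce the joint log-likelihood to a single concave objective, impose first-order stationarity, and then read off the stated equations via the moment formula of Lemma~\ref{lem:ER}. First I would substitute the definition of $\mathcal{L}(F|R_i)$ into \refeqn{JLL} and use $\sum_{i=1}^N R_i = N\bar R$ to obtain
\begin{align*}
\mathcal{L}(F|R_1,\ldots,R_N) = -N\log c(F) + N\trs{F^T\bar R}.
\end{align*}
Since $N>0$ is a fixed constant, maximizing this over $F\in\Re^{3\times 3}$ is equivalent to maximizing $g(F) = -\log c(F) + \trs{F^T\bar R}$, so the problem reduces to a stationarity analysis of $g$.

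Next I would compute the gradient of $g$. Differentiating \refeqn{cF} under the integral sign, which is justified because $\SO$ is compact and the integrand is smooth in $F$, gives $\deriv{c(F)}{F_{ab}} = \int_{\SO} R_{ab}\exp(\trs{F^TR})\,dR = c(F)\,\mathrm{E}[R_{ab}]$, so that $\deriv{\log c(F)}{F} = \mathrm{E}[R]$ with $R\sim\mathcal{M}(F)$. Hence the stationarity condition $\deriv{g}{F}=0$ reads $\mathrm{E}[R]=\bar R$: the estimate is the parameter whose first moment matches the empirical mean, as expected for an exponential family. Moreover $\log c(F)$ is the cumulant generating function of the sufficient statistic $R$, so it is convex in $F$ and $g$ is concave; therefore any stationary point is the global maximizer, and it suffices to solve $\mathrm{E}[R]=\bar R$.

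To solve this, I would invoke Lemma~\ref{lem:ER}: writing the proper singular value decomposition $F = USV^T$, the first moment is $\mathrm{E}[R] = U\,\mathrm{E}[Q]\,V^T$, and this is itself a proper singular value decomposition with diagonal entries $\mathrm{E}[Q]_{ii} = \frac{1}{c(S)}\deriv{c(S)}{s_i}$ from \refeqn{M1}. Equating $\mathrm{E}[R]=\bar R$ to the proper singular value decomposition $\bar R = UDV^T$ in \refeqn{UDV} and matching the two decompositions forces the rotations $U,V$ to coincide with those of $\bar R$ and forces $\mathrm{E}[Q]=D$, i.e. $\frac{1}{c(S)}\deriv{c(S)}{s_i} = d_i$ for each $i$, which is precisely \refeqn{MLEF} together with \refeqn{MLEs}.

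The main obstacle is the matching of the two proper singular value decompositions, since a singular value decomposition is only unique up to sign and permutation ambiguities in the factors $U,V$. This is exactly where the hypothesis of distinct singular values is used: distinctness pins down the singular vectors, up to the sign conventions already fixed by the properness constraints $\det U=\det V=+1$, so the identification of $U,V$ and of $\mathrm{E}[Q]$ with $D$ is unambiguous. A secondary point worth checking is the solvability of \refeqn{MLEs}: properties \refeqn{dcSij} and \refeqn{dcS12} guarantee that the components of $\mathrm{E}[Q]$ obey the ordering required of proper singular values, so that $D$ lies in the range of the moment map $S\mapsto\mathrm{E}[Q]$ and the system \refeqn{MLEs} is consistent.
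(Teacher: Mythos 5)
Your proposal is correct in substance but takes a genuinely different route from the paper. The paper works directly with the parameterization $F=USV^T$: it writes the log-likelihood as a function of $(U,S,V)$, takes Lie-group variations $\delta U=U\hat\zeta_u$, $\delta V=V\hat\zeta_v$, and shows stationarity forces $SD=D^TS$ and $DS=SD^T$ for $D=U^T\bar R V$, whence $(s_i^2-s_j^2)D_{ij}=0$ and, by distinctness of the $s_i$, $D$ is diagonal --- i.e., the singular vectors of $\bar R$ align with those of $F$. You instead exploit the exponential-family structure: $\deriv{\log c(F)}{F}=\mathrm{E}[R]$, so stationarity is the moment-matching condition $\mathrm{E}[R]=\bar R$, which you solve via Lemma \ref{lem:ER} and uniqueness of the singular value decomposition. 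Your route buys two things the paper's proof does not: global optimality (convexity of $\log c(F)$ makes the objective concave, so the stationary point is a genuine maximizer --- the paper only ever checks first-order conditions), and a conceptual reading of \refeqn{MLEs} as matching the model's first moment to the empirical mean. The paper's variational computation, in exchange, uses the distinctness hypothesis exactly where it is stated --- on the singular values of the estimate --- and never needs any property of the singular values of $\bar R$.

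That last point is where your argument has a real, though fillable, gap. Your SVD-matching step requires the singular values of $\bar R$, i.e.\ the $d_i$, to be distinct, whereas the hypothesis gives distinctness of the $s_i$; these are different matrices, and you silently conflate them. To bridge this you need strict versions of \refeqn{dcSij} and \refeqn{dcS12}: from the appendix identity for $\deriv{c(S)}{s_i}-\deriv{c(S)}{s_j}$, the integrand contains the factor $I_1\!\bracket{\frac{1}{2}(s_i-s_j)(1-u)}$, which is strictly positive for $u\in(-1,1)$ when $s_i>s_j$, so $s_i>s_j$ implies $d_i>d_j$; similarly, distinct unsigned singular values force $s_2>|s_3|$, hence $s_2+s_3>0$ and $d_2+d_3>0$, giving $d_1>d_2>|d_3|$. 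Only then does SVD uniqueness apply. A second, minor imprecision: properness does not make the factors unique, even with distinct $d_i$ --- the proper SVD of $\bar R$ is determined only up to $U\to U\Sigma$, $V\to V\Sigma$ with $\Sigma$ a diagonal sign matrix of determinant one. This residual ambiguity is harmless because $\Sigma S\Sigma=S$, so the product $USV^T$ in \refeqn{MLEF} is unaffected, but the correct statement is that $F$ is unambiguous, not that $U$ and $V$ are.
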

\begin{proof}
Let $F=USV^T$ be the proper singular value decomposition of $F$. Then, determining $F$ is equivalent to finding $U,V\in\SO$ and the diagonal matrix $S$. We show the matrices $U,S,V$ that maximizes $\mathcal{L}$ is obtained by \refeqn{UDV} and \refeqn{MLEs}. The joint likelihood function is 
\begin{align*}
\frac{1}{N}\mathcal{L}(F|R_1,\ldots, R_N) &= -\log c(S) + \trs{V S U^T \bar R}.
\end{align*}
Since $U,V\in\SO$, their variations can be written as $\delta U = U\hat \zeta_u$ and $\delta V= V\hat \zeta_v$ for $\zeta_u,\zeta_v\in\Re^3$. Using these, the variation of $\mathcal{L}$ is given by
\begin{align}
\frac{1}{N}\delta\mathcal{L}
&= -\deriv{\log c(S)}{s} \delta s
+ \trs{V\hat\zeta_v SU^T \bar R}\nonumber\\
&\quad + \trs{V\mathrm{diag}[\delta s] U^T \bar R}
- \trs{VS\hat\zeta_u U^T \bar R},\label{eqn:dL}
\end{align}
which is zero for any $\delta s,\zeta_u,\zeta_v\in\Re^3$ at the maximum likelihood estimate. This implies that $SU^T \bar R V$ and $ U^T \bar RVS$ are symmetric. Therefore, 
\begin{align*}
SD = D^T S, \text{ and } DS=SD^T,
\end{align*}
where $D=U^T\bar R V$. Combining these,
\begin{align*}
D^T = S D S^{-1}= S^{-1} D S,
\end{align*}
or equivalently $\frac{s_i}{s_j} D_{ij} = \frac{s_j}{s_i} D_{ij}$ for $i\neq j\in\{1,2,3\}$. As $(s_1,s_2,s_3)$ are distinct, it follows $D_{ij}=0$ when $i\neq j$.
Since $D=U^T \bar R V$ is diagonal and $U,V\in\SO$, the proper singular value decomposition of $\bar R$ is given by \refeqn{UDV}. Substituting \refeqn{UDV} into \refeqn{dL}, we obtain \refeqn{MLEs}.
\end{proof}

%In summary, the matrix Fisher distribution is formulated on $\SO$ and several stochastic properties are derived in this section. It is shown that the matrix Fisher distribution on $\SO$ is defined by a matrix parameter $F\in\Re^{3\times 3}$ with 9 elements. Once it is decomposed into $F=USV^T$, the mean attitude is given by $UV^T$ and the dispersion from the mean attitude is determined by $S$: it is more concentrated as the singular values increase, and it approaches the uniform distribution as the singular values reduce to zero. The degree of dispersion can be further quantified via moments or cumulative distributions. %The geometric interpretation of the matrix parameter $F$ is further discussed in Section~\ref{sec:UT0}.

%Any matrix Fisher distribution can be visualized on the unit sphere, and it can be determined from sample attitudes. 

\section{Global First-Order Attitude Estimation}\label{sec:FAE}

In this section, an attitude estimation scheme is proposed based on the matrix Fisher distribution on $\SO$. Assuming that the initial attitude estimate and the attitude measurement errors are described by matrix Fisher distributions on $\SO$, we construct an estimated attitude distribution via another matrix Fisher distribution according to the Bayesian framework. Therefore, this approach is an example of so-called, \textit{assumed density filters}. 

One issue of any assumed density filter is that the propagated uncertainty is not guaranteed to be distributed as the selected density model. %This has been commonly addressed by two distinct approaches. The first one is approximating the dynamics such that the propagated uncertainty follows the selected density model. For example, in extended Kalman filters, the dynamics is linearized to ensure that the propagated uncertainty is Gaussian. The second option is instead approximating the density model by selected parameters along the solution of the exact dynamic model, such as in unscented filters. 
This section presents an attitude estimator where  the estimated matrix Fisher distribution is chosen such that its first moment matches with that of the propagated distribution. It is also shown that the matrix Fisher distribution is closed under conditioning with certain types of attitude measurements. 

%In short, selecting one of these corresponds to the following question of `what should be approximated between dynamics and probability distributions?' In attitude estimation problems, the  equations of motion are well known, but it is often challenging to obtain accurate probability distributions. In such cases, it may be reasonable to approximate probability distributions rather than corrupting the exact dynamic model by approximations. 

\subsection{Attitude Estimation Problem Formulation}

Consider a stochastic differential equation on $\SO$,
\begin{align}
(R^T dR)^\vee = \Omega dt + H dW,\label{eqn:SDE}
\end{align}
where $\Omega\in\Re^3$ is the angular velocity resolved in the body-fixed frame, $H\in\Re^{3\times 3}$ is a diagonal matrix, and $W\in\Re^3$ denotes an array of independent, identically distributed Wiener processes. This is defined according to the Ito sense~\cite{KarShr91,Oks14}. It is assumed that $\Omega$ is available from an angular velocity sensor. The Wiener processes scaled by the matrix $H$ correspond to the noise from the angular velocity sensor.

Suppose that the initial attitude follows a matrix Fisher distribution with a known matrix parameter, and the attitude is measured repeatedly. We wish to determine the current estimate of the attitude based on the initial distribution and the measurement history according to the Bayesian framework. The time variable $t$ is discretized with a fixed step size $h>0$, and let the value of a variable at the $k$-th time step be denoted by the subscript $k$.

\subsection{Prediction}

We first show that an analytic expression can be obtained for the first moment propagated along \refeqn{SDE}.

\begin{theorem}\label{thm:PDT}
Suppose $R_k\sim\mathcal{M}(F_k)$ for a given $F_k\in\Re^{3\times 3}$, and assume $\Omega(t)$ is fixed over $t\in[t_k,t_{k+1}]$. Let $R_{k+1}$ be the solution of \refeqn{SDE} at $t_{k+1}$. The first moment of $R_{k+1}$ is given by
\begin{align}
\mathrm{E}[R_{k+1}]&= \mathrm{E}[R_k]\braces{I_{3\times 3}+\frac{h}{2}(-\trs{G_k}I_{3\times 3} + G_k)}\exp(h\hat\Omega_k)\nonumber\\
&\quad +\mathcal{O}(h^{1.5}),\label{eqn:ERkp}
\end{align}
where $G_k=H_kH_k^T\in\Re^{3\times 3}$.
\end{theorem}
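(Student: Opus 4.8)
The plan is to integrate the stochastic differential equation \refeqn{SDE} over a single step $[t_k,t_{k+1}]$ and take expectations, the central point being the Itô correction generated by the multiplicative noise. First I would make the left-trivialized equation precise: writing $dR = R\,\hat\Omega\,dt + R\,\widehat{H\,dW}$, one checks that the naive reading does \emph{not} preserve $R^TR=I_{3\times 3}$, because the quadratic variation of the noise contributes a nonzero symmetric drift. Indeed, with $\widehat{H\,dW}=\sum_{a=1}^3 H_{aa}\hat e_a\,dW_a$ (as $H$ is diagonal) and the Itô rule $dW_a\,dW_b=\delta_{ab}\,dt$, imposing $d(R^TR)=0$ forces the effective Itô drift to be $A=\hat\Omega+\tfrac12(G-\trs{G}I_{3\times 3})$, where I use $\hat e_a^2 = e_ae_a^T - I_{3\times 3}$ so that $\sum_a G_{aa}\hat e_a^2 = G-\trs{G}I_{3\times 3}$ with $G=HH^T$. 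This identification of the diffusion-induced drift is where I expect the main obstacle to lie: the notation $(R^TdR)^\vee$ conceals the fact that $R^TdR$ is not skew-symmetric in the Itô sense, so one must argue — via the manifold constraint, or equivalently via a Stratonovich-to-Itô conversion — that the term $\tfrac12(G-\trs{G}I_{3\times 3})$ must be present.

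Granting this, the cleanest route is to take expectations directly. Since $\Omega$ and $H$ are fixed on $[t_k,t_{k+1}]$, the coefficient $A$ is constant, and the stochastic integral $\int R\,\widehat{H\,dW}$ is a zero-mean martingale (using that $R\in\SO$ is bounded). Hence $m(t):=\mathrm{E}[R(t)]$ satisfies the linear matrix ODE $\dot m = mA$, giving the exact identity $\mathrm{E}[R_{k+1}] = \mathrm{E}[R_k]\exp(hA)$. Equivalently, one may expand the one-step solution in a stochastic Taylor series $R_{k+1}=R_k\{I_{3\times 3}+\hat\Omega h+\widehat{H\Delta W}+\tfrac12(\widehat{H\Delta W})^2+\cdots\}$ and take the expectation term by term: the single-increment term vanishes, while the quadratic term contributes $\tfrac12\mathrm{E}[(\widehat{H\Delta W})^2]=\tfrac12(G-\trs{G}I_{3\times 3})h$ by $\mathrm{E}[\Delta W_a\Delta W_b]=\delta_{ab}h$. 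Either way the $G$-dependence arises precisely from the second moment of the Wiener increment.

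Finally I would match the stated form by a deterministic expansion of the matrix exponential. Writing $\exp(hA)=I_{3\times 3}+h\hat\Omega+\tfrac{h}{2}(G-\trs{G}I_{3\times 3})+\mathcal{O}(h^2)$ and factoring out the drift-free rotation gives $\exp(hA)=\{I_{3\times 3}+\tfrac h2(-\trs{G}I_{3\times 3}+G)\}\exp(h\hat\Omega)+\mathcal{O}(h^2)$, since the two sides agree through first order in $h$. Substituting into $\mathrm{E}[R_{k+1}]=\mathrm{E}[R_k]\exp(hA)$ yields \refeqn{ERkp}, with a remainder that is in fact $\mathcal{O}(h^2)$ and hence well within the claimed $\mathcal{O}(h^{1.5})$ — the coarser bound being the natural one if the step is expanded stochastically rather than solved exactly. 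The remaining points, namely the vanishing of the off-diagonal and odd-order increment expectations and the boundedness justifying the martingale property, are routine given $R\in\SO$.
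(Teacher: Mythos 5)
Your proof is correct, and it reaches \refeqn{ERkp} by a genuinely different route than the paper. The paper first passes to the rotating frame $Y(t)=R(t)\exp(-(t-t_k)\hat\Omega_k)$, which strips out the drift, and then invokes a stochastic Magnus expansion (Proposition \ref{prop:ERkp0} in the appendix): the drift-free solution is represented pathwise as $R_k\exp(\hat q_1+\hat q_2+\mathcal{O}(\|\Delta W_k\|^3))$, with $q_1=\int_{t_k}^{t_{k+1}}H\,dW$ and $q_2$ an iterated integral, and the moment formula follows by expanding this exponential and computing $\mathrm{E}[q_1]=\mathrm{E}[q_2]=0$ and $\mathrm{E}[q_1q_1^T]=hG_k+\mathcal{O}(h^2)$ term by term from the Riemann-sum definition of the It\^o integral; the $\mathcal{O}(h^{1.5})$ remainder is exactly the $\mathrm{E}[\|\Delta W_k\|^3]$ truncation error of that expansion. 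You instead identify the It\^o drift correction $\tfrac{1}{2}(G_k-\trs{G_k}I_{3\times 3})$ demanded by the constraint $R^TR=I_{3\times 3}$ (equivalently, by Stratonovich-to-It\^o conversion), after which the mean satisfies the constant-coefficient linear ODE $\dot m=mA$ with $A=\hat\Omega_k+\tfrac{1}{2}(G_k-\trs{G_k}I_{3\times 3})$, solved exactly as $\mathrm{E}[R_{k+1}]=\mathrm{E}[R_k]\exp(hA)$; the rest is a purely deterministic factorization of $\exp(hA)$. This is shorter, avoids moments of iterated stochastic integrals, and yields the sharper remainder $\mathcal{O}(h^2)$ when $H$ is frozen at $H_k$ over the step (which is all the theorem requires). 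Your opening observation is also not a detour but the point on which the theorem's validity hinges: read literally as a linear-space It\^o equation with no correction, \refeqn{SDE} makes the noise term a martingale so that $\dot m=m\hat\Omega_k$, and the $G_k$ term in \refeqn{ERkp} would be absent; the formula holds precisely because the intended solution is the $\SO$-preserving one, which the paper's proof selects implicitly by positing the exponential (Magnus) form $R_k\exp(\cdot)$, and which you select explicitly through the drift correction. What the paper's heavier machinery buys in exchange is a pathwise representation of $R_{k+1}$ on $\SO$, from which higher moments could in principle be extracted, rather than a statement about the first moment alone.
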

\begin{proof}
Let $Y(t)=R(t)\exp(-(t-t_k)\hat\Omega_k)\in\SO$ for $t\geq t_k$.  Since the second order derivative of $Y$ with $R$ is zero, from Ito's lemma,
\begin{align*}
dY(t) & = dR(t)\exp(-(t-t_k)\hat\Omega_k))\nonumber\\
&\quad - R(t)\hat\Omega_k \exp(-(t-t_k)\hat\Omega_k) dt. 
\end{align*}
Substituting \refeqn{SDE},
\begin{align*}
dY(t)&=R(t) (H(t)dW(t))^\vee \exp(-(t-t_k)\hat\Omega_k)\\
%& = Y(t)\exp((t-t_k)\hat\Omega_k) (H(t)dW(t))^\vee \exp(-(t-t_k)\hat\Omega_k)\\
& = Y(t)\{ \exp((t-t_k)\hat\Omega_k) H(t) dW(t)\}^\vee, 
\end{align*}
where we have used the fact that $R\hat x R^T = (Rx)^\vee$ for any $R\in\SO$ and $x\in\Re^3$ to obtain the last equality.

The above stochastic differential equation for $Y(t)$ does not have a drift term, and according to Proposition \ref{prop:ERkp0},
\begin{align*}
\mathrm{E}[Y_{k+1}]=\mathrm{E}[Y_{k}]\braces{I_{3\times 3}+\frac{h}{2}(-\trs{G_k}I_{3\times 3}+ G_k)+\mathcal{O}(h^{1.5})},
\end{align*}
as $\exp((t-t_k)\hat\Omega_k) H(t)=H_k$ when $t=t_k$. This yields \refeqn{ERkp} since $Y(t_k)=R(t_k)$, and $Y(t_{k+1})=R(t_k)\exp(-h\hat\Omega_k)$.
\end{proof}

In short, when $R_k\sim\mathcal{M}(F_k)$, the first moment of $R_{k+1}$ is obtained by \refeqn{ERkp} up to the order of $h^{1.5}$. Then, the distribution of $R_{k+1}$ can be approximated by another matrix Fisher distributions $\mathcal{M}(F_{k+1})$ via Theorem \ref{thm:MLE}, such that the first moment of $\mathcal{M}(F_{k+1})$ matches with the value obtained by \refeqn{ERkp}. More explicitly, the procedure to construct $F_{k+1}\in\Re^{3\times 3}$ is as follows: the right-hand size of \refeqn{ERkp} is decomposed using the proper singular value decomposition to obtain $\mathrm{E}[R_{k+1}]=U_{k+1}D_{k+1}V_{k+1}^T$, and \refeqn{MLEs} is solved for $S_{k+1}$ to obtain $F_{k+1}$ from \refeqn{MLEF} (see the steps 10 through 16 of Table \ref{tab:FAE} for details).

This provides a prediction scheme to construct $\mathcal{M}(F_{k+1})$ from $\mathcal{M}(F_{k})$ along the stochastic differential equation \refeqn{SDE} for given $\Omega_k$ and $H_k$, where the first moment of $\mathcal{M}(F_{k+1})$ matches with that of $R_{k+1}$ up to the order of $h^{1.5}$.

\subsection{Correction}\label{sec:Cor}

Next, we analyze the correction step, or the measurement update step, of Bayesian estimation. Two types of attitude measurements are considered. 

\paragraph{Full Attitude Measurement} Suppose there are $N_Z$ attitude sensors, such as inertial measurement units that provide full three-dimensional attitude measurements. In the absence of the measurement error, the sensor measurement $Z_i\in\SO$ from the $i$-th attitude sensor is given by
\begin{align}
Z_i = R.
\end{align}
It is assumed that the measurement error defined by $R^T Z_i\in\SO$ follows a matrix Fisher distribution with the matrix parameter $F_{Z_i}\in\Re^{3\times 3}$, i.e., $R^T Z_i\sim\mathcal{M}(F_{Z_i})$ with $F_{Z_i}\in\Re^{3\times 3}$ for $i\in\{1,\ldots,N_Z\}$. Equivalently, the probability density of the measurement $Z_i$, conditioned by the true attitude $R$ is given by
\begin{align}
p(Z_i|R) = \frac{1}{c(F_{Z_i})} \exp(\trs{F_{Z_i}^T R^T Z_i}),\label{eqn:pZR}
\end{align}
where the property (ii) of Theorem \ref{thm:C} has been used for $c(RF_{Z_i})=c(F_{Z_i})$. 

The matrix parameter $F_{Z_i}$ determines the stochastic characteristics of the $i$-th attitude sensor completely. For example, if the mean attitude of $\mathcal{M}(F_{Z_i})$ is identity, the attitude sensor is unbiased in the sense that the mean attitude measurement of $p(Z_i|R)$ is the true attitude $R$, and the sensor is more accurate if the proper singular values of $F_{Z_i}$ are greater. 

\paragraph{Direction Measurement} Suppose there are $N_z$ distinctive directions, such as the magnetic north or the direction of the gravity, that are prescribed in the inertial frame. These are measured by a set of direction sensors, such as a magnetometer or accelerometer, in the body-fixed frame. 

Let the $i$-th known direction resolved in the inertial frame be given by $a_i\in\Sph^2$. In the absence of measurement noise, the direction measurement resolved in the body-fixed frame, namely $z_i\in\Sph^2$ is given by
\begin{align}
z_i=R^T a_i.
\end{align}
The stochastic properties of the sensor or the measurement errors are characterized by the following conditioned probability density of $z_i|R$,
\begin{align}
p(z_i| R) =\frac{b_i}{4\pi \sinh b_i}\exp(b_i a_i^TB_i^T Rz_i),\label{eqn:pzR}
\end{align}
where $0<b_i\in\Re$ and $B_i\in\SO$. The above distribution is referred to as the von-Mises Fisher distribution of the unit-vectors on $\Sph^2$~\cite{MarJup99}, and it is defined relative to the uniform distribution on $\Sph^2$. This is a single pole distribution centered at $R^TB_ia_i\in\Sph^2$, and it is more concentrated as $b_i$ becomes larger. Therefore, $b_i$ and $B_i$ specify the concentration and the bias of the direction sensor, respectively. For example, if $B_i=I_{3\times 3}$, then the mean direction of \refeqn{pzR} is $R^TB_ia_i=R^Ta_i$ and the corresponding direction sensor is unbiased. 

With these measurements, the a posteriori attitude probability distribution is obtained as follows. 

\begin{theorem}\label{thm:MU}
Suppose the a priori attitude distribution is given by $\mathcal{M}(F)$ with a matrix parameter $F\in\Re^{3\times 3}$. Consider a set of attitude measurements $\{Z_1,\ldots, Z_{N_Z}\}\in\SO^{N_Z}$ distributed by \refeqn{pZR} for some $(F_{z_1},\ldots, F_{Z_{N_Z}})\in(\Re^{3\times 3})^{N_Z}$, and a set of direction measurements $\{z_1,\ldots, z_{N_z}\}\in(\Sph^2)^{N_z}$ following \refeqn{pzR} for some $\{(b_i,B_i)\}_{i=1}^{N_z}\in(\Re,\SO)^{N_z}$. All of the attitude measurements and direction measurements are mutually independent. Then, the a posteriori distribution for $R|(Z_1,\ldots, Z_{N_Z},z_1,\ldots z_{N_Z})$ conditioned by all of the measurements is also a matrix Fisher distribution. Specifically,
\begin{align}
R&|(Z_1,\ldots, Z_{N_Z},z_1,\ldots z_{N_Z})\nonumber\\
&
 \sim \mathcal{M}(F + \sum_{i=1}^{N_Z} Z_iF_{Z_i}^T +\sum_{j=1}^{N_z} b_j B_j a_j z_j^T).\label{eqn:RZz}
\end{align}
\end{theorem}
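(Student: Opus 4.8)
The plan is to apply Bayes' rule, which states that the a posteriori density is proportional to the product of the a priori density and all the conditional measurement densities, and then show that this product collapses back into the exponential form that defines a matrix Fisher distribution. Since all measurements are mutually independent given $R$, I would write
\begin{align*}
p(R|Z_1,\ldots,z_{N_z}) \propto p(R)\prod_{i=1}^{N_Z} p(Z_i|R)\prod_{j=1}^{N_z} p(z_j|R),
\end{align*}
where the proportionality absorbs everything that does not depend on $R$ (including the evidence and all normalizing constants $c(F_{Z_i})$ and $\frac{b_j}{4\pi\sinh b_j}$).

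The key algebraic step is that each factor is the exponential of a trace that is \emph{linear} in $R$, so that the product becomes the exponential of a single sum of traces. For the a priori term I would substitute \refeqn{MF} to get $\exp(\trs{F^T R})$. For the full attitude measurements I would use \refeqn{pZR}, rewriting $\trs{F_{Z_i}^T R^T Z_i}$ using the cyclic property of the trace as $\trs{(Z_i F_{Z_i}^T)^T R}$, which is linear in $R$. For the direction measurements I would rewrite the scalar $b_i a_i^T B_i^T R z_i$ in \refeqn{pzR} as a trace $\trs{(b_i B_i a_i z_i^T)^T R}$, again using that a scalar equals its own trace together with cyclic permutation. Collecting all three contributions inside one exponential gives
\begin{align*}
p(R|\cdots)\propto \exp\!\parenth{\trs{\parenth{F+\sum_{i=1}^{N_Z} Z_i F_{Z_i}^T + \sum_{j=1}^{N_z} b_j B_j a_j z_j^T}^{\!T} R}},
\end{align*}
which by Definition \ref{def:MF} is exactly the unnormalized density of $\mathcal{M}(\tilde F)$ with $\tilde F = F+\sum_i Z_i F_{Z_i}^T+\sum_j b_j B_j a_j z_j^T$, yielding \refeqn{RZz}.

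Finally I would argue that the normalization is automatic: since the right-hand side has the functional form of a matrix Fisher density and the posterior must integrate to one over $\SO$, the omitted constant is necessarily $1/c(\tilde F)$, so no separate computation of the evidence is required. The only genuinely delicate point, and the step I expect to be the main obstacle, is the careful trace bookkeeping in the direction-measurement term: one must verify that $a_i^T B_i^T R z_i = \trs{(b_i B_i a_i z_i^T)^T R}$ after moving the scalar $b_i$ inside, so that the outer-product $b_i B_i a_i z_i^T$ appears in precisely the right orientation to be \emph{added} to $F$ rather than transposed or conjugated. Everything else is a direct consequence of independence and the linearity of the trace in $R$; no properties of the normalizing constant beyond its $R$-independence are needed.
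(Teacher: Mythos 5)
Your proposal is correct and follows essentially the same route as the paper's proof: Bayes' rule with mutual independence, collecting the exponents of the a priori density and all measurement likelihoods into a single trace linear in $R$, and identifying the result as an unnormalized matrix Fisher density. The trace identities you flag as the delicate step (e.g., $\trs{F_{Z_i}^T R^T Z_i}=\trs{(Z_iF_{Z_i}^T)^TR}$ and $b_ia_i^TB_i^TRz_i=\trs{(b_iB_ia_iz_i^T)^TR}$) are exactly the "rearranging" the paper leaves implicit, and your normalization argument matches the paper's use of proportionality.
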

\begin{proof}
Let $\mathcal{Z}=(Z_1,\ldots, Z_{N_Z},z_1,\ldots z_{N_Z})$. 
According to Bayes' rule, 
\begin{align*}
p(R|\mathcal{Z})= \frac{p(\mathcal{Z}|R) p(R)}{p(\mathcal{Z})}\propto p(\mathcal{Z}|R) p(R) ,
\end{align*}
as $p(\mathcal{Z})$ does not depend on $R$. As all of the measurements are assumed to be mutually independent,
\begin{align*}
p(R|\mathcal{Z})\propto \braces{\prod_{i=1}^{N_Z} p(Z_i|R)}\braces{\prod_{j=1}^{N_z} p(z_i|R)}p(R).
\end{align*}
Substituting \refeqn{pZR}, \refeqn{pzR}, and \refeqn{MF} and rearranging,
\begin{align*}
p(R|\mathcal{Z})\propto \exp(\trs{(F + \sum_{i=1}^{N_Z} Z_iF_{Z_i}^T +\sum_{j=1}^{N_z} b_j B_j a_j z_j^T)^T R}),
\end{align*}
which shows \refeqn{RZz}.
\end{proof}

Therefore, this may handle a wide variety of sensor configuration, including a single direction measurement, or a combination of attitude sensors and directions sensors with various accuracies and biases. In contrast to the prediction step, no approximation or projection is required, as the a posteriori distribution is guaranteed to be a matrix Fisher distribution. 

\subsection{First-Order Attitude Estimation}

By combining the prediction scheme of Theorem \ref{thm:PDT} and the correction step of Theorem \ref{thm:MU}, a Bayesian attitude estimator based on the matrix Fisher distribution on $\SO$ is formulated, as summarized in Table \ref{tab:FAE}. This is considered  a first-order estimator as the prediction step matches the first moment of the propagated attitude distribution. Note that this should be distinguished from the first moment matching for the Gaussian distributions on $\Re^n$, as there are nine elements in the first moment $\mathrm{E}[R]$ that carry the stochastic information on the mean attitude and the distribution.  

The proposed attitude estimator does not require any assumption on the degree of uncertainties and estimation errors, and it does not rely on any approximation such as linearization. Therefore, it is capable of handling large uncertainties and estimation errors in an intrinsic fashion on the special orthogonal group. There are the unique features of the proposed attitude estimator, compared with the current variations of extended Kalman filter or unscented Kalman filters developed in terms of quaternions.

\newcommand{\algrule}[1][.2pt]{\par\vskip.2\baselineskip\hrule height #1\par\vskip.2\baselineskip}

\begin{table}
\caption{First-Order Attitude Estimation}\label{tab:FAE}
\begin{algorithmic}[1]
\algrule[0.8pt]
\Procedure{First-Order Attitude Estimation}{}
\algrule
	\State $R_0\sim \mathcal{M}(F_0)$, $k=0$
	\Repeat
		\State $F_{k+1}=${\fontshape{sc}\selectfont Propagation}$(F_k, \Omega_k, H_k)$
		\State $k=k+1$
	\Until{$Z_{k+1}$ or $z_{i_{k+1}}$ is available}
	\State $F_{k+1}=${\fontshape{sc}\selectfont Correction}$(F_{k+1}, Z_{k+1},z_{k+1})$
	\State \textbf{go to} Step 3
\EndProcedure
\algrule
\Procedure{$F_{k+1}$=Propagation}{$F_k$, $\Omega_k$, $H_k$}
	\State Compute $\mathrm{E}[R_k]$ with $F_k$ from \refeqn{ER}
	\State Compute $\mathrm{E}[R_{k+1}]$ with $\Omega_k$ and $H_k$ from \refeqn{ERkp}
	\State Perform the singular value decomposition of $\mathrm{E}[R_{k+1}]$ to obtain $U,D,V$ from \refeqn{UDV}
	\State Solve \refeqn{MLEs} for $S$ with $D$
	\State Compute $F_{k+1}$ with $U,S,V$ from \refeqn{MLEF}
\EndProcedure
\algrule
\Procedure{$F^+$=Correction}{$F^-$, $Z$, $z$}
	\State Compute $F^+$ from \refeqn{RZz} with $Z,z$
\EndProcedure
\algrule[0.8pt]
\end{algorithmic}
\end{table}

\section{Global Unscented Attitude Estimation}\label{sec:UAE}

The attitude estimator presented in the previous section based on the averaged solution \refeqn{ERkp} to the stochastic differential equation \refeqn{SDE}. Alternatively, one may approximate a probability distribution on $\SO$ by selected attitudes and weights and propagate them along the solution of the stochastic differential equation, such as in unscented filters~\cite{JulUhlITAC00}. In nonlinear estimation on $\Re^n$, unscented filters are favored as there is no need for sacrificing the relatively well-known information of the dynamic system via linearization, and they have a higher accuracy, compared with the popular extended Kalman filter for nonlinear systems, 

This section provides an unscented transform to approximate a matrix Fisher distribution on $\SO$ via selected attitudes and weighting parameters. Based on this, an unscented attitude estimator is constructed. 

\subsection{Unscented Transform}\label{sec:UT0}

Suppose $R\sim\mathcal{M}(F)$. We wish to define a set of rotation matrices and weights that approximate $\mathcal{M}(F)$.  In the unscented transformation for the Gaussian distribution in $\Re^n$, the sigma points may be chosen along the principle axes of the Gaussian ellipsoid. Motivated by this and the geometric interpretation of the matrix Fisher distribution summarized in Section \ref{sec:GI}, the following unscented transform is proposed for the matrix Fisher distribution on $\SO$.

\begin{definition}{(Unscented Transform)}\label{def:UT}
Consider a matrix Fisher distribution $\mathcal{M}(F)$ for a matrix parameter $F\in\Re^{3\times 3}$. Let the proper singular value decomposition of $F$ is given by \refeqn{USVp}. 

The set of seven sigma points $\Sigma_R(F)\subset\SO^7$ is defined as
\begin{align}
\Sigma_R(F)&=\{R_0=UV^T\}\cup\{\,R_i(\theta_i),R_i(-\theta_i)\,|\, i\in\{1,2,3\}\},\label{eqn:SP}
\end{align}
where $R_i(\theta_i)$ is given by \refeqn{Ri}, with $0<\theta_i<\pi$ defined as
\begin{subnumcases}{\label{eqn:costhetai} \cos\theta_i=}
\sigma + \dfrac{(1-\sigma)(\log c(S) - s_i)}{s_j+s_k}\;  \mbox{if $s_j+s_k\geq 1$},\label{eqn:costhetaia}\\
\{\sigma + (1-\sigma)(\log c(S) - s_i)+\frac{1}{2}\}(s_j+s_k)-\frac{1}{2}\nonumber\\  \mbox{else if $0\leq s_j+s_k < 1$,}\label{eqn:costhetaib}
\end{subnumcases}
%\begin{align}
%\cos\theta_i%\nonumber\\
% = \left\{
%\begin{array}{p{4cm}l}
% $\sigma + \dfrac{(1-\sigma)(\log c(S) - s_i)}{s_j+s_k}$ & \mbox{if $s_j+s_k\geq 1$},\\
% \multicolumn{2}{l}{\{\sigma + (1-\sigma)(\log c(S) - s_i)+\frac{1}{2}\}(s_j+s_k)-\frac{1}{2}}\\[0.1cm]
% &\mbox{otherwise.}
% \end{array}\right.
%\label{eqn:costhetai}
%\end{align}
%$\{\sigma + (1-\sigma)(\log c(S) - s_i)+\frac{1}{2}\}(s_j+s_k)-\frac{1}{2}$
for $(i,j,k)\in\mathcal{I}$ with a parameter $\sigma$ satisfying
\begin{align}
0\leq \max\left\{ \frac{2s_1+s_2-s_3-1}{2s_1+s_2-s_3+1} ,\frac{s_1-s_3}{s_1+s_2}\right\} < \sigma < 1.\label{eqn:sigma}
\end{align}

Next, let $w_0\in\Re$ be the weight of $R_0=UV^T$, and let $w_i\in\Re$ be the weight of $R_i(\pm\theta_i)$. The set of weights $\Sigma_w(F)=\{w_0\}\cup\{w_i,w_i\,|\,i\in\{1,2,3\}\}\subset\Re^7$ for each sigma point of \refeqn{SP} is defined by
\begin{align}
w_i & = \frac{1}{4(1-\cos\theta_i)}\braces{\frac{1}{c(S)}\parenth{\deriv{c(S)}{s_i}-\deriv{c(S)}{s_j}-\deriv{c(S)}{s_k}}+1},\label{eqn:Wi}\\
w_0 & = 1 - 2\sum_{i=1}^3 w_i.\label{eqn:W0}
\end{align}
The concatenated map $\Sigma(F)=(\Sigma_R(F),\Sigma_w(F))\subset(\SO\times\Re)^7$ is referred to as the unscented transform of $\mathcal{M}(F)$. 
\end{definition}

In other words, for a given parameter matrix $F$, the seven sigma points are chosen as the mean attitude, and rotations of the mean attitude about each principle axis by the angle determined by \refeqn{costhetai}. Note that each sigma point is guaranteed to be a rotation matrix in $\SO$, and the sum of the weighting parameters is equal to one.

The equation \refeqn{costhetai} to select the rotation angle is motivated as follows. Consider the first case \refeqn{costhetaia} when $s_j+s_k\geq 1$. For any $i\in\{1,2,3\}$, substituting \refeqn{costhetaia} into \refeqn{pRi} and taking logarithm, we obtain
\begin{align*}
\log  p (R_i(\pm \theta_i))
%& = -\log c(S) +s_i + \sigma (s_j+s_k)\\&\quad  + (1-\sigma)(\log c(S) -s_i)\\
&=\sigma(-\log c(S) + \trs{S}),
\end{align*}
which follows that the last six sigma points of \refeqn{SP} have the identical value of the probability density, given by $\frac{1}{c(S)}\exp (\sigma \trs{S})$. This is similar to the conventional unscented transform for a Gaussian distribution in $\Re^n$, where the non-central sigma points share the same value of the density. The ratio of the shared probability density to the maximum density, $\frac{1}{c(S)}\exp(\trs{S})$ is given by $\exp((\sigma-1)\trs{S})$. Therefore, the last six sigma points will be closer to the mean attitude, when the distribution is concentrated with larger singular values, or when $\sigma$ becomes greater. As $\sigma\rightarrow 1$, all of the sigma points converge to the mean attitude. 

\begin{figure}
\centerline{
	\subfigure[$F_a=5I_{3\times 3}$]{
		\includegraphics[width=0.3\columnwidth]{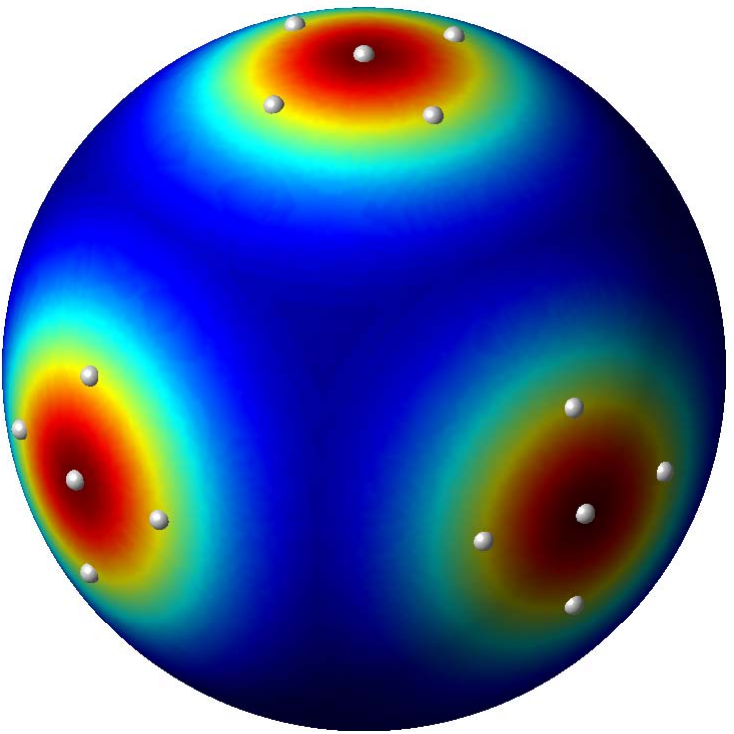}\label{fig:sig1}}
	\hfill
	\subfigure[$F_b=20I_{3\times 3}$]{
		\includegraphics[width=0.3\columnwidth]{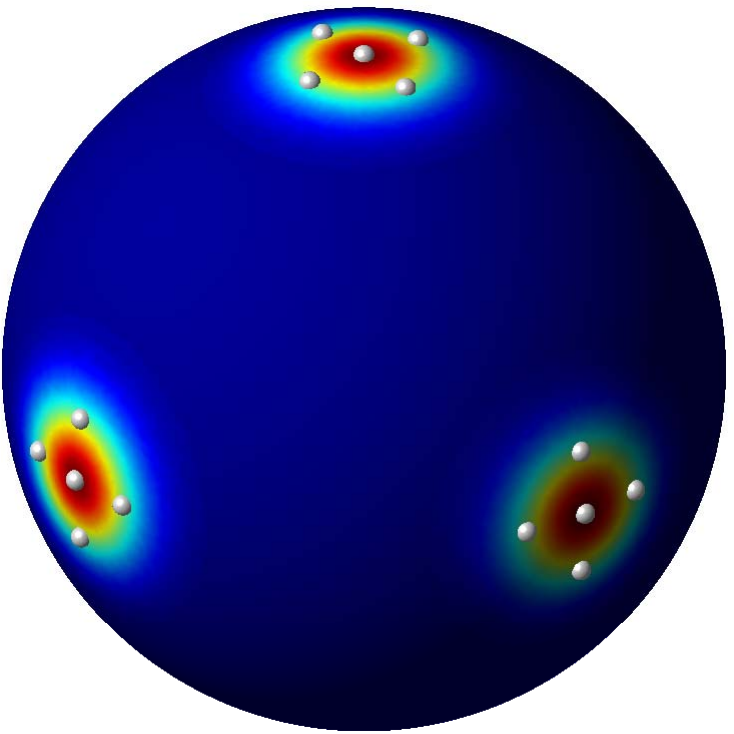}\label{fig:sig2}}
	\hfill
	\subfigure[{$F_c=\mathrm{diag}[25,5,1]$}]{\hspace*{0.005\textwidth}
		\includegraphics[width=0.3\columnwidth]{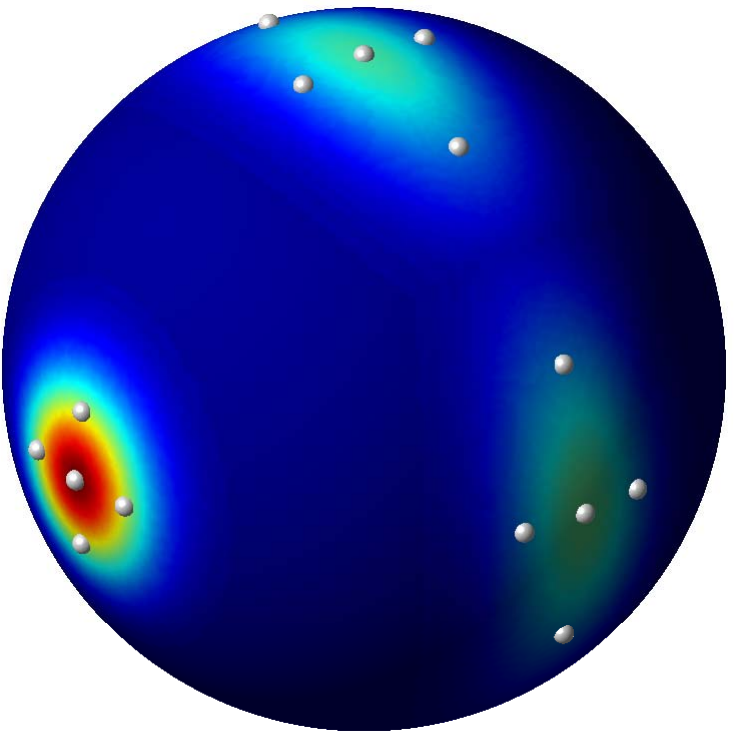}\label{fig:sig3}\hspace*{0.005\textwidth}}
}
\centerline{
	\subfigure[$\cos\theta_i$ with $\sigma=0.9$ and $s_i=15$]{
		\includegraphics[width=0.75\columnwidth]{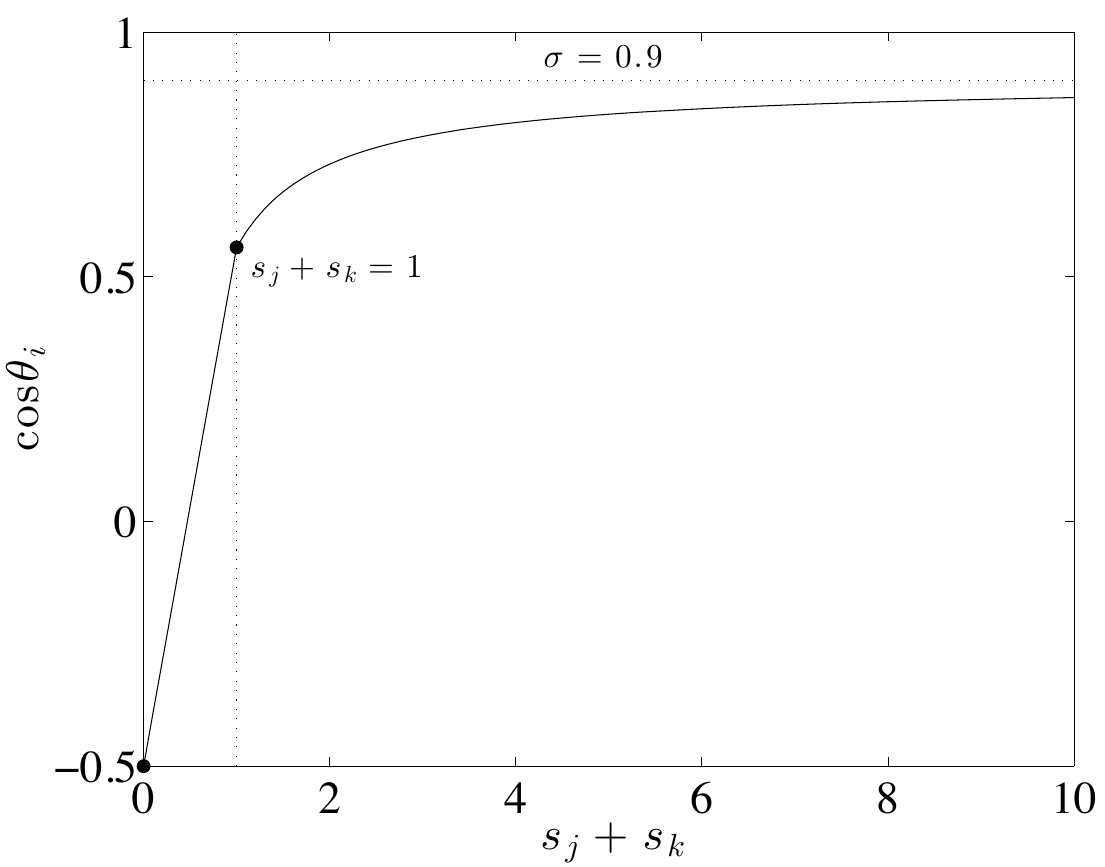}\label{fig:costhetai}}
}
\caption{Visualization of sigma points with $\sigma=0.9$: (a-c) the body-fixed axes of each sigma point for selected matrix Fisher distributions are illustrated; (d) $\cos\theta_i$ defined at \refeqn{costhetai} as a function of $s_j+s_k$}\label{fig:sig}
\end{figure}

However, when $\theta_i$ becomes more uniformly distributed as $s_j+s_k\rightarrow 0$, the first case \refeqn{costhetaia} is ill-conditioned. This motivates the second case \refeqn{costhetaib}.  When $s_j+s_k=0$, \refeqn{costhetaib} yields $\theta_i=\frac{\pi}{3}$, and therefore, there are three uniformly distributed sigma points, namely $R_0=R_i(0)$, $R_i(\frac{\pi}{3})$, and $R_i(-\frac{\pi}{3})$, along the rotation about the $Ue_i$ axis. The equation \refeqn{costhetaib} is constructed such that $\theta_i=\frac{\pi}{3}$ when $s_j+s_k=0$ and it is continuously connected to the first case \refeqn{costhetaia} at $s_j+s_k=1$. 

In short, the rotation angle $\theta_i$ are chosen such that the sigma points become uniformly distributed along the principal axis $Ue_i$ when $s_j+s_k=0$, and they are more concentrated about the mean attitude when $s_j+s_k$ becomes greater. The overall degree of concentration is also controlled by the parameter $\sigma$. Figure \ref{fig:sig} illustrates the  resulting $\cos\theta_i$ as a function of $s_j+s_k$, as well as the sigma points for selected matrix Fisher distributions for a certain selected case.

Next, we show that the first moments of the weighted sigma points $\Sigma(F)$ match with those of the matrix Fisher distribution $\mathcal{M}(F)$.

\begin{theorem}\label{thm:UT}
Consider $R\sim \mathcal{M}(F)$ and the unscented transform $\Sigma(F)$ defined in Definition \ref{def:UT}. The (weighted) first moment of the sigma points $\Sigma(F)$ is equal to that of $\mathcal{M}(F)$, i.e., 
\begin{align}
\sum_{(\tilde R,\tilde w)\in\Sigma(F)} \tilde w \tilde R =\mathrm{E}[R].\label{eqn:M1S0}
\end{align}
%Furthermore, let the proper singular value decomposition of $F$ be \refeqn{USVp}, and let $Q=U^T R V\sim\mathcal{M}(S)$. The sigma points of $\mathcal{M}(S)$, namely $\Sigma(S)$ can be constructed by rotating the sigma points of $\Sigma(F)$ as
%\begin{align}
%\Sigma(S) = (\{ \tilde Q = U^T \tilde R V\in\SO\,|\, \tilde R\in\Sigma_R(F)\},\, \Sigma_w(F)).
%\end{align}
%The three (weighted) diagonal second moments of $\Sigma(S)$ matches with those of $\mathcal{M}(S)$, i.e.,
%\begin{align}
%\sum_{(\tilde Q,\tilde w)\in\Sigma(S)} \tilde w \tilde Q_{ii}^2 = \mathrm{E}[Q_{ii}^2],
%\end{align} 
%where the subscript $ii$ denotes the $(i,i)$-th element of the corresponding matrix for $i\in\{1,2,3\}$.
\end{theorem}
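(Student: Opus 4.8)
The plan is to evaluate the left-hand side of \refeqn{M1S0} directly and show it collapses to $U\mathrm{E}[Q]V^T$, which equals $\mathrm{E}[R]$ by Lemma \ref{lem:ER}. First I would pair the two off-center sigma points on each principal axis. Since $R_i(\pm\theta_i)=U\exp(\pm\theta_i\hat e_i)V^T$ by \refeqn{Ri}, Rodrigues' formula gives $\exp(\theta_i\hat e_i)+\exp(-\theta_i\hat e_i)=2I_{3\times 3}+2(1-\cos\theta_i)\hat e_i^2$, so the odd (skew) part cancels and only an even contribution survives. Writing $R_0=UV^T$ as $UI_{3\times 3}V^T$ and factoring $U[\,\cdot\,]V^T$ out of the whole weighted sum, the bracketed matrix is
\[
w_0 I_{3\times 3}+2\sum_{i=1}^3 w_i I_{3\times 3}+2\sum_{i=1}^3 w_i(1-\cos\theta_i)\hat e_i^2 .
\]

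Next I would invoke the normalization \refeqn{W0}, i.e.\ $w_0+2\sum_i w_i=1$, to reduce the first two terms to $I_{3\times 3}$, and substitute $\hat e_i^2=e_ie_i^T-I_{3\times 3}$ so the bracket becomes the diagonal matrix $I_{3\times 3}+\sum_{i=1}^3 a_i(e_ie_i^T-I_{3\times 3})$ with $a_i=2w_i(1-\cos\theta_i)$. The key structural fact is that the factor $1-\cos\theta_i$ appearing in the denominator of the weight \refeqn{Wi} is exactly cancelled here, leaving the clean expression $a_i=\tfrac12(m_i-m_j-m_k+1)$ for $(i,j,k)\in\mathcal{I}$, where $m_i=\tfrac{1}{c(S)}\deriv{c(S)}{s_i}=\mathrm{E}[Q_{ii}]$ by \refeqn{M1}. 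The $(l,l)$ entry of the bracket is then $1+a_l-\sum_{i=1}^3 a_i$, and my goal is to show this equals $m_l$.

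The crux is the combinatorial cancellation in this last step. I would use the fact that as $(i,j,k)$ runs over the three circular shifts in $\mathcal{I}$ each index occupies each of the three slots exactly once, so $\sum_{\mathcal{I}}(m_i-m_j-m_k)=-(m_1+m_2+m_3)$ and hence $\sum_i a_i=\tfrac12\bigl(3-\sum_l m_l\bigr)$. Substituting this together with the complementary-pair identity $m_j+m_k=\sum_l m_l-m_l$ into $1+a_l-\sum_i a_i$, the terms involving $\sum_l m_l$ cancel identically and the entry reduces exactly to $m_l$. Therefore the bracket is the diagonal matrix $\mathrm{E}[Q]$, and the weighted sum equals $U\mathrm{E}[Q]V^T=\mathrm{E}[R]$.

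I expect the only delicate point to be this final cancellation: the specific sign pattern $\deriv{c(S)}{s_i}-\deriv{c(S)}{s_j}-\deriv{c(S)}{s_k}$ in the weight \refeqn{Wi} is engineered precisely so that the cyclic sum over $\mathcal{I}$ and the complementary-pair identity conspire to remove all extraneous terms and recover $m_l$ on the diagonal. Everything else is a routine application of Rodrigues' formula and the normalization \refeqn{W0}. It is worth remarking that no appeal to the explicit angles \refeqn{costhetai} or the admissible range \refeqn{sigma} of $\sigma$ is needed; the first-moment matching holds for \emph{any} admissible choice of $\theta_i$, since those only enter through the cancelled factor $1-\cos\theta_i$.
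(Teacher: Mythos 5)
Your moment computation is correct and follows essentially the same route as the paper: pair $R_i(\theta_i)$ with $R_i(-\theta_i)$, apply Rodrigues' formula so only the even part $2\{I_{3\times 3}+(1-\cos\theta_i)\hat e_i^2\}$ survives, use the normalization \refeqn{W0}, and observe that the factor $1-\cos\theta_i$ in the weights \refeqn{Wi} cancels so that the diagonal entries collapse to $\frac{1}{c(S)}\deriv{c(S)}{s_i}=\mathrm{E}[Q_{ii}]$, giving $U\mathrm{E}[Q]V^T=\mathrm{E}[R]$ via Lemma \ref{lem:ER}. Your bookkeeping (cyclic sum $\sum_{\mathcal{I}}(m_i-m_j-m_k)=-\sum_l m_l$ plus the complementary-pair identity) differs only cosmetically from the paper's direct per-entry evaluation of $D_{ii}=w_0+2(w_i+\cos\theta_j w_j+\cos\theta_k w_k)$.

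The one substantive omission is the first half of the paper's proof, which you explicitly dismiss: the verification that $\theta_i$ is well defined, i.e., that the right-hand side of \refeqn{costhetai} lies strictly in $(-1,1)$ under the constraint \refeqn{sigma} on $\sigma$, using the bound $-2s_1-s_2+s_3\leq \log c(S)-s_i\leq s_j+s_k$ from property (iii) of Theorem \ref{thm:C}. You are right that the moment-matching algebra holds for \emph{any} $\theta_i\in(0,\pi)$, but the theorem is stated for the transform of Definition \ref{def:UT}, whose sigma points and weights exist only if \refeqn{costhetai} admits a solution with $\cos\theta_i\neq \pm 1$ — in particular, the weights \refeqn{Wi} divide by $1-\cos\theta_i$. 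Showing that admissible angles actually exist for the specific prescription \refeqn{costhetai} is therefore a logically necessary component of the theorem, not a dispensable detail; your proof should either include that verification or state explicitly that it presupposes well-definedness of $\Sigma(F)$.
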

\begin{proof}
We first show that the presented unscented transform, especially $\theta_i$ is well defined via \refeqn{costhetai}. From the property (iii) of Theorem \ref{thm:C}, and since $-s_1\leq -s_i$,
\begin{align}
 -2 s_1-s_2+s_3 \leq \log c(S) -s_i \leq s_j+s_k,\label{eqn:logcsi}
\end{align}
for any $(i,j,k)\in\mathcal{I}$, where the inequalities become strict when $S\neq 0_{3\times 3}$. 

Using this, an upper bound of \refeqn{costhetaia} is given by
\begin{align*}
\sigma + (1-\sigma)\frac{\log c(S)-s_i}{s_j+s_k}< \sigma + (1-\sigma)\frac{s_j+s_k}{s_j+s_k}=  1,
\end{align*}
and its lower bound is 
\begin{align*}
\sigma + (1-\sigma)\frac{\log c(S)-s_i}{s_j+s_k}
& > \sigma + (1-\sigma)\frac{-2s_1-s_2+s_3}{s_2+s_3},
\end{align*}
which is greater than $-1$ as $\frac{s_1-s_3}{s_1+s_2} < \sigma < 1$. Therefore, the right hand side of \refeqn{costhetaia} belongs to $(-1,1)$. 

Similarly, from \refeqn{logcsi}, an upper bound of \refeqn{costhetaib} is
\begin{align*}
\{\sigma & + (1-\sigma)(s_j+s_k)+\frac{1}{2}\}(s_j+s_k)-\frac{1}{2},
%\frac{3}{2}(s_j+s_k)-\frac{1}{2} < 1
\end{align*}
which is strictly less than 1 as $0< \sigma <1$ and $0\leq s_j+s_k< 1$. A lower bound of \refeqn{costhetaib} is 
\begin{align*}
\{\sigma + (1-\sigma)(-2s_1-s_2+s_3)+\frac{1}{2}\}(s_j+s_k)-\frac{1}{2},
\end{align*}
which is a linear function of $\sigma$. When $\sigma =1$, this reduces to $\frac{3}{2}(s_j+s_k)-\frac{1}{2} < 1$, and when $\sigma = \frac{2s_1+s_2-s_3-1}{2s_1+s_2-s_3+1}$, this is simplified into $-\frac{1}{2}(s_j+s_k)-\frac{1}{2} > -1$ as $s_j+s_k< 1$. In short, the absolute value of the right hand sides of \refeqn{costhetai} is strictly less than 1, and therefore $\theta_i$ is well defined. 

Next, the first moment of the sigma points are given by
\begin{align*}
\sum_{(\tilde R,\tilde w)\in\Sigma(F)} \tilde w \tilde R=w_0UV^T + \sum_{i=1}^3 w_i\{R_i(\theta_i)+R_i(-\theta_i)\}.
\end{align*}
We have $R_i(\theta_i)+R_i(-\theta_i)=U\{ \exp(\theta_i\hat e_i)+\exp(-\theta_i\hat e_i)\}\}V^T = 2U\{I_{3\times 3}+(1-\cos\theta_i)\hat e_i^2\}V^T$ from \refeqn{Ri} and Rodrigues' formula. Substituting $\hat e_i^2 = e_ie_i^T - I_{3\times 3}$ and rearranging, the first moment reduces to
\begin{align}
\sum_{(\tilde R,\tilde w)\in\Sigma(F)} \tilde w \tilde R= UDV^T,\label{eqn:M1S}
\end{align}
where $D\in\Re^{3\times 3}$ is a diagonal matrix, whose $i$-th diagonal element for $(i,j,k)\in\mathcal{I}$ is given by
\begin{align*}
D_{ii} = w_0 + 2(w_i + \cos\theta_jw_j + \cos\theta_k w_k). 
\end{align*}
Substituting \refeqn{W0} and \refeqn{Wi}, this reduces to 
\begin{align*}
D_{ii} & = 1 +2(\cos\theta_j-1)w_j +2(\cos\theta_k-1)w_k\\
& = 1 -\frac{1}{2}\braces{\frac{1}{c(S)}\parenth{\deriv{c(S)}{s_j}-\deriv{c(S)}{s_k}-\deriv{c(S)}{s_i}}+1}\nonumber\\
&\quad -\frac{1}{2}\braces{\frac{1}{c(S)}\parenth{\deriv{c(S)}{s_k}-\deriv{c(S)}{s_i}-\deriv{c(S)}{s_j}}+1}\\
& = \frac{1}{c(S)}\deriv{c(S)}{s_i}.
\end{align*}
Therefore, $D=\mathrm{E}[Q]$ according to \refeqn{M1}, and the first moment of the sigma points given at \refeqn{M1S} is equivalent to the first moment of $\mathcal{M}(R)$ at \refeqn{ER}.
%%Next, the $i$-th diagonal element of the second moment of $\Sigma(S)$ is 
%\begin{align*}
%\sum_{(\tilde Q,\tilde w)\in\Sigma(S)} \tilde w \tilde Q_{ii}^2
%= w_0 + 2(w_i + \cos^2\theta_jw_j+ \cos^2\theta_kw_k),
%\end{align*}
%for $(i,j,k)\in\mathcal{I}$. Similarly, substituting \refeqn{W0}, \refeqn{Wi}, and \refeqn{theta}, this reduces to $-\frac{1}{2}(\beta_j+\beta_k)$, which is equal to $\frac{1}{c(S)}\frac{\partial^2 c(S)}{\partial s_i^2}$ from \refeqn{beta}. This is equivalent to $\mathrm{E}[Q_{ii}]$ according to \refeqn{M2}. 
\end{proof}

\begin{remark}{(Inverse of Unscented Transform)}\label{rem:UT}
Theorem \ref{thm:UT} yields the inverse of the proposed unscented transform to obtain the matrix Fisher distribution $\mathcal{M}(F)$ for given sigma points and weights $\Sigma(F)$. More explicitly, the first moment $\mathrm{E}[R]$ is computed as a weighted sum as \refeqn{M1S0}. The remaining procedure to obtain $F$ is similar to the maximum log-likelihood estimation summarized in Theorem \ref{thm:MLE}: perform the proper singular value decomposition of $\mathrm{E}[R]$ to obtain $U,D,V$ from \refeqn{UDV}; solve \refeqn{MLEs} for $S$; the matrix parameter $F$ is constructed by $F=USV^T$ from \refeqn{MLEF}.
\end{remark}

\subsection{Unscented Attitude Estimation}

Based on the proposed unscented transform, we construct a Bayesian attitude estimator as follows. In unscented filtering in $\Re^n$, the effects of the process noise is handled either by augmenting the state vector to include the noise, or by adding the covariance of the noise to the propagated covariance of the state. For the stochastic differential equation \refeqn{SDE} considered in this paper, the process noise corresponds to the last term $HdW$. Augmenting the state of \refeqn{SDE} into $(R, W)\in\SO\times\Re^3$ and constructing an unscented transform on $\SO\times\Re^3$ is beyond the scope of this paper.

\begin{table}
\caption{Unscented Attitude Uncertainty Propagation}\label{tab:UAP}
\begin{algorithmic}[1]
\algrule[0.8pt]
\Procedure{$F_{k+1}$=Unscented Propagation}{$F_k$, $\Omega_k$, $H_k$}
\algrule
	\State Compute the set of 7 sigma points and weights $\Sigma(F_k)$ from Definition \ref{def:UT}
	\State Propagate each sigma point with $R_{i_{k+1}}=R_{i_k}\exp(h\hat\Omega_k)$
	\State Compute the first moment of the propagated attitudes $E[R_{k+1}]$ via \refeqn{M1S0}
	\State Update the first moment according to \refeqn{ERkp} with $G_k=hH_kH_k^T$ by \[E[R_{k+1}]=E[R_{k+1}]\braces{I_{3\times 3}+\frac{h}{2}(-\trs{G_k}I_{3\times 3} + G_k)}\] 
	\State Perform the singular value decomposition of $\mathrm{E}[R_{k+1}]$ to obtain $U,D,V$ from \refeqn{UDV}
	\State Solve \refeqn{MLEs} for $S$ with $D$
	\State Compute $F_{k+1}$ with $U,S,V$ from \refeqn{MLEF}
\EndProcedure
\algrule[0.8pt]
\end{algorithmic}
\end{table}

Instead, the effects of $HdW$ are incorporated via \refeqn{ERkp}. More precisely, consider $R_k\sim \mathcal{M}(F_k)$ for a given $F_k\in\Re^{3\times 3}$. The seven sigma points are constructed according to Definition \ref{def:UT}, and each one is propagated along \refeqn{SDE} with $HdW=0$ via $R_{i_{k+1}}=R_{i_k} \exp (h\hat\Omega_k)$. The weighted first moment of the propagated sigma points is obtained by \refeqn{M1S}, and it is multiplied by $I_{3\times 3}+\frac{h}{2}(-\trs{G_k}I_{3\times 3} + G_k)$ to obtain $\mathrm{E}[R_{k+1}]$. The remaining procedure to construct the propagated matrix Fisher distribution $\mathcal{M}(F_{k+1})$ is identical to the steps 13 through 15 of Table \ref{tab:FAE}. This corresponding propagation scheme based on the proposed unscented transform is summarized in Table \ref{tab:UAP}.

An unscented Bayesian attitude estimation scheme is formulated, by replacing the propagation procedure of the first-order attitude estimator introduced in the prior section with the unscented propagation of the current section (more precisely, by substituting the steps 10 through 16 of Table \ref{tab:FAE} with Table \ref{tab:UAP}). There is no need to introduce a new correction procedure with the unscented tranform, as the results of Theorem \ref{thm:MU} is exact and complete.

\section{Numerical Examples}\label{sec:NE}

\begin{figure}
\centerline{
	\subfigure[Angular velocity: true (red), measured (blue) ($\mathrm{rad/s}$)]{
		\includegraphics[width=0.45\columnwidth]{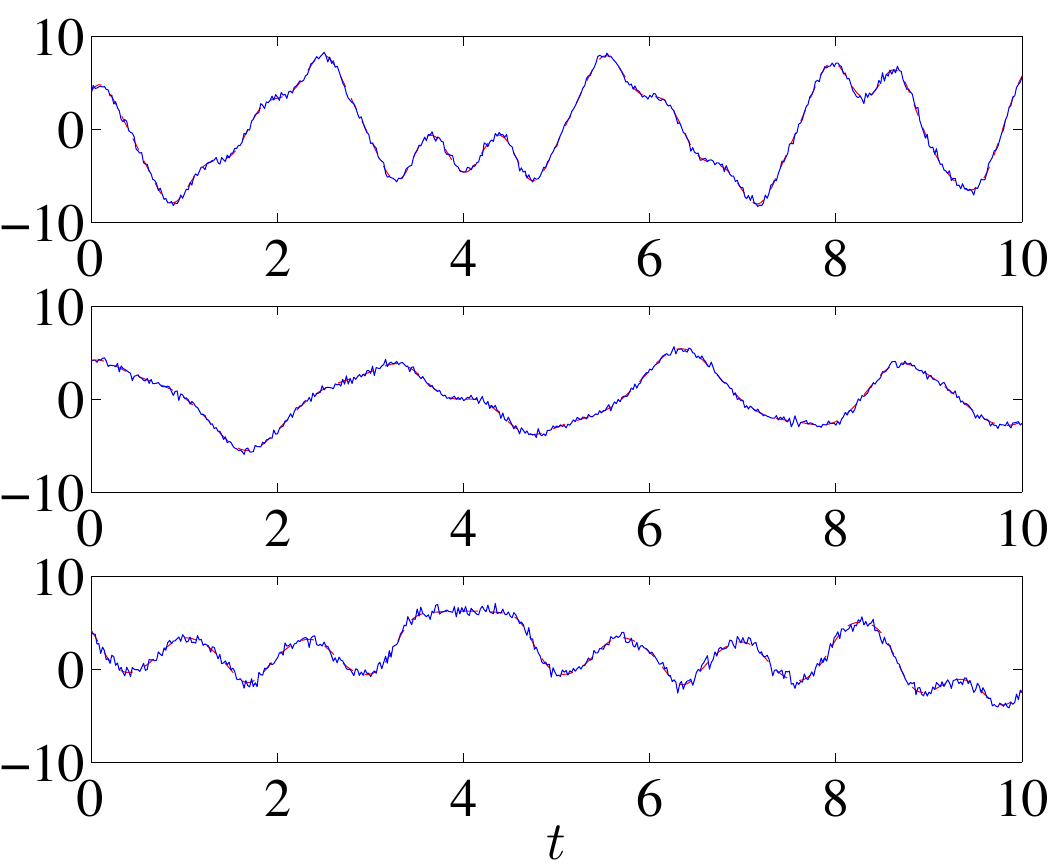}\label{fig:W}}
	\hfill
	\subfigure[Attitude measurement error (deg)]{
		\includegraphics[width=0.45\columnwidth]{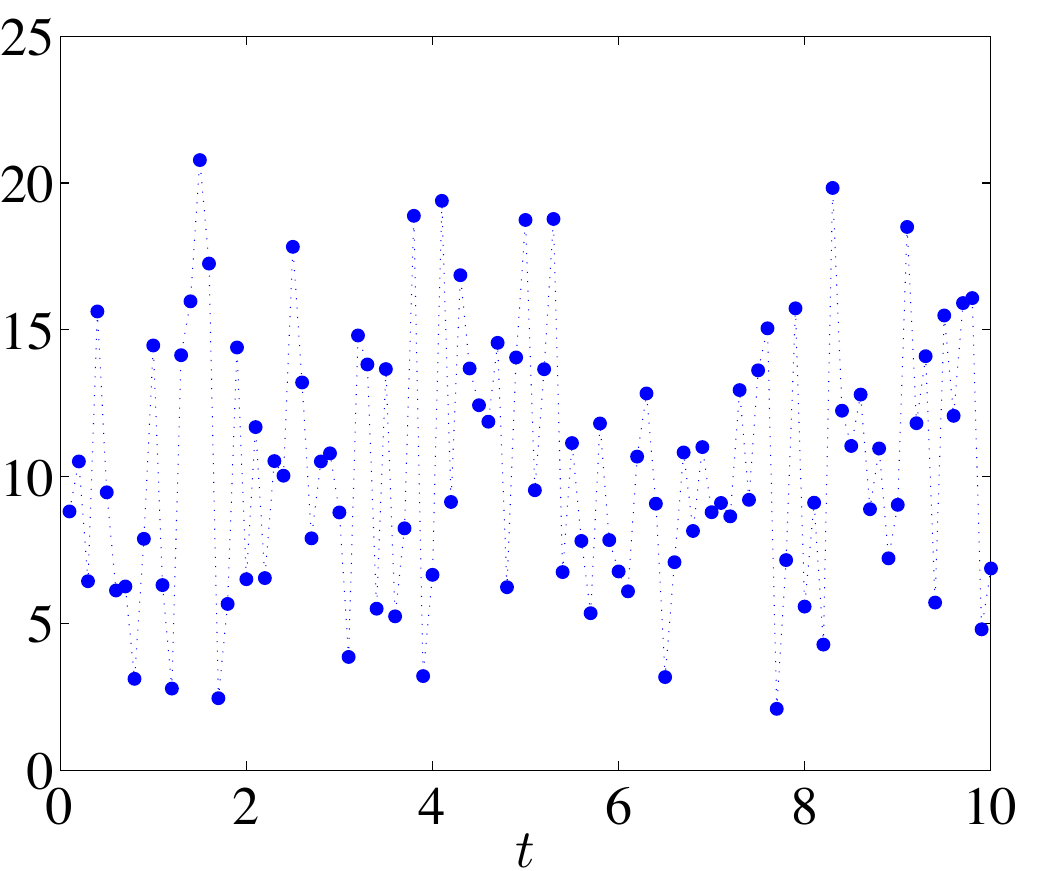}\label{fig:R_mea_err}}
}
\caption{Angular velocity trajectories and measurement errors}\label{fig:err}
\end{figure}

We apply the proposed attitude estimators for a complex attitude dynamics of a rigid body  acting under a uniform gravity, referred to as a 3D pendulum. It is shown that a 3D pendulum may exhibit highly irregular attitude maneuvers, and we adopt a particular nontrivial maneuver presented in~\cite{LeeChaPICDC07} as the true attitude and angular velocity for the numerical example considered in this section. The initial attitude and the angular velocity are $R_{true}(0)=I_{3\times 3}$ and $\Omega_{true}(0)=4.14\times [1,\,1,\,1]\,\mathrm{rad/s}$. 

It is assumed that there is a single full attitude sensor and a gyro, which measure the attitude and the angular velocity at the rates of $10\,\mathrm{Hz}$, and $50\,\mathrm{Hz}$, respectively. The matrix parameter for the attitude measurement error is chosen as $F_Z= \mathrm{diag}[40,50,35]$, and the rotation matrix $Z_i$ representing the attitude measurement error is sampled with $Z_i|R\sim \mathcal{M}(F_Z)$, according to the rejection method described in~\cite{KenGan13}.  The resulting mean attitude measurement error is $10.45^\circ$. The measurement error for the angular velocity is defined by $H=\mathrm{diag}[1.8,\, 1.6,\, 2.4]$, and the mean length of the angular velocity measurement error is $0.45\,\mathrm{rad/s}$. The trajectories of the angular velocity and the measurement errors are illustrated in Figure~\ref{fig:err}.

To implement the proposed estimators, \refeqn{MLEs} should be solved for $S$. Newton's method with a line search is used with the gradient computed by \refeqn{ddcSii} and \refeqn{ddcS}. A software package composed with Matlab is available in~\cite{LeeGit17}.

% TAC16_table.m
%    6.3277   95.8982  119.7883  124.8887
%    6.3280   96.1456  119.7017  124.9828
%   10.1897
%    8.7003   35.7463   37.5663   39.1096
%    7.9109   50.3648   53.5410   54.2899
    
\begin{table}
\caption{Averaged values for steady-state responses after $t=0.5$}\label{tab:EST}
\begin{tabular}{cccccc}\toprule
        &             &  est. err.        & $s_2+s_3$ & $s_3+s_1$ & $s_1+s_2$ \\\midrule
Case I  & First Order &    $6.32^\circ$ & $95.89$ & $119.78$ & $124.88$\\
	    & Unscented   &    $6.32^\circ$ & $96.14$ & $119.70$ & $124.98$\\
		& MEKF~\cite{CraMarJGCD07}		  &  $10.18^\circ$ & N/A & N/A & N/A\\	    
\midrule
Case II & First Order &   $8.70^\circ$ &  $35.74$ &  $37.56$ &  $39.10$\\
	    & Unscented   &    $7.91^\circ$ &  $50.36$ &  $53.54$ &  $54.28$\\\bottomrule
\end{tabular}
\end{table}

\subsection{Case I: Large initial Estimation  Error}

We consider two cases depending on the estimate of the initial attitude. For Case I, the initial matrix parameter is
\begin{align*}
F(0) = 100 \exp (\pi\hat e_1),
\end{align*}
where the initial mean attitude is $\mathrm{M} (0) = \exp (\pi\hat e_1)$, which corresponds to $180^\circ$ rotation of $R_{true}(0)$ about the first body-fixed axis. It is highly concentrated, since $S(0)=100I_{3\times 3}$ is relatively large. As such, this represents the case where the estimator is falsely too confident about the completely incorrect attitude. 

The results of the first-order attitude estimator proposed in Section \ref{sec:FAE} and the unscented estimator presented in Section \ref{sec:UAE}  are summarized in Table \ref{tab:EST} and Figure \ref{fig:ES_0}, where the attitude estimation error is presented, and the degree of uncertainty in the estimates are measured via $\frac{1}{s_i+s_j}$. The results of multiplicative extended Kalman filter (MEKF)~\cite{CraMarJGCD07} are also presented in Table \ref{tab:EST}. 

While the unscented filter exhibits slightly smaller estimation errors and faster convergence, there is no meaningful difference from the first-order estimator. For both estimators, the attitude estimation error rapidly reduces to below $4^\circ$ from the initial error of $180^\circ$ after three attitude measurements at $t=0.3$, and the mean attitude error afterward is about $9^\circ$. The uncertainties in the attitude increase until $t=0.3$ since the measurements strongly conflict with the initial estimate, but they decrease quickly after the attitude estimate converges. Both approaches yield smaller estimation error than MEKF. 

\begin{figure}
\centerline{
	\subfigure[Attitude estimation error ($\mathrm{deg}$)]{\hspace*{0.02\columnwidth}
		\includegraphics[height=0.36\columnwidth]{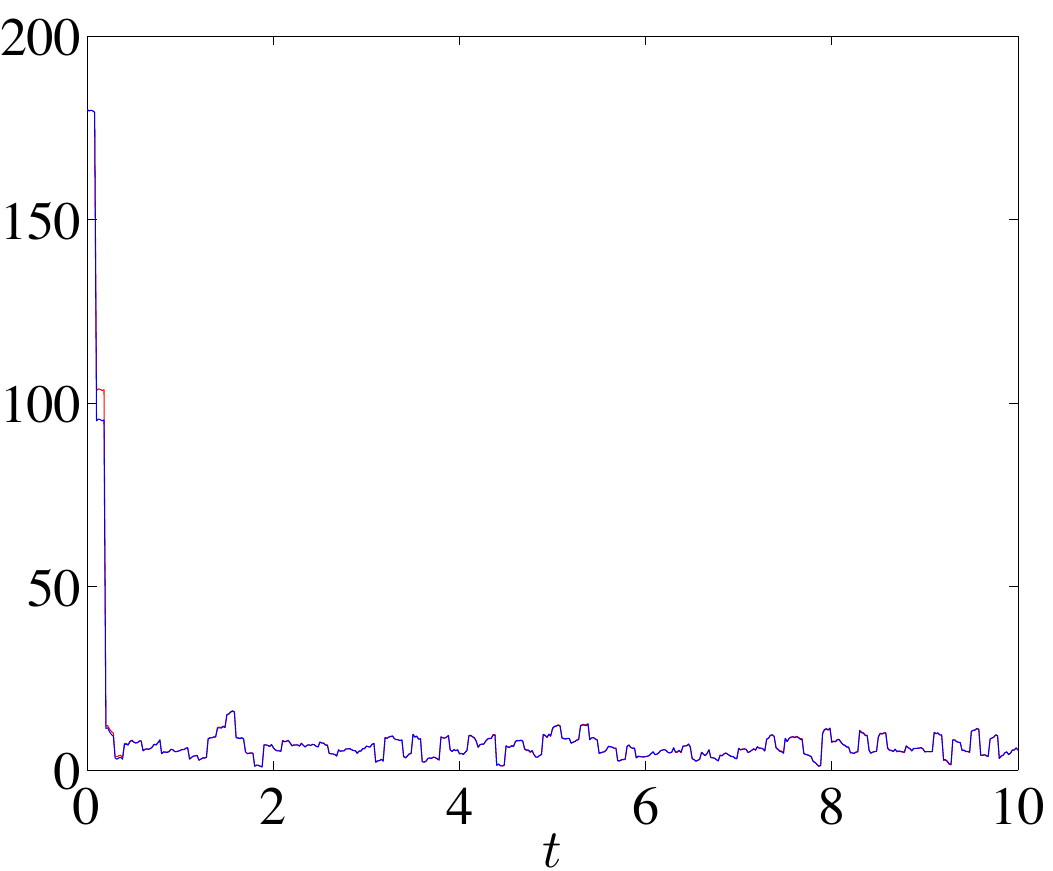}\label{fig:R_est_err_0}\hspace*{0.02\columnwidth}}
	\hfill
	\subfigure[Uncertainty measured by $\frac{1}{s_i+s_j}$]{\hspace*{0.02\columnwidth}
		\includegraphics[height=0.36\columnwidth]{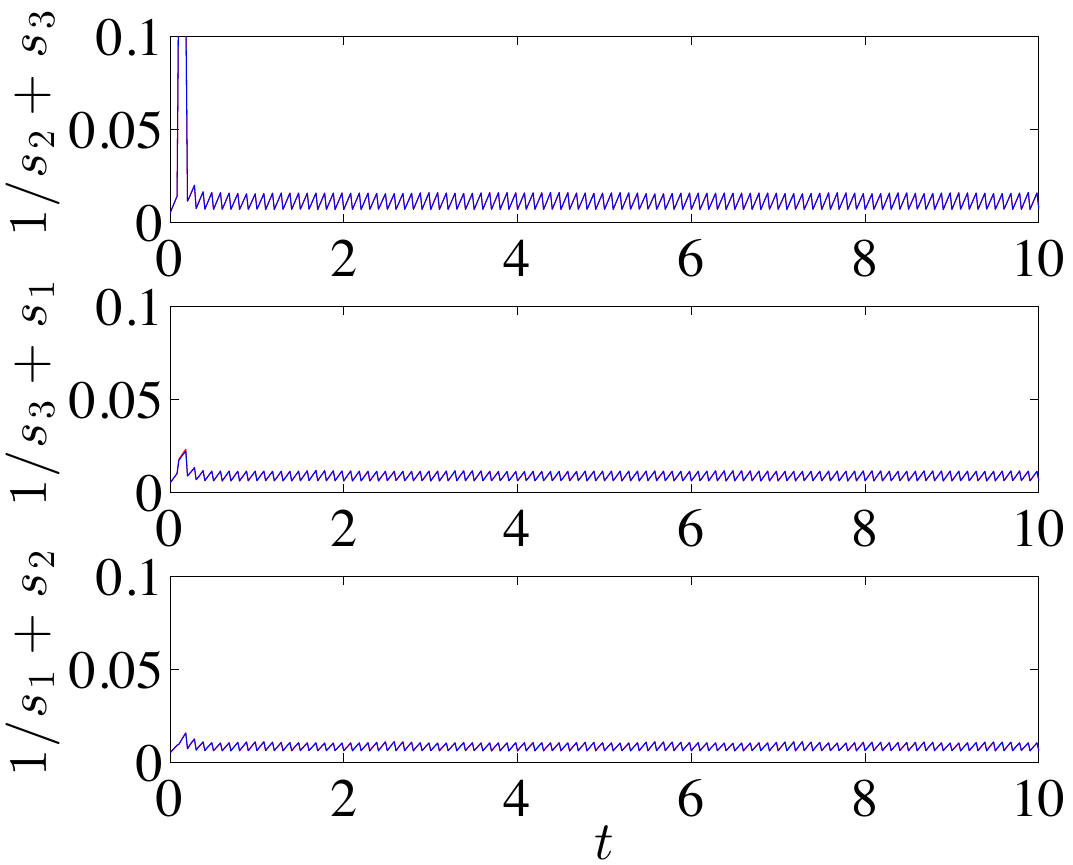}\label{fig:s_0}\hspace*{0.02\columnwidth}}
}
\caption{Case I: large initial error: first-order (red), unscented (blue)}\label{fig:ES_0}
\end{figure}
\begin{figure}
\centerline{
	\subfigure[$t=0$, $p_{\max}=1.45\times 10^4$]{\hspace*{0.065\columnwidth}
		\includegraphics[height=0.31\columnwidth]{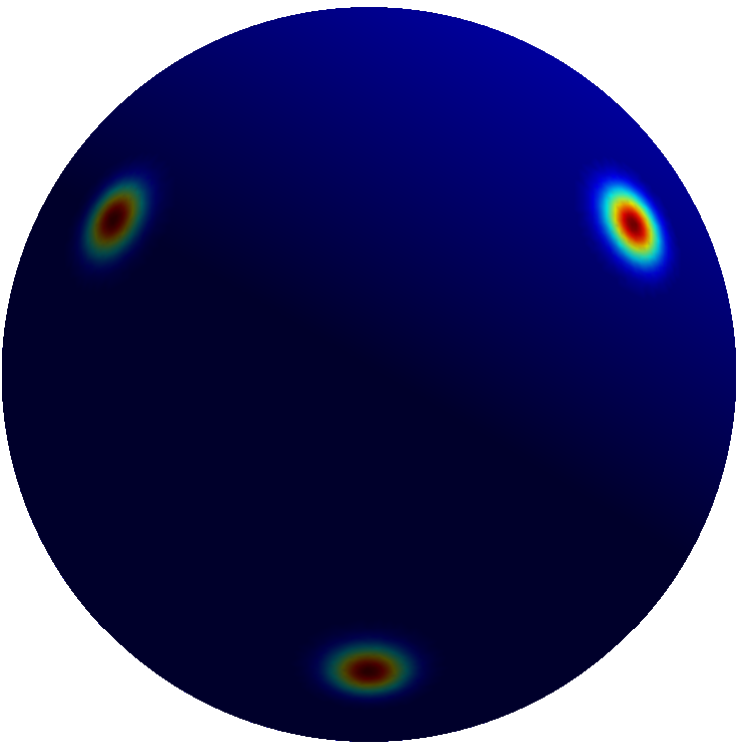}\label{fig:F0_1}\hspace*{0.065\columnwidth}}
	\hfill
	\subfigure[$t=0.08$, $p_{\max}=4.38\times 10^3$]{\hspace*{0.065\columnwidth}
		\includegraphics[height=0.31\columnwidth]{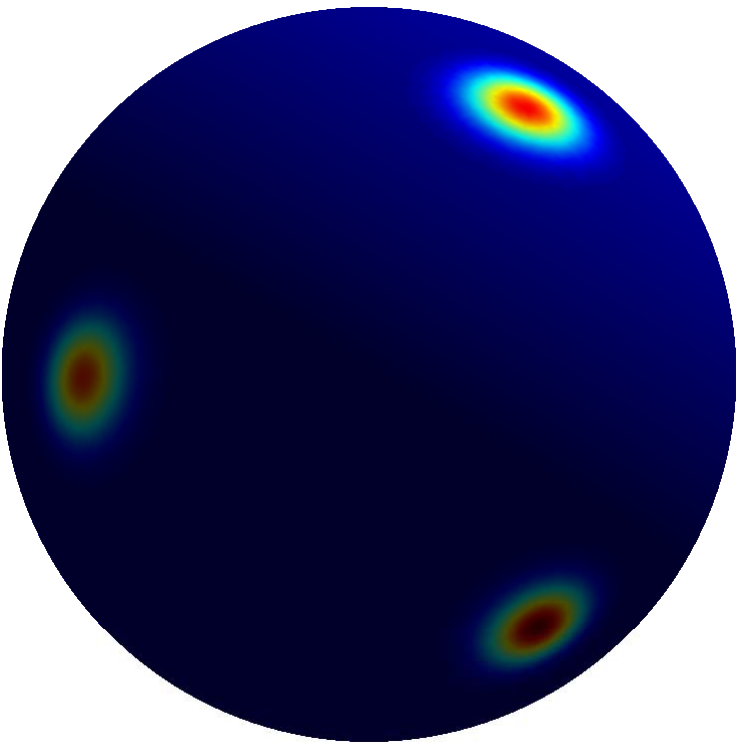}\label{fig:F0_5}\hspace*{0.065\columnwidth}}
}
\centerline{
	\subfigure[$t=0.1$, $p_{\max}=1.08\times 10^{3}$]{\hspace*{0.065\columnwidth}
		\includegraphics[height=0.31\columnwidth]{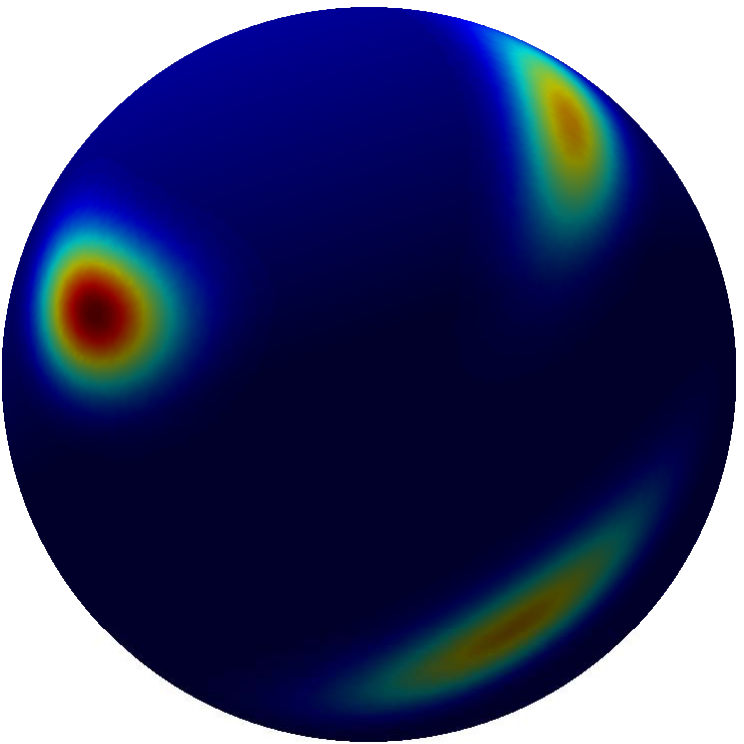}\label{fig:F0_6}\hspace*{0.065\columnwidth}}
	\hfill
	\subfigure[$t=0.3$, $p_{\max}=8.73\times 10^3$]{\hspace*{0.065\columnwidth}
		\includegraphics[height=0.31\columnwidth]{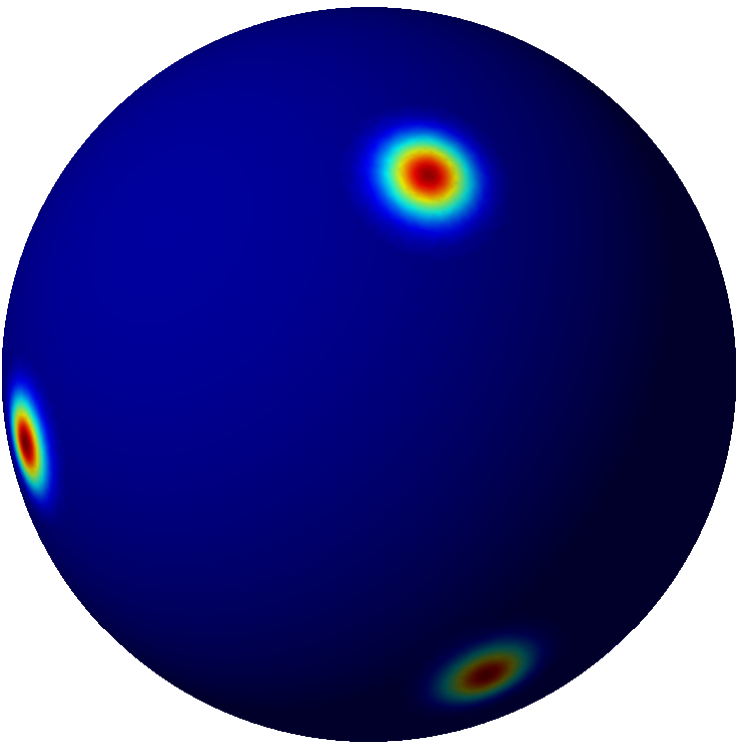}\label{fig:F0_16}\hspace*{0.065\columnwidth}}
}
\centerline{
	\subfigure[$t=1$, $p_{\max}=9.61\times 10^3$]{\hspace*{0.065\columnwidth}
		\includegraphics[height=0.31\columnwidth]{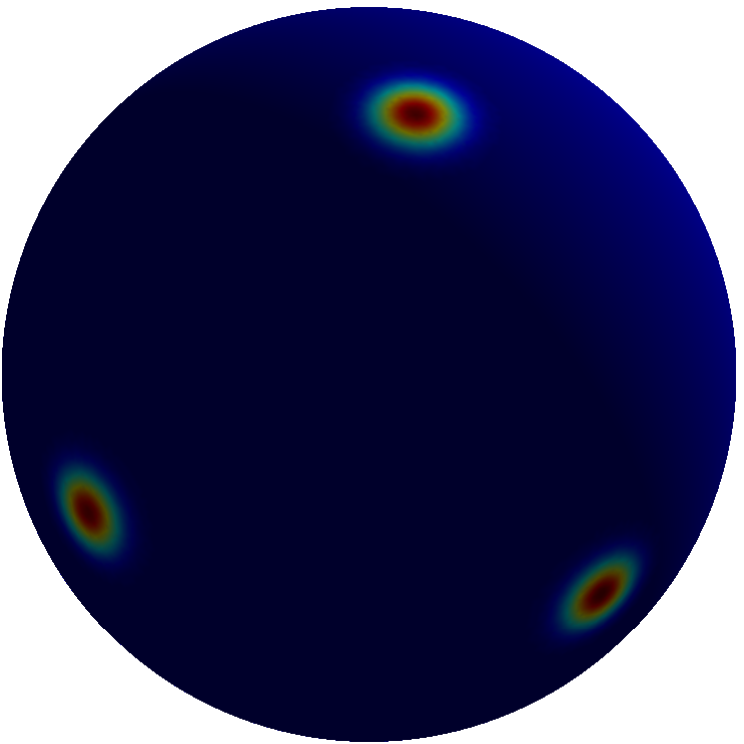}\label{fig:F0_51}\hspace*{0.065\columnwidth}}
	\hfill
	\subfigure[$t=10$, $p_{\max}=9.82\times 10^3$]{\hspace*{0.065\columnwidth}
		\includegraphics[height=0.31\columnwidth]{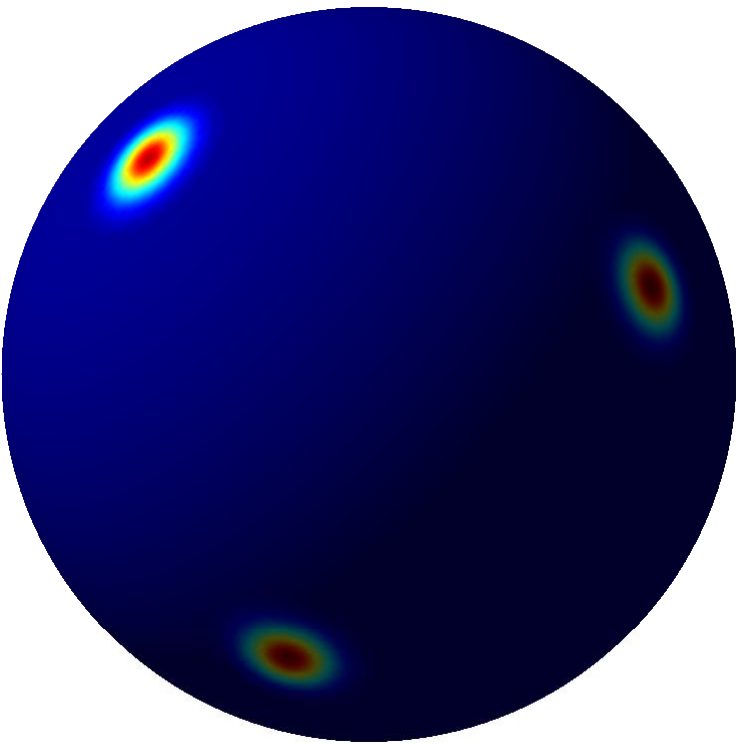}\label{fig:F0_501}\hspace*{0.065\columnwidth}}
}
\caption{Case I: visualizations of $\mathcal{M}(F_k)$ for the first-order filter}\label{fig:visES_0}
\end{figure}

These can be also observed from the visualization of $\mathcal{M}(F_k)$ of the first-order filter  in \reffig{visES_0}. Since the color shading of the figures is reinitialized in each figure, the value of the maximum probability density, corresponding to the dark red color, is specified as well. Initially, the probability distribution is highly concentrated, and it becomes dispersed a little at $t=0.08$ due to the angular velocity measurement noise. But, after the initial attitude measurement is incorporated at $t=0.1$, the probability distributions for the second axis and the third axis become dispersed noticeably due to the conflict between the belief and the measurement. This is continued until $t=0.3$. But, later at $t=1$ and $t=10$, the estimated attitude distribution becomes concentrated about the true attitude.

\begin{figure}
\centerline{
	\subfigure[Attitude estimation error ($\mathrm{deg}$)]{\hspace*{0.02\columnwidth}
		\includegraphics[height=0.36\columnwidth]{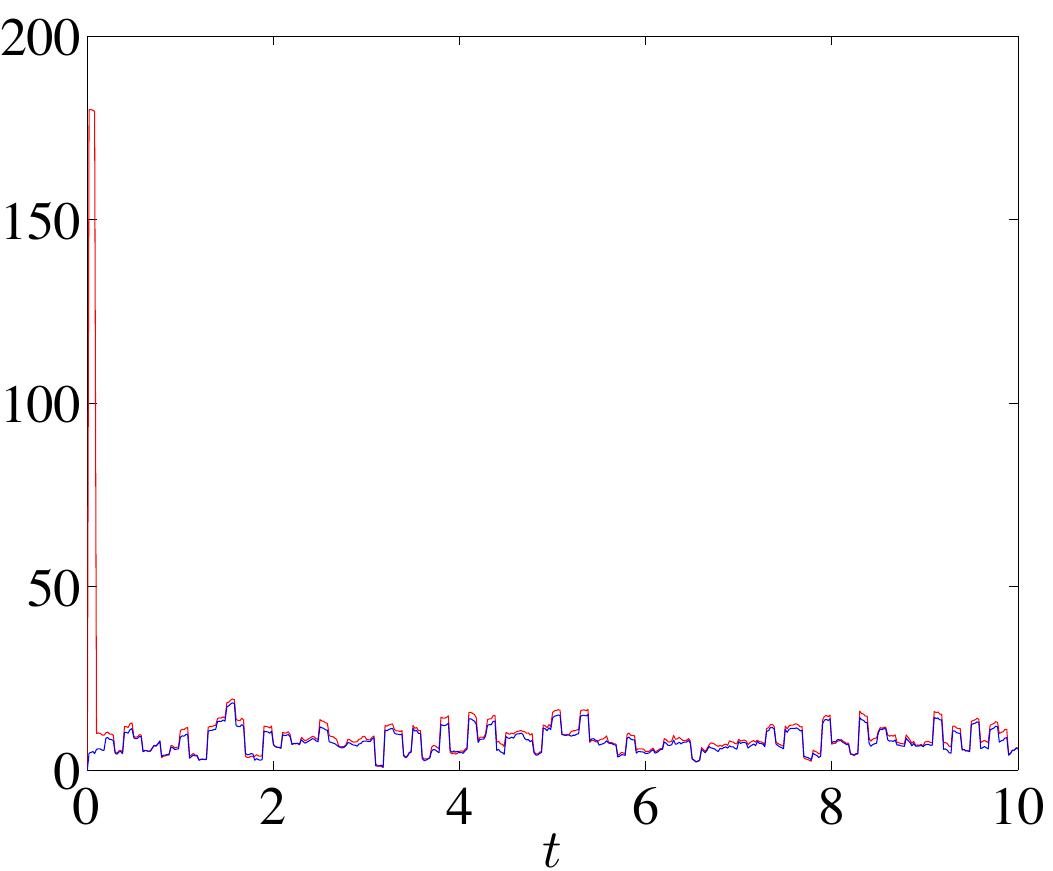}\label{fig:R_est_err}\hspace*{0.02\columnwidth}}
	\hfill
	\subfigure[Uncertainty measured by $\frac{1}{s_i+s_j}$]{\hspace*{0.02\columnwidth}
		\includegraphics[height=0.36\columnwidth]{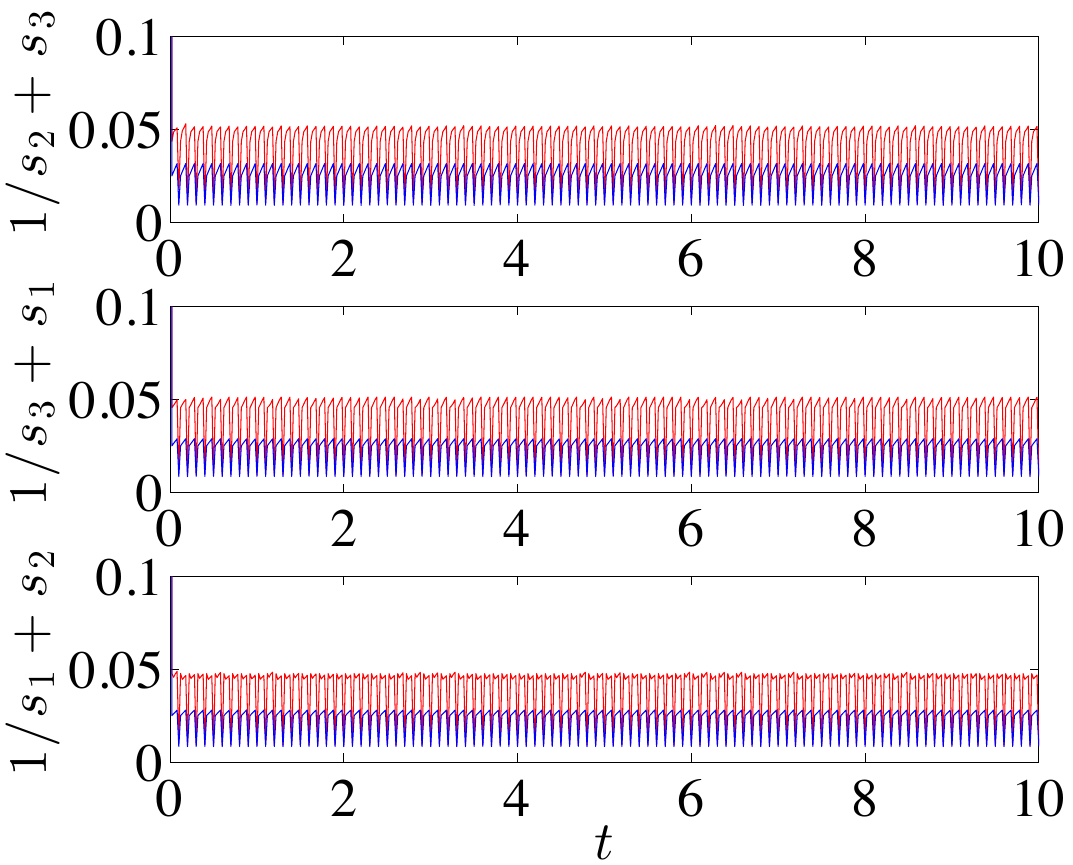}\label{fig:s}\hspace*{0.02\columnwidth}}
}
\caption{Case II: large initial uncertainty: first-order (red), unscented (blue)}\label{fig:ES_1}
\end{figure}
\begin{figure}
\centerline{
	\subfigure[$t=0$, $p_{\max}=1.00\times 10^0$]{\hspace*{0.065\columnwidth}
		\includegraphics[height=0.31\columnwidth]{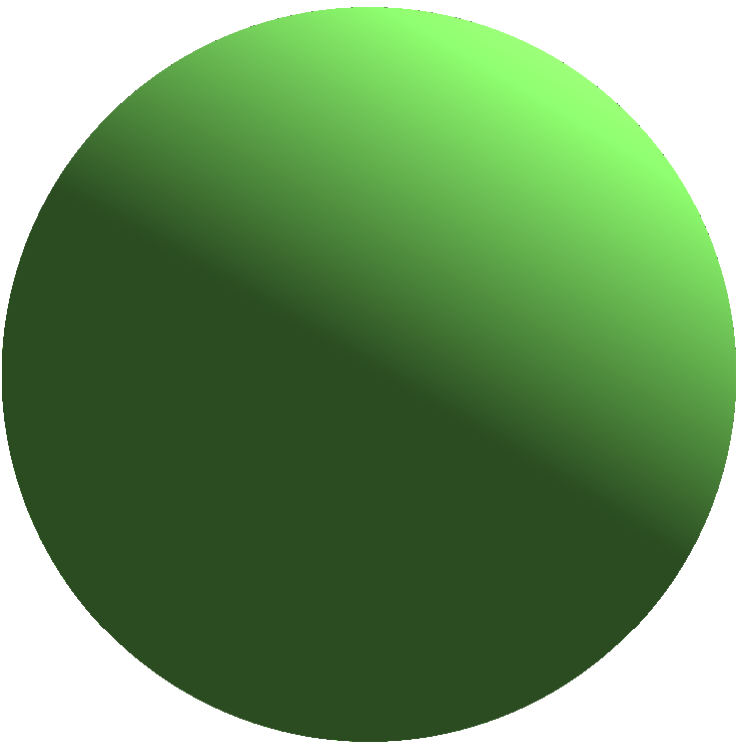}\label{fig:F_1}\hspace*{0.065\columnwidth}}
	\hfill
	\subfigure[$t=0.08$, $p_{\max}=9.83\times 10^2$]{\hspace*{0.065\columnwidth}
		\includegraphics[height=0.31\columnwidth]{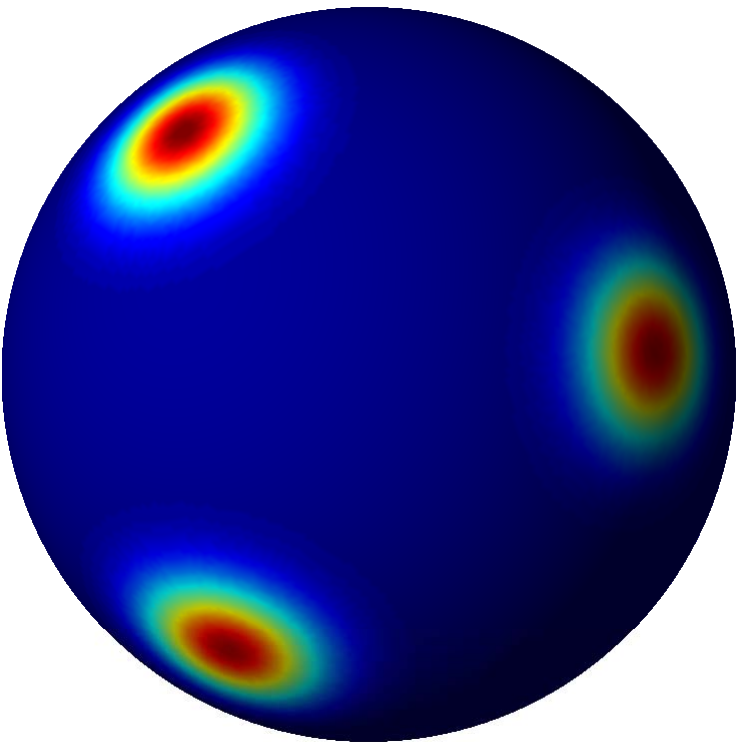}\label{fig:F_5}\hspace*{0.065\columnwidth}}
}
\centerline{
	\subfigure[$t=0.1$, $p_{\max}=6.19\times 10^{3}$]{\hspace*{0.065\columnwidth}
		\includegraphics[height=0.31\columnwidth]{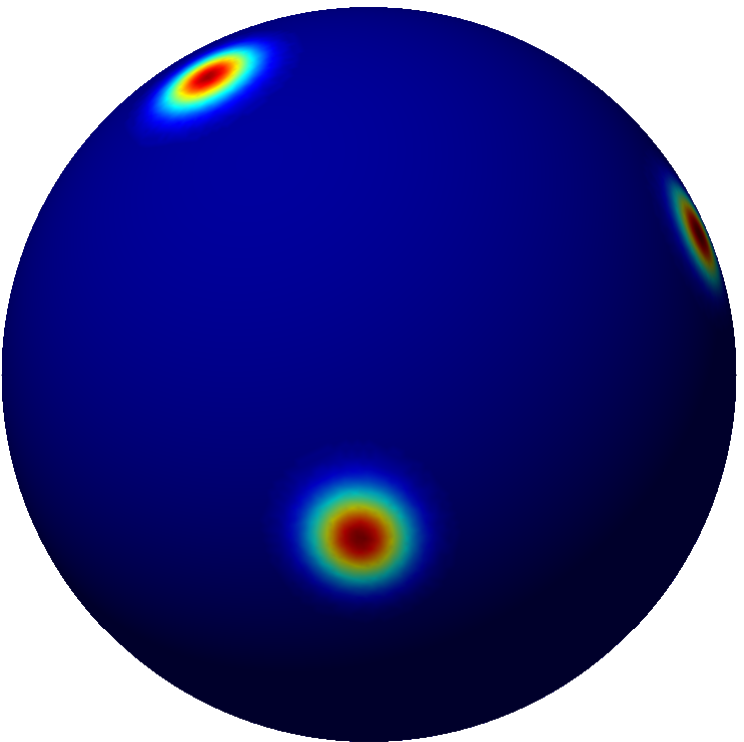}\label{fig:F_6}\hspace*{0.065\columnwidth}}
	\hfill
	\subfigure[$t=0.3$, $p_{\max}=9.83\times 10^3$]{\hspace*{0.065\columnwidth}
		\includegraphics[height=0.31\columnwidth]{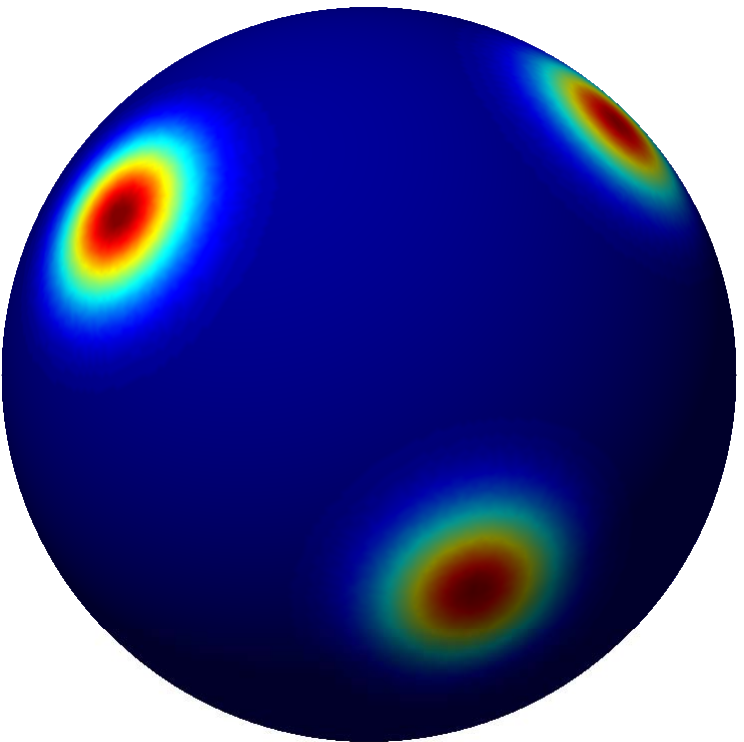}\label{fig:F_16}\hspace*{0.065\columnwidth}}
}
\centerline{
	\subfigure[$t=1$, $p_{\max}=6.21\times 10^3$]{\hspace*{0.065\columnwidth}
		\includegraphics[height=0.31\columnwidth]{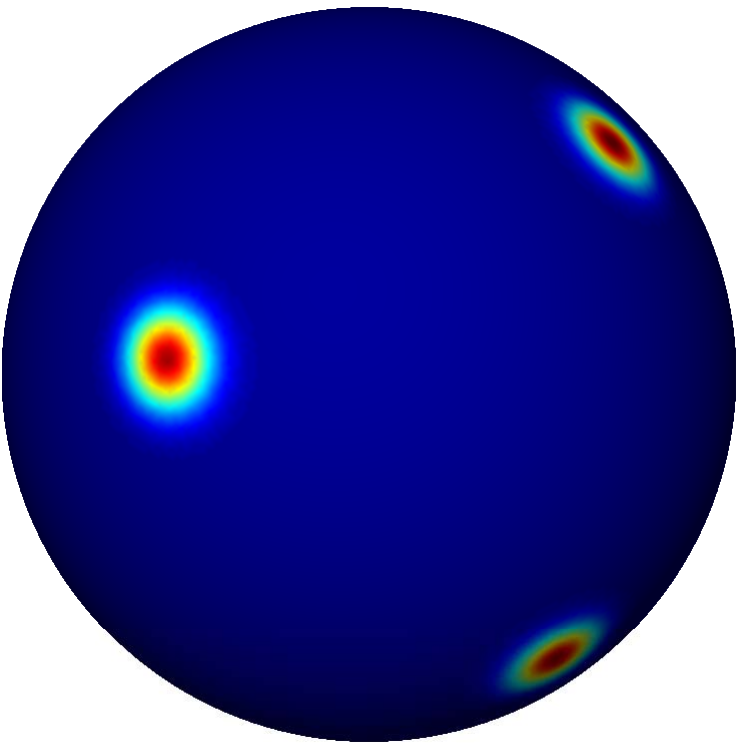}\label{fig:F_51}\hspace*{0.065\columnwidth}}
	\hfill
	\subfigure[$t=10$, $p_{\max}=6.20\times 10^3$]{\hspace*{0.065\columnwidth}
		\includegraphics[height=0.31\columnwidth]{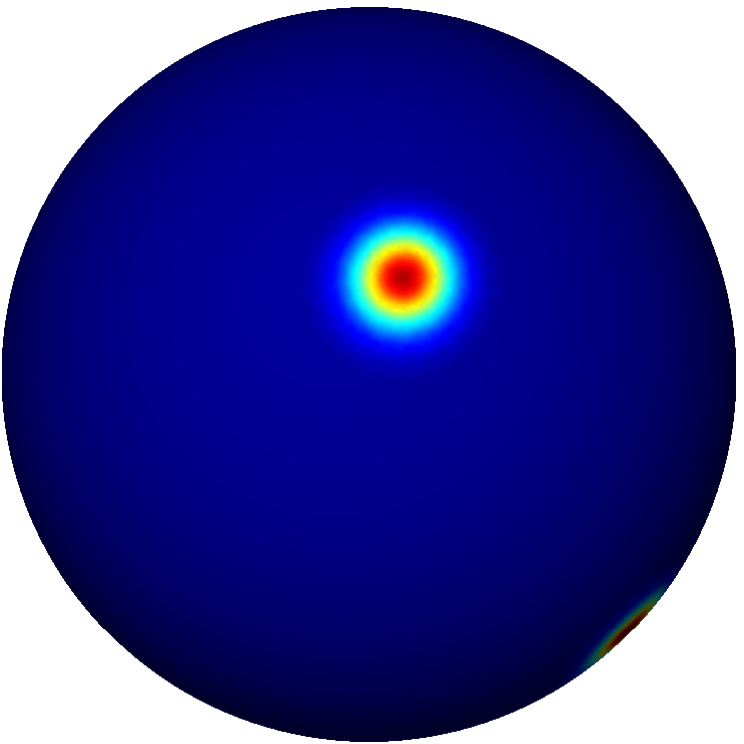}\label{fig:F_501}\hspace*{0.065\columnwidth}}
}
\caption{Case II: visualizations of $\mathcal{M}(F_k)$ for the unscented filter}\label{fig:visES_1}
\end{figure}

\subsection{Case II: Large Initial Uncertainty}\label{sec:NE2}

For the second case, the matrix parameter is chosen as
\begin{align*}
F(0)= 0_{3\times 3},
\end{align*}
which represents the uniform distribution on $\SO$, where the initial attitude is completely known. For example, this may happen when a satellite is discharged from a launch vehicle and tumbling freely in space. 

The corresponding numerical simulation results are summarized in Table \ref{tab:EST}, and Figures \ref{fig:ES_1} and \ref{fig:visES_1}. In this case, the unscented filter performs better than the first-order filter noticeably. Both the attitude estimation error and the uncertainty decrease over time, since there is no strong conflict between the measurement and the estimate as opposed to the first case. In \reffig{visES_1}, it is illustrated that the estimated distribution becomes concentrated, especially after the first attitude measurement is available at $t=0.1$. 

The presented cases for attitude estimation are particularly challenging since (i) the estimator is initially strongly confident about an incorrect attitude with the maximum error $180^\circ$, or the initial attitude is completely unknown; (ii) the considered attitude dynamics is swift and complex; (iii) both attitude and angular velocity measurement errors are relatively large; (iv) the attitude measurements are infrequent. It is shown that the proposed approach exhibits satisfactory, reasonable results even for the presented challenging cases. 

%These correspond to the cases where attitude estimators developed in terms of local coordinates or linearization tend to diverge. 

\section{Conclusions}

This paper formulates a compact form of an exponential density model, referred to the matrix Fisher distribution on $\SO$, and it presents several stochastic properties. It is shown that the shape of the distribution is specified by the nine matrix parameters, through the mean attitude, the principal axes, and the dispersion along every principal axis. This is comparable to the Gaussian distribution in $\Re^3$ that is determined by nine elements of the mean and the covariance that describe the similar attributes. The matrix Fisher distribution on $\SO$ is rich enough to characterize critical information of probability distributions on $\SO$, while being concise, especially compared with non-commutative harmonic analysis. As such, there is a great potential in the proposed matrix Fisher distribution for various stochastic analyses formulated globally on $\SO$.

In particular, two types of Bayesian attitude estimators are constructed utilizing the matrix Fisher distribution, either by the first moment matching or the unscented transform. It is illustrated that the proposed attitude estimator resolves several issues of the existing attitude estimators developed in terms of quaternions, such as singularities and ambiguities. Furthermore, they exhibit excellent convergence properties especially for challenging cases with large initial errors or uncertainties, including the uniform distributions.%, where the traditional estimators tend to fail. 

\appendix

\subsection{Proof of Theorem \ref{thm:C}}\label{sec:PfC}

\subsubsection*{Properties (i-iii)}

Substituting \refeqn{USV} into \refeqn{cF},
\begin{align*}
c(F) = \int_{\SO} \exp(\trs{SU^T R V}) dR. 
\end{align*}
We perform a change of variable from $R$ to $Q=U^T RV\in\SO$. This transforms $\SO$ into $\SO$, and from the invariance of the Haar measure, we have $dQ=d(U^T RV)=dR$. Therefore,
\begin{align}
c(F) = \int_{\SO} \exp(\trs{SQ}) dQ=c(S),  \label{eqn:cS0}
\end{align}
which shows the first equality of (i). The next property (ii) follows directly from this as the singular values of $F$ remain unchanged by either taking the transpose or multiplying it with another rotation matrix. Let $C=[e_3,e_2,e_1]\in\SO$. Then, any circular shift of the diagonal elements of $S$ can be written as $(C^m)^T S C^m$ for an integer $m$. From (ii), $c(S)=c((C^m)^T S C^m)$, which shows the second equality of (i). 

From the definition of the scaled Haar measure and the property of the probability density, we have $\int_{\SO}dR=\int_{\SO}p(R)dR=1$. It follows that when the probability density is non-uniform, i.e., $S\neq 0_{3\times 3}$, 
\begin{align}
\min_{R\in\SO}\{p(R)\} < 1 <\max_{R\in\SO}\{p(R)\}.\label{eqn:pmm}
\end{align}
Since the normalizing constant is independent of $R$, maximizing $p(R)$ is equivalent to maximizing $\trs{F^T R} = \trs{VSU^T R}=\trs{S(U^T RV)}$. Let the rotation matrix  $U^TRV$ be parameterized by the exponential map as
\begin{align*}
U^T RV=\exp(\theta\hat a)=I_{3\times 3}+\sin \theta\hat a +(1-\cos\theta)\hat a^2,
\end{align*}
for $\theta\in[0,2\pi)$ and $a\in\Sph^2$. 
Substituting this,
\begin{align}
\trs{S (U^T R V)} %& = \trs{S} + \frac{(1-\cos\|\eta\|)}{\|\eta\|^2}\tr{S\hat\eta^2}\\
& = \trs{S} - (1-\cos\theta)\sum_{(i,j,k)\in\mathcal{I}}(s_i+s_j)a_k^2.\label{eqn:trSQ}
\end{align}
Since $s_1\geq s_2\geq |s_3|\geq 0$, $s_i+s_j\geq 0$ for any $(i,j,k)\in\mathcal{I}$, $\trs{S (U^T R V)}$ is maximized when $\theta=0$, or equivalently $U^T RV=I_{3\times 3}$ and $R=UV^T$. The corresponding maximum value of the probability density is $\frac{1}{c(S)}\exp(\trs{S})$, which follows $c(S)< \exp(\trs{S})$ via \refeqn{pmm}. Similarly, \refeqn{trSQ} is minimized when $\theta=\pi$ and $a=[0,0,1]$, since $s_1+s_2\geq s_1+s_3\geq s_2+s_3$, and the minimum value of the density is $\frac{1}{c(S)}\exp(\trs{S}-2(s_1+s_2))$ which is less than 1. When $S=0_{3\times 3}$, $c(S)=1$, and the property (iii) becomes trivial. These show (iii).

\subsubsection*{Properties (iv)}

Next, consider (iv). Let $x\in\Sph^3$ be the quaternion of the rotation matrix $Q$, where the three-sphere is denoted by $\Sph^3=\{x\in\Re^4\,|\, \|x\|=1\}$. Let $q\in\Re^3$ and $q_4\in\Re$ be the vector part and the scalar part of the quaternion $x$, i.e., $x=[q^T,q_4]^T$.  The corresponding rotation matrix is written as
\begin{align}
Q(x) = (q_4^2-q^Tq)I + 2qq^T + 2q_4\hat q, \label{eqn:Qx}
\end{align}
(see, for example, \cite{ShuJAS93}). Using several properties of the trace, 
\begin{align*}
\trs{S Q(x)} & = \trs{(q_4^2 - q^T q) S + 2qq^TS}%\\
%&=(q_4^2 - q^T q)\trs{S} + 2q^T Sq%= \trs{S}q_4^2 + q^T(2S-\trs{S}I)q\\
%&
 = x^T B x,
\end{align*}
where the diagonal matrix $B\in\Re^{4\times 4}$ is given by
\begin{align}
B= \begin{bmatrix} 2S-\trs{S}I & 0_{3\times 1} \\ 0_{1\times 3} & \trs{S} \end{bmatrix}.\label{eqn:B}
\end{align}

Substituting this into \refeqn{cS0}, and by changing the integration variable, 
\begin{align}
c(S) = \int_{\mathsf{RP}^3} \exp(x^T B x) \mathcal{J}(x) dx, \label{eqn:cS1}
\end{align}
where the real projective space is defined as $\mathsf{RP}^3=\{x\in\Sph^3\,|\, x=-x\}$. This corresponds to $\Sph^3$ where the antipodal points are identified, and it is diffeomorphic to $\SO$ via \refeqn{Qx}, i.e., $Q(\mathsf{RP}^3)=\SO$. The scalar $\mathcal{J}(x)\in\Re$ is composed of two factors. The first one is to convert the three dimensional infinitesimal volume $dx$ on $\Sph^3$ to the three dimensional volume $Q(dx)$ on $\SO$, and the second factor accounts that $dx$ and $dQ$ are normalized by the volume of $\Sph^3$ and $\SO$ respectively. 

In \cite{ShuJAS93}, the perturbation of $Q(x)$ is given by
\begin{align*}
(Q^T \delta Q)^\vee & = 2(q_4 \delta q - q \delta q_4 - q\times \delta q) = J(x) \delta x,
\end{align*}
where the matrix $J\in\Re^{3\times 4}$ is 
\begin{align*}
J(x)=2\begin{bmatrix} q_4 I-\hat q & -q \end{bmatrix}.
\end{align*}
Therefore, the scaling factor is 
\begin{align*}
\mathcal{J}(x) = \frac{2\pi^2}{8\pi^2}\sqrt{\mathrm{det}[J(x)J(x)^T]} = 
\frac{2\pi^2}{8\pi^2}\sqrt{\mathrm{det}[4I_{3\times 3}}]=2,
\end{align*}
where $2\pi^2$ and $8\pi^2$ correspond to the volume of $\Sph^3$ and $\SO$, respectively. Furthermore, the three-sphere can be considered as $\Sph^3 = \{x,-x\,|\, x\in\mathsf{RP}^3\}$, and $x^T B x$ is an even function of $B$. Applying these to \refeqn{cS1},
\begin{align}
c(S) &= \int_{\mathsf{RP}^3} 2\exp(x^T B x)\, dx %\nonumber\\
%& = \int_{\mathsf{RP}^3} \exp(x^T B x)+\exp((-x)^T B (-x))\, dx\nonumber\\
=\int_{\Sph^3} \exp(x^T B x)\, dx.\label{eqn:cS2}
\end{align}

The above expression is equivalent to the normalizing constant of the Bingham distribution on $\Sph^3$, which is given by the hypergeometric function of matrix argument, ${}_1 F_1^{(2)} (\frac{1}{2},2;B)$~\cite{MarJup99}. In~\cite{KunSchMG04,WooAJS93}, it is shown that the hypergeometric function can be evaluated by
\begin{align}
{}_1 F_1^{(2)}& (\frac{1}{2},2;B) %& = \int_{0}^1 I_0\bracket{\frac{1}{2}(b_1-b_2)u} I_0\bracket{\frac{1}{2}(b_3-b_4)(1-u)}\exp\braces{\frac{1}{2}(b_1+b_2)u + \frac{1}{2}(b_3+b_4)(1-u)}\,du\\
 = \int_{-1}^1 \frac{1}{2}I_0\!\bracket{\frac{1}{4}(b_2-b_1)(1-u)}\nonumber\\
&\times  I_0\!\bracket{\frac{1}{4}(b_4-b_3)(1+u)} \exp\braces{-\frac{1}{2}(b_1+b_2)u}\,du,\label{eqn:b}
\end{align}
where $b_i$ denotes the $i$-th diagonal element of $B$ for $i\in\{1,2,3,4\}$.

%\begin{align*}
%p_{\mathrm{Bing}}(x)=\frac{1}{b(B)} \exp (x^T B x),
%\end{align*}
%with respect to the uniform distribution, where $b(B)\in\Re$ is a normalizing constant, defined such that $b(B)=\int_{\Sph^3 }\exp (x^T B x) dx$. 
%It has been shown that the normalizing constant $b(B)$ is given by the hypergeometric function of matrix argument, $b(B) = {}_1 F_1^{(2)} (\frac{1}{2},2;B)$~\cite{MarJup99}, which is shown to be evaluated as
%\begin{align}
%b(B) %& = \int_{0}^1 I_0\bracket{\frac{1}{2}(b_1-b_2)u} I_0\bracket{\frac{1}{2}(b_3-b_4)(1-u)}\exp\braces{\frac{1}{2}(b_1+b_2)u + \frac{1}{2}(b_3+b_4)(1-u)}\,du\\
%& = \int_{-1}^1 \frac{1}{2}I_0\!\bracket{\frac{1}{4}(b_2-b_1)(1-u)} I_0\!\bracket{\frac{1}{4}(b_4-b_3)(1+u)}\nonumber\\
%&\times \exp\braces{-\frac{1}{2}(b_1+b_2)u}\,du,\label{eqn:b}
%\end{align}
%in~\cite{KunSchMG04,WooAJS93}, where $b_i$ denotes the $i$-th diagonal element of $B$, and $I_0$ denotes the modified Bessel function of the first kind. 

In short, the normalizing constant for the matrix Fisher distribution on $\SO$ corresponds to the normalizing constant for the Bingham distribution on $\Sph^3$, when the matrix $B$ is defined by \refeqn{B}. The certain equivalence between the matrix Fisher distribution and the Bingham distribution was identified in~\cite{PreJRSSS86}, and an expression for $c(S)$ is presented in~\cite{WooAJS93} based on the relation. However, that reference did not consider the scaling factor $\mathcal{J}(x)$ properly, thereby presenting an erroneous expression.%it includes another erroneous factor $\frac{1}{2}$ at the above equation. 

Here, combining \refeqn{b} with \refeqn{B} yield \refeqn{cS} when $(i,j,k)=(1,2,3)$. From the property (i), \refeqn{cS} is satisfied for any $(i,j,k)\in\mathcal{I}$. This shows (iv).

\subsubsection*{Properties (v-vi)} It is straightforward to obtain \refeqn{dcS} by taking the derivatives of \refeqn{cS} with each of $s_i,s_j,s_k$ using \refeqn{I1} and the chain rule. 

Next, adding \refeqn{dcSb} with \refeqn{dcSa},
\begin{align*}
\deriv{c(S)}{s_i}&+\deriv{c(S)}{s_j}=\int_{-1}^1
\frac{1}{2}(1+u)I_0\!\bracket{\frac{1}{2}(s_i-s_j)(1-u)}\nonumber\\
&\quad \times  I_1\!\bracket{\frac{1}{2}(s_i+s_j)(1+u)}\exp (s_ku)\,du.
\end{align*}
Recall $I_0(x)\geq 1$ for any $x\in\Re$ and $I_1(x)\geq 0$ when $x\geq 0$. Therefore, the integrand of the above expression is greater than or equal to zero, and this shows \refeqn{dcSij}.
Similarly, subtracting \refeqn{dcSb} from \refeqn{dcSa},
\begin{align*}
\deriv{c(S)}{s_i}&-\deriv{c(S)}{s_j}=\int_{-1}^1
\frac{1}{2}(1-u)I_1\!\bracket{\frac{1}{2}(s_i-s_j)(1-u)}\nonumber\\
&\quad \times  I_0\!\bracket{\frac{1}{2}(s_i+s_j)(1+u)}\exp (s_ku)\,du,
\end{align*}
which is greater than or equal to zero when $s_i\geq s_j$. This follows $\deriv{c(S)}{s_1}\geq\deriv{c(S)}{s_2}\geq \deriv{c(S)}{s_3}$. Find $\deriv{c(S)}{s_2}$ from \refeqn{dcSa} with $(i,j,k)=(2,3,1)$.  Since $0\leq s_2-s_3,s_2+s_3$, the integrand of \refeqn{dcSa} is greater than or equal to zero, and therefore $\deriv{c(S)}{s_2}\geq 0$. Combined with \refeqn{dcSij}, these show \refeqn{dcS12}.

The remaining property (vi) directly follows from \refeqn{dcSc}, using the chain rule and \refeqn{I1}.

\subsection{Magnus Expansion}

\begin{theorem}[Magnus Expansion~\cite{MagCPAM54}]\label{thm:Magnus}
Consider a matrix differential equation
\begin{align}
\dot Y(t) = Y(t) A(t),\label{eqn:Y_dot}
\end{align}
with $Y(0)=Y_0\in\Re^{n\times n}$. The solution can be written as
\begin{align}
Y(t) = Y_0 \exp (X(t)),\label{eqn:Y}
\end{align}
with the matrix valued function $X(t)$ defined by
\begin{align}
\dot X(t) = d\exp^{-1}_{-X(t)} ( A(t)),\quad X(0)=0,
\end{align}
where $d\exp$ denote the inverse of the derivative of the matrix exponential. Applying Picard fixed point iteration yields
\begin{align}
X(t) = \int_0^t A(\tau) d\tau + \frac{1}{2}\int_0^t \bracket{ \int_0^\tau A(\sigma)d\sigma, A(\tau)} d\tau + \mathcal{O}(t^3),\label{eqn:X}
\end{align}
where $[A,B]=AB-BA$ denotes the adjoint operator.
\end{theorem}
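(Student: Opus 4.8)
The plan is to read the exponential representation \refeqn{Y} as a device that converts the linear equation \refeqn{Y_dot} into a single (nonlinear) equation for the unknown $X(t)$, and then to extract \refeqn{X} by Picard iteration. The engine of the whole argument is the formula for the $t$-derivative of a matrix exponential with time-dependent exponent. To obtain it I would set $P(s,t)=\exp(sX(t))$, note $\partial_s P = XP$, and use equality of mixed partials to get the linear equation in $s$, namely $\partial_s(\partial_t P)=\dot X P + X\,\partial_t P$ with $\partial_t P|_{s=0}=0$. Variation of constants gives $\partial_t P(s)=\int_0^s \exp((s-\sigma)X)\,\dot X\,\exp(\sigma X)\,d\sigma$; evaluating at $s=1$ and factoring $\exp(X)$ to the left yields $\tfrac{d}{dt}\exp(X)=\exp(X)\int_0^1 \exp(-\sigma\,\mathrm{ad}_X)(\dot X)\,d\sigma=\exp(X)\,d\exp_{-X}(\dot X)$, where $\mathrm{ad}_X(\cdot)=\bracket{X,\cdot}$ and $\int_0^1\exp(-\sigma\,\mathrm{ad}_X)\,d\sigma=(I-\exp(-\mathrm{ad}_X))/\mathrm{ad}_X=d\exp_{-X}$.

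With this identity in hand, I would define $X(t)$ as the solution of $\dot X=d\exp^{-1}_{-X}(A)$ with $X(0)=0$, which is well posed for $t$ small since $d\exp_{-X}$ is invertible near $X=0$. Then $Y=Y_0\exp(X)$ satisfies $\dot Y=Y_0\exp(X)\,d\exp_{-X}(\dot X)=Y_0\exp(X)A=YA$ and $Y(0)=Y_0$, so by uniqueness for the linear initial value problem \refeqn{Y_dot} it is the solution. Conversely, when $Y_0$ is invertible one may differentiate the representation directly and cancel the factor $Y$ to see that the same equation for $X$ is forced. This establishes the differential form of the claim for general $Y_0$.

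To reach \refeqn{X} I would substitute the Bernoulli-number series $d\exp^{-1}_{-X}(A)=\sum_{n\ge0}\tfrac{B_n}{n!}\,\mathrm{ad}_{-X}^{\,n}(A)=A+\tfrac12\bracket{X,A}+\mathcal{O}(\mathrm{ad}_X^2)$ into the equivalent integral form $X(t)=\int_0^t A(\tau)\,d\tau+\tfrac12\int_0^t\bracket{X(\tau),A(\tau)}\,d\tau+\cdots$, and iterate. Seeding with $X^{(0)}=0$ gives $X^{(1)}(t)=\int_0^t A(\tau)\,d\tau=\mathcal{O}(t)$; feeding this into the commutator term produces $X^{(2)}(t)=\int_0^t A(\tau)\,d\tau+\tfrac12\int_0^t\bracket{\int_0^\tau A(\sigma)\,d\sigma,\,A(\tau)}d\tau$, which is exactly \refeqn{X}. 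A brief order count closes the argument: both the discarded Bernoulli terms (each carrying at least two factors of $\mathrm{ad}_X$, hence two factors of $X=\mathcal{O}(t)$ against one $A$) and the corrections from further iterates contribute at order $\mathcal{O}(t^3)$.

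The step I expect to be the main obstacle is the derivative-of-exponential identity $\tfrac{d}{dt}\exp(X)=\exp(X)\,d\exp_{-X}(\dot X)$; once it is established, operator inversion, the Bernoulli series, and the Picard bookkeeping are routine. A secondary technical point is confirming that $d\exp_{-X}$ stays invertible along the solution, which holds for small $t$ because $\mathrm{ad}_X=\mathcal{O}(t)$ keeps the spectrum of $\mathrm{ad}_X$ away from the nonzero multiples of $2\pi i$ at which $d\exp$ degenerates.
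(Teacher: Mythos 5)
Your proposal is correct, but it cannot be compared against a proof in the paper because the paper does not prove this theorem at all: Theorem \ref{thm:Magnus} is stated with a citation to Magnus's original work~\cite{MagCPAM54}, and the only remark the paper adds is that the classical result for $\dot Y = A(t)Y(t)$ generalizes "straightforwardly" to the right-multiplication form \refeqn{Y_dot}. What you have done is supply that classical argument in full, and it holds up: the mixed-partials derivation of $\tfrac{d}{dt}\exp(X)=\exp(X)\,d\exp_{-X}(\dot X)$ via $P(s,t)=\exp(sX(t))$ and variation of constants is the standard one; your factoring of $\exp(X)$ to the \emph{left} is exactly the adaptation needed for \refeqn{Y_dot} (the classical $\dot Y=AY$ case factors it to the right), so you have in effect made precise the generalization the paper waves at; the forward implication plus uniqueness of the linear IVP correctly covers non-invertible $Y_0$; the Bernoulli-series inversion $d\exp^{-1}_{-X}(A)=A+\tfrac12\bracket{X,A}+\mathcal{O}(\mathrm{ad}_X^2)$ is right (with $B_1=-\tfrac12$ and $\mathrm{ad}_{-X}=-\mathrm{ad}_X$ the signs work out); and the order bookkeeping in the Picard step is sound, since both the discarded Bernoulli terms (e.g.\ $\tfrac{1}{12}\bracket{X,\bracket{X,A}}=\mathcal{O}(t^2)$ pointwise) and the error fed back through the commutator term integrate to $\mathcal{O}(t^3)$. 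The only caveats are the implicit smoothness/boundedness assumptions on $A(t)$ needed for the ODE for $X$ to be well posed and for the $\mathcal{O}(t^3)$ estimates to be uniform, which are harmless in the paper's setting where $A\,dt$ plays the role of $(H\,dW)^\wedge$; note also that Proposition \ref{prop:ERkp0} applies this expansion with stochastic increments, where the formal substitution is justified by the order-counting in $\Delta W_k$ done there, not by your deterministic error bound.
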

The Magnus expansion provides a series form of a solution \refeqn{Y} using the exponential map for a matrix differential equation given by \refeqn{Y_dot}. While the Magnus expansion is developed for a slightly different form of a matrix differential equation, namely $\dot Y(t) = A(t) Y(t)$ in~\cite{MagCPAM54,BlaCasPR09}, it is straightforward to generalize those results for \refeqn{Y_dot} to obtain \refeqn{Y}--\refeqn{X}. 

Using this, we find the solution of \refeqn{SDE} when $\Omega=0$ as follows.

\begin{prop}\label{prop:ERkp0}
Consider \refeqn{SDE}. Suppose $\Omega=0$. Then
\begin{align}
R_{k+1} = R_k \exp (\hat q_1 + \hat q_2 + \mathcal{O}(\|\Delta W_k\|^{3})),\label{eqn:Rkp}
\end{align}
where $\Delta W_k = W_{k+1}-W_k$, and 
\begin{align}
q_1 &= \int_{t_k}^{t_{k+1}} H(\tau) dW(\tau),\label{eqn:q1}\\
q_2 &= \frac{1}{2} \int_{t_k}^{t_{k+1}} \int_{t_k}^\tau H(\sigma) dW(\sigma)\times H(\tau) dW(\tau),\label{eqn:q2}
\end{align}
and the first moment of $R_{k+1}$ is given by
\begin{align}
\mathrm{E}[R_{k+1}]=\mathrm{E}[R_k]\braces{I_{3\times 3} +\frac{h}{2}(-\trs{G_k}I_{3\times 3}+G_k)+\mathcal{O}(h^{1.5})},\label{eqn:ERkpW0}
\end{align}
with $G_k=H_kH_k^T\in\Re^{3\times 3}$. 
\end{prop}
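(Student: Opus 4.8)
The plan is to construct the solution via the Magnus expansion of Theorem~\ref{thm:Magnus} and then extract its first moment by expanding the matrix exponential. With $\Omega=0$, \refeqn{SDE} reads $R^T dR = (H\,dW)^\wedge$, i.e.\ $dR = R\,(H\,dW)^\wedge$, which matches the Magnus form $\dot Y = YA$ upon replacing the increment $A(\tau)\,d\tau$ by $(H(\tau)\,dW(\tau))^\wedge$. Applying \refeqn{X} and using linearity of the hat map, the leading term is $\int_{t_k}^{t_{k+1}}(H\,dW)^\wedge = \hat q_1$ with $q_1$ as in \refeqn{q1}. For the second Magnus term I would invoke the Lie-algebra identity $[\hat a,\hat b]=\widehat{a\times b}$ on \so, which turns the commutator $\frac12\int_{t_k}^{t_{k+1}}[(\int_{t_k}^\tau H\,dW)^\wedge,(H(\tau)\,dW(\tau))^\wedge]$ into $\hat q_2$ with $q_2$ as in \refeqn{q2}. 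This establishes the exponential representation \refeqn{Rkp}. To pin down the remainder, I would use the scaling $\norm{\Delta W_k}\sim h^{1/2}$: each factor $H\,dW$ contributes order $h^{1/2}$, so $q_1=\mathcal{O}(h^{1/2})$, $q_2=\mathcal{O}(h)$, and the third Magnus term, being a triple noise integral, is $\mathcal{O}(\norm{\Delta W_k}^{3})$.

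For the moment \refeqn{ERkpW0} I would expand $\exp(\hat q_1+\hat q_2+\mathcal{O}(\norm{\Delta W_k}^{3})) = I + \hat q_1 + \hat q_2 + \tfrac12\hat q_1^2 + \mathcal{O}(h^{1.5})$, the terms $\hat q_1\hat q_2$ and $\hat q_2^2$ being of order $h^{1.5}$ or smaller. Taking expectations, both $q_1$ and $q_2$ are Itô integrals with adapted integrands, hence $\mathrm{E}[\hat q_1]=\mathrm{E}[\hat q_2]=0$, so the only surviving $\mathcal{O}(h)$ contribution is $\tfrac12\mathrm{E}[\hat q_1^2]$. Using $\hat a^2 = aa^T-\norm{a}^2 I$ together with the matrix Itô isometry $\mathrm{E}[q_1 q_1^T]=\int_{t_k}^{t_{k+1}}H H^T\,d\tau = h\,G_k+\mathcal{O}(h^{1.5})$ (evaluating $H$ at $t_k$ to leading order, with $G_k=H_kH_k^T$), I obtain $\tfrac12\mathrm{E}[\hat q_1^2]=\tfrac12(hG_k-\trs{hG_k}I)=\tfrac h2(-\trs{G_k}I+G_k)$.

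Finally, since $\hat q_1$ and $\hat q_2$ depend only on the increments of $W$ on $[t_k,t_{k+1}]$, which are independent of $R_k$, I would factor $\mathrm{E}[R_{k+1}] = \mathrm{E}[R_k]\,\mathrm{E}[\exp(\hat q_1+\hat q_2+\cdots)]$ and substitute the expansion above to reach \refeqn{ERkpW0}.

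The main obstacle is the careful stochastic bookkeeping at the Itô--Magnus interface: the Magnus series \refeqn{X} is stated for deterministic equations, so one must verify that replacing $A\,d\tau$ by $(H\,dW)^\wedge$ and discarding terms beyond second order in the noise is legitimate in the mean-square sense, and that no additional $\mathcal{O}(h)$ drift hides in the exponent. The conceptually key point to get right is that the $\mathcal{O}(h)$ drift in the Euclidean first moment is produced entirely by the quadratic term $\tfrac12\hat q_1^2$ — a manifestation of the curvature of \SO{} — while the genuinely second-order Magnus contribution $q_2$ averages to zero; the matrix Itô isometry step then supplies the exact coefficient $\tfrac h2(-\trs{G_k}I+G_k)$.
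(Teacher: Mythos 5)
Your proposal is correct and follows essentially the same route as the paper's proof: apply the Magnus expansion of Theorem~\ref{thm:Magnus} to the driftless equation to get the exponential representation \refeqn{Rkp} with $q_1,q_2$ via $[\hat a,\hat b]=(a\times b)^\wedge$, expand the exponential, and obtain the drift entirely from $\tfrac{1}{2}\mathrm{E}[\hat q_1^2]$ using $\hat a^2=aa^T-\|a\|^2I_{3\times 3}$, with the remainder controlled by $\mathrm{E}[\|\Delta W_k\|^3]=\mathcal{O}(h^{1.5})$. The only (cosmetic) difference is that where you invoke the martingale property of It\^o integrals and the It\^o isometry to conclude $\mathrm{E}[q_1]=\mathrm{E}[q_2]=0$ and $\mathrm{E}[q_1q_1^T]=hG_k+\mathcal{O}(h^2)$, the paper derives the same facts from Riemann-sum approximations of the It\^o integrals, using zero-mean and independence of Wiener increments (and the diagonality of $H$ for the $q_2$ computation).
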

\begin{proof}
Considering \refeqn{Y_dot} as $Y^{-1} dY = A dt$, and using $[\hat x,\hat y]=(x\times y)^\vee$ for any $x,y\in\Re^3$, Theorem \ref{thm:Magnus} can be applied to $R^{-1}dR = (HdW)^\wedge$ to obtain \refeqn{Rkp}--\refeqn{q2}. Note the additional terms beyond $q_2$ contains triple (or higher) order integrations with respect to $dW$, and they have the order of $\Delta W_k$ tripled.

Next, we find the mean of $q_1$, $q_1q_1^T$ and $q_2$ as follows. Let the time interval $[t_k,t_{k+1}]$ be evenly decomposed by the sequence $\{\tau_0=t_k,\tau_1,\ldots,\tau_{N_\tau}=t_{k+1}\}$ for a positive integer $N_\tau$. From the definition of Ito's integral, 
\begin{align*}
q_1 = \lim_{N_\tau\rightarrow\infty} \sum_{l=0}^{N_\tau-1} H(\tau_l)\Delta W(\tau_l),
\end{align*}
where $\Delta W(\tau_l)=W(\tau_{l+1})-W(\tau_l)$. According to the definition of the Wiener process~\cite{KarShr91,Oks14}, the increment of the Wiener process has the zero mean. Therefore, $\mathrm{E}[q_1]=0$. Also,
\begin{align*}
q_1q_1^T = \lim_{N_\tau\rightarrow\infty} \sum_{l,m=0}^{N_\tau-1}
H(\tau_l)\Delta W(\tau_l)\Delta W(\tau_m)^T H(\tau_m)^T.
\end{align*}
From the variance of the increment of the Wiener process, $\mathrm{E}[\Delta W(\tau_l)\Delta W(\tau_m)^T]=\delta_{l,m} (\tau_{l+1}-\tau_l)I_{3\times 3}$, where $\delta_{l,m}$ denotes the Kroneker delta. Thus,
\begin{align*}
\mathrm{E}[q_1q_1^T]&=\lim_{N_\tau\rightarrow\infty} \sum_{l=0}^{N_\tau-1}
H(\tau_l)H(\tau_l)^T (\tau_{l+1}-\tau_l)\nonumber\\
&=\int_{t_k}^{t_{k+1}} H(\tau)H(\tau)^T d\tau=hG_k+\mathcal{O}(h^2).
\end{align*}

For each $l$, let the interval $[t_k,\tau_l]$ be evenly divided by the sequence $\{\sigma_0=t_k,\sigma_1,\ldots,\sigma_{N_\sigma}=\tau_l\}$ for a positive integer $N_\sigma>1$. From \refeqn{q2},
\begin{align*}
2q_2 & = \lim_{N_\tau\rightarrow\infty}\sum_{l=0}^{N_\tau-1}
\braces{\lim_{N_\sigma\rightarrow\infty}\sum_{m=0}^{N_\sigma-1}H(\sigma_m(l))\Delta W(\sigma_m(l))}\nonumber\\
&\quad\times H(\tau_l) \Delta W(\tau_l). 
\end{align*}
From the definition of the cross product and since $H$ is diagonal, the $i$-th element of $H(\sigma_m)\Delta W(\sigma_m)\times H(\tau_l) \Delta W(\tau_l)$ is given by
\begin{align*}
e_i^T& \{H(\sigma_m)\Delta W(\sigma_m)\times H(\tau_l) \Delta W(\tau_l)\}\\
&=H_j(\sigma_m)\Delta W_j(\sigma_m)
H_k(\tau_l) \Delta W_k(\tau_l)\\
&\quad -
H_k(\sigma_m)\Delta W_k(\sigma_m)
H_j(\tau_l) \Delta W_j(\tau_l),
\end{align*}
for $(i,j,k)\in\mathcal{I}$. Since $\Delta W_j$ and $\Delta W_k$ are mutually independent, and $\mathrm{E}[\Delta W_j]=\mathrm{E}[\Delta W_k]=0$, the mean of the above expression is zero, as well as $q_2$. In short, $\mathrm{E}[q_1]=\mathrm{E}[q_2]=0$, and $\mathrm{E}[q_1q_1^T]=hG_k +\mathcal{O}(h^2)$. 

Expanding the exponential map of \refeqn{Rkp},
\begin{align*}
R_{k+1}=R_k(I_{3\times 3} + \hat q_1 + \hat q_2 + \frac{1}{2}\hat q_1^2+\mathcal{O}(\|\Delta W_k\|^3)).
\end{align*}
Taking the mean, the first moment is given by
\begin{align*}
\mathrm{E}[R_{k+1}]=\mathrm{E}[R_k](I_{3\times 3} + \frac{1}{2}\mathrm{E}[\hat q_1^2]+\mathcal{O}(h^2)+\mathcal{O}(\mathrm{E}[\|\Delta W_k\|^3])).
\end{align*}
Since $\hat x^2 = xx^T -\|x\|^2 I_{3\times 3}$ for any $x\in\Re^3$,  $\mathrm{E}[\hat q_1^2]= \mathrm{E}[q_1q_1^T - \|q_1\|^2 I_{3\times 3}]= hG_k - \trs{hG_k}I_{3\times 3}$. Also as $\mathrm{E}[dW^2]=dt$ for a Wiener process $W$, $\mathcal{O}(\mathrm{E}[\|\Delta W_k\|^3]))$ is considered as  $\mathcal{O}(h^{1.5})$. These yield \refeqn{ERkpW0}.
\end{proof}

\subsection{Numerical Implementation}

When the proper singular values are large, one may encounter numerical overflow in computing the normalizing constant $c(S)$ via \refeqn{cS}. In this appendix, we introduce an  exponentially scaled normalizing constant to implement the proposed algorithms in a numerically robust fashion. A software package composed with Matlab is available in~\cite{LeeGit17}.

In  several numerical computing libraries to compute the modified Bessel function of the first kind, there is a function available to compute an exponentially scaled value~\cite{Amo}. More specifically, the exponentially scaled modified Bessel functions of the first kind are defined as
\begin{gather}
\bar I_0(x) = \exp(-|x|)  I_0(x),\label{eqn:I0_bar}\\
\bar I_1(x) = \exp(-|x|) I_1(x).\label{eqn:I1_bar}
\end{gather}
For example, in Matlab, $I_0(x)$ is computed by the command \texttt{besseli(0,x)}, and the scaled value $\bar I_0(x)$ can be obtained by \texttt{besseli(0,x,1)} with the additional option specified by the last number one. The function $\bar I_0(x)$ is differentiable when $x\neq 0$, and from \refeqn{I1}
\begin{align}
\frac{d\bar I_0(x)}{dx} = \bar I_1(x)-\mathrm{sgn}[x]\bar I_0(x).\label{eqn:dI0_bar}
\end{align}

Motivated by these, we define the \textit{exponentially scaled} normalizing constant as follows.
\begin{definition}
For a matrix Fisher distribution $\mathcal{M}(F)$, let the proper singular value decomposition is given by \refeqn{USVp}. Its exponentially scaled normalizing constant is defined as
\begin{equation}
\bar c(S) = \exp(-\trs{S}) c(S).\label{eqn:c_bar}
\end{equation}
\end{definition}

We can show that the exponentially scaled normalizing constant and its derivatives are written in terms of the scaled modified Bessel functions as summarized below.
\begin{prop}\label{thm:C_bar}
Suppose the proper singular value decomposition of $F$ is given by \refeqn{USVp}. The exponentially scaled normalizing constant for the matrix Fisher distribution \refeqn{c_bar} satisfies the following properties for any $(i,j,k)\in\mathcal{I}$.
\begin{itemize}
\item[(i)] $\bar c(S)$ is evaluated by
\begin{align}
&\bar c(S)=\nonumber\\
&\int_{-1}^1 \frac{1}{2}\bar I_0\!\bracket{\frac{1}{2}(s_i-s_j)(1-u)} 
\bar I_0\!\bracket{\frac{1}{2}(s_i+s_j)(1+u)}\nonumber\\
& \quad \times \exp ((\min\{s_i,s_j\}+s_k)(u-1))\,du.\label{eqn:c_bar_1}
\end{align}
\item[(ii)] The first order derivatives of $\bar c(S)$ are given by
\begin{align}
&\deriv{\bar c(S)}{s_k}=\nonumber\\
&\int_{-1}^1 \frac{1}{2}\bar I_0\!\bracket{\frac{1}{2}(s_i-s_j)(1-u)} 
\bar I_0\!\bracket{\frac{1}{2}(s_i+s_j)(1+u)}\nonumber\\
& \quad \times \exp ((\min\{s_i,s_j\}+s_k)(u-1))(u-1)\,du.
\label{eqn:dc_bar}
\end{align}
\item[(iii)] The second order derivatives of $\bar c(S)$ with respect to $s_k$ are given by
\begin{align}
&\frac{\partial^2\bar c(S)}{\partial s_k^2} = \nonumber\\
&\int_{-1}^1 \frac{1}{2}\bar I_0\!\bracket{\frac{1}{2}(s_i-s_j)(1-u)} 
\bar I_0\!\bracket{\frac{1}{2}(s_i+s_j)(1+u)}\nonumber\\
& \quad \times \exp ((\min\{s_i,s_j\}+s_k)(u-1))(u-1)\,du.
\label{eqn:ddc_bar_kk}
\end{align}
Also, the second order mixed derivatives are
\begin{align}
&\frac{\partial^2 \bar c(S)}{\partial s_i\partial s_j}  = \nonumber\\
&\int_{-1}^1 \frac{1}{4}\bar I_1\bracket{\frac{1}{2}(s_j-s_k)(1-u)}
 \bar I_0\bracket{\frac{1}{2}(s_j+s_k)(1+u)} \nonumber\\
&\times u(1-u)\exp ((s_i+\min\{s_j,s_k\})(u-1))\nonumber\\
& + \frac{1}{4}\bar I_0\bracket{\frac{1}{2}(s_j-s_k)(1-u)}  
\bar I_1\bracket{\frac{1}{2}(s_j+s_k)(1+u)}\nonumber\\
&\times u(1+u)\exp ((s_i+\min\{s_j,s_k\})(u-1))\,du\nonumber\\
& - \deriv{\bar c(S)}{s_i}- \deriv{\bar c(S)}{s_j}-\bar c(S).\label{eqn:ddc_bar_ij}
\end{align}
\item[(iv)] The derivatives of $c(S)$ can be rediscovered by 
\begin{align}
\deriv{c(S)}{s_i} & = e^{\trs{S}}\parenth{\bar c(S)+\deriv{\bar c(S)}{s_i}},\label{eqn:dc_dc_bar}\\
\frac{\partial^2 c(S)}{\partial s_i\partial s_j} %& = \deriv{}{s_j} e^{\trs{S}}\parenth{\bar c(S)+\deriv{\bar c(S)}{s_i}}\\
& = e^{\trs{S}}\parenth{\bar c(S)+\deriv{\bar c(S)}{s_i}+\deriv{\bar c(S)}{s_j}+\frac{\partial^2 \bar c(S)}{\partial s_i\partial s_j}}.\label{eqn:ddc_ddc_bar}
\end{align}
\end{itemize}
\end{prop}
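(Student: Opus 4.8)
The plan is to reduce every claim to the already-established unscaled formulas of Theorem~\ref{thm:C}, combined with the defining relation $\bar c(S)=\exp(-\trs{S})c(S)$ and the definitions \refeqn{I0_bar}, \refeqn{I1_bar} of the scaled Bessel functions. The engine behind parts (i)--(iii) is a single algebraic identity. For $u\in[-1,1]$ we have $1\pm u\geq 0$, and since the proper singular values satisfy $s_i+s_j\geq 0$ and $s_j+s_k\geq 0$ by \refeqn{s12}, the ``sum'' arguments $\frac{1}{2}(s_a+s_b)(1+u)$ are nonnegative while the ``difference'' arguments $\frac{1}{2}(s_a-s_b)(1-u)$ may have either sign. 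Writing $I_0(x)=e^{|x|}\bar I_0(x)$ and $I_1(x)=e^{|x|}\bar I_1(x)$ (both valid for all signs of $x$ directly from \refeqn{I0_bar}, \refeqn{I1_bar}), collecting the $|x|$-terms and using
\[
\tfrac{1}{2}|s_a-s_b|(1-u)+\tfrac{1}{2}(s_a+s_b)(1+u)=\max\{s_a,s_b\}+\min\{s_a,s_b\}\,u
\]
is what converts the full exponent $\trs{S}$ into a $\min\{\cdot,\cdot\}$.

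First I would prove (i): substitute $I_0(x)=e^{|x|}\bar I_0(x)$ into \refeqn{cS}, multiply by $e^{-\trs{S}}=e^{-(s_i+s_j+s_k)}$, and apply the displayed identity with $(a,b)=(i,j)$. The combined exponent collapses to $(\min\{s_i,s_j\}+s_k)(u-1)$, which is exactly \refeqn{c_bar_1}. Part (ii) and the pure $s_k$ second derivative in (iii) then follow by differentiating \refeqn{c_bar_1} under the integral sign: since $s_k$ enters \refeqn{c_bar_1} only through the trailing exponential, each $s_k$-derivative simply inserts a factor $(u-1)$, giving \refeqn{dc_bar} and \refeqn{ddc_bar_kk}.

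Part (iv) is pure calculus from $\bar c=e^{-\trs{S}}c$. Differentiating gives $\deriv{\bar c(S)}{s_i}=e^{-\trs{S}}\big(\deriv{c(S)}{s_i}-c(S)\big)$, which rearranges to \refeqn{dc_dc_bar}; differentiating once more in $s_j$ and re-substituting this relation yields \refeqn{ddc_ddc_bar}. Rearranging \refeqn{ddc_ddc_bar} also produces the bridge identity
\[
\frac{\partial^2\bar c(S)}{\partial s_i\partial s_j}=e^{-\trs{S}}\frac{\partial^2 c(S)}{\partial s_i\partial s_j}-\deriv{\bar c(S)}{s_i}-\deriv{\bar c(S)}{s_j}-\bar c(S),
\]
which I would use to obtain the mixed second derivative \refeqn{ddc_bar_ij}: insert \refeqn{ddcS} for the unscaled mixed derivative, factor $I_1=e^{|\cdot|}\bar I_1$ and $I_0=e^{|\cdot|}\bar I_0$, and apply the same exponent identity now with $(a,b)=(j,k)$ so that the exponential factor becomes $\exp((s_i+\min\{s_j,s_k\})(u-1))$. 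The three trailing correction terms $-\deriv{\bar c(S)}{s_i}-\deriv{\bar c(S)}{s_j}-\bar c(S)$ then appear automatically from the bridge identity.

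The main obstacle is the careful sign and absolute-value bookkeeping in the exponent identity: one must track that $\frac{1}{2}(s_i-s_j)(1-u)$ can be negative, invoke the evenness of $\bar I_0$ and the fact that $I_1(x)=e^{|x|}\bar I_1(x)$ remains exact for negative $x$, and use the orderings \refeqn{s123}--\refeqn{s12} to guarantee $s_a+s_b\geq 0$ so that the modulus can be dropped on the sum argument. Once the identity $\frac{1}{2}|s_a-s_b|(1-u)+\frac{1}{2}(s_a+s_b)(1+u)=\max\{s_a,s_b\}+\min\{s_a,s_b\}\,u$ is established, every remaining step is routine differentiation under the integral sign and tracking of which singular value plays the ``exponential'' role under each circular shift $(i,j,k)\in\mathcal{I}$.
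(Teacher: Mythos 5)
Your proposal is correct and takes essentially the same route as the paper's proof: substituting the scaled Bessel definitions into \refeqn{cS} for (i) (your $\max/\min$ exponent identity is simply a unified form of the paper's two-case split $s_i\geq s_j$ versus $s_j\geq s_i$), differentiating under the integral in $s_k$ for (ii) and the pure second derivative, obtaining (iv) by direct differentiation of \refeqn{c_bar}, and recovering the mixed derivative \refeqn{ddc_bar_ij} by inserting the scaled Bessel functions into the unscaled \refeqn{ddcS} and rearranging \refeqn{ddc_ddc_bar}. This last step is exactly the paper's device for sidestepping the non-differentiability of $\bar I_0$ at the origin, which your bridge-identity route likewise avoids.
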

\begin{proof}
Substitute \refeqn{I0_bar} into \refeqn{cS}, and rearrange. When $s_i\geq s_j$, it reduces to
\begin{align*}
&c(S)  = \\
&e^{s_i} \int_{-1}^1 \frac{1}{2}\bar I_0\!\bracket{\frac{1}{2}(s_i-s_j)(1-u)} 
\bar I_0\!\bracket{\frac{1}{2}(s_i+s_j)(1+u)}\nonumber\\
&\quad \times \exp ((s_j+s_k)u)\,du,
\end{align*}
or when $s_j\geq s_i$,
\begin{align*}
& c(S)  = \\
& e^{s_j} \int_{-1}^1 \frac{1}{2}\bar I_0\!\bracket{\frac{1}{2}(s_i-s_j)(1-u)} 
\bar I_0\!\bracket{\frac{1}{2}(s_i+s_j)(1+u)}\nonumber\\
&\quad \times \exp ((s_i+s_k)u)\,du,
\end{align*}
for any $(i,j,k)\in\mathcal{I}$. From \refeqn{c_bar}, these show \refeqn{c_bar_1}. Taking the derivatives of \refeqn{c_bar_1} with respect to $s_k$, it is straightforward to show \refeqn{dc_bar}, \refeqn{ddc_bar_kk}.

Next, taking the derivatives of \refeqn{dc_bar} with respect to $s_i$ is cumbersome as $\bar I_0(x)$ is not differentiable at $x=0$. Instead, substitute \refeqn{I0_bar} and \refeqn{I1_bar} to \refeqn{ddcS} to obtain
\begin{align*}
&\frac{\partial^2 c(S)}{\partial s_i\partial s_j}  = e^{\trs{S}}\nonumber\\
&\int_{-1}^1 \frac{1}{4}\bar I_1\bracket{\frac{1}{2}(s_j-s_k)(1-u)}
 \bar I_0\bracket{\frac{1}{2}(s_j+s_k)(1+u)} \nonumber\\
&\times u(1-u)\exp ((s_i+\min\{s_j,s_k\})(u-1))\nonumber\\
& + \frac{1}{4}\bar I_0\bracket{\frac{1}{2}(s_j-s_k)(1-u)}  
\bar I_1\bracket{\frac{1}{2}(s_j+s_k)(1+u)}\nonumber\\
&\times u(1+u)\exp ((s_i+\min\{s_j,s_k\})(u-1))\,du.
\end{align*}
Equations \refeqn{dc_dc_bar} and \refeqn{ddc_ddc_bar} also follow directly from \refeqn{c_bar}. Substituting the above equation to \refeqn{ddc_ddc_bar} yield \refeqn{ddc_bar_ij}.
\end{proof}

We can rewrite the stochastic properties of the matrix Fisher distribution and the proposed attitude estimation schemes in terms of the exponentially scaled normalizing constant and its derivatives. These yield more numerically robust implementation that avoid numerical overflows that possibly appear particularly when the proper singular values are large. 

First, the probability density value given by \refeqn{MF} can be rewritten in terms of the scaled normalizing constant as
\begin{equation}
p(R) = \frac{1}{\bar c(S)} \exp(\trs{F^T R}-\trs{S}).\label{eqn:MF_bar}
\end{equation}
The first and the second moments presented in Theorem \ref{thm:Q} are given by
\begin{align}
\mathrm{E}[Q_{ij}]
=\begin{cases}1+\dfrac{1}{\bar c(S)}\dderiv{\bar c(S)}{s_i}=1+\dderiv{\log \bar c(S)}{s_i} & \mbox{if $i= j$},\\
0 & \mbox{otherwise},
\end{cases}\label{eqn:M1_bar}
\end{align}
for $i,j\in\{1,2,3\}$.
\begin{align}
\mathrm{E}[Q_{ij}Q_{kl}]
=\begin{cases}1+\dfrac{1}{\bar c(S)}\parenth{\displaystyle \deriv{\bar c(S)}{s_i}+\deriv{\bar c(S)}{s_j}+\dfrac{\partial^2 \bar c(S)}{\partial s_i\partial s_j}}\\
\hfill \mbox{if $i=j$ and $k=l$},\\
0 \hfill \mbox{otherwise},
\end{cases}\label{eqn:M2_bar}
\end{align}
for $i,j,k,l\in\{1,2,3\}$. 

For the unscented transform defined at Definition \ref{def:UT},  the term $\log c(S)$ at \refeqn{costhetai} can be replaced with $\log c(S)=\trs{S}+ \log \bar c(S)$, and \refeqn{M1_bar} can be used to compute the weighting parameters \refeqn{Wi}.

Next, the implicit equation \refeqn{MLEs} can be equivalently reformulated in terms of the scaled normalizing constant as
\begin{equation}
\frac{1}{\bar c(S)}\deriv{\bar c(S)}{ s_i}=d_i-1,
\label{eqn:MLEs_bar}
\end{equation}
for all $i\in\{1,2,3\}$. This is arranged into a vector form as
\begin{equation}
f(s)=\frac{1}{\bar c(S)}\deriv{\bar c(S)}{ s}-\begin{bmatrix} d_1-1\\d_2-1\\d_3-1\end{bmatrix}=0,\label{eqn:f_cbar}
\end{equation}
which can be solved via the following Newton's iteration
\begin{equation}
s^{(q+1)}= s^{(q)}- \parenth{\deriv{f(s)}{s}\bigg|_{s=s^{(q)}}}^{-1} f(s^{(q)}),
\end{equation}
where the superscript $(q)$ denotes the number of iterations, and the gradient of \refeqn{f_cbar} is given by
\begin{equation}
\deriv{f(s)}{s} = \frac{1}{\bar c(S)}\frac{\partial^2 \bar c(S)}{\partial s^2}-\frac{1}{\bar c(S)^2}\deriv{\bar c(S)}{s}\parenth{\deriv{\bar c(S)}{s}}^T.
\end{equation}

%This is arranged into a vector form as
%\begin{equation}
%f(s)=\deriv{\bar c(S)}{ s}- \bar c(S)\begin{bmatrix} d_1-1\\d_2-1\\d_3-1\end{bmatrix}=0,
%\end{equation}
%which can be solved via the following Newton's iteration
%\begin{equation}
%s^{q+1}= s^q- \parenth{\deriv{f(s)}{s}\bigg|_{s=s^q}}^{-1} f(s^q)
%\end{equation}
%where the superscript $q$ denotes the numer of iterations, and the gradient is given by
%\begin{equation}
%\deriv{f(s)}{s} = \frac{\partial^2 \bar c(S)}{\partial s^2}-\begin{bmatrix} d_1-1\\d_2-1\\d_3-1\end{bmatrix}\parenth{\deriv{\bar c(S)}{s}}^T.
%\end{equation}
%These can be utilized in the step 14 of Table \ref{tab:FAE} for the first-order attitude estimator, and the step 7 of Table \ref{tab:UAP} for the unscented attitude estimator. 

%\begin{align}
%\bar c(S) & = \int_{-1}^1 \frac{1}{2}\bar I_0\!\bracket{\frac{1}{2}(s_i-s_j)(1-u)} 
%\bar I_0\!\bracket{\frac{1}{2}(s_i+s_j)(1+u)}\nonumber\\
%&\quad \times \exp ((s_j+s_k)(u-1))\,du.
%\end{align}
%Similarly, when $s_j\geq s_i$,
%\begin{align}
%\bar c(S) & = \int_{-1}^1 \frac{1}{2}\bar I_0\!\bracket{\frac{1}{2}(s_i-s_j)(1-u)} 
%\bar I_0\!\bracket{\frac{1}{2}(s_i+s_j)(1+u)}\nonumber\\
%&\quad \times \exp ((s_i+s_k)(u-1))\,du.
%\end{align}
%
%The first order derivatives are given as follows. When $s_i\geq s_j$,
%\begin{align}
%\deriv{\bar c(S)}{s_k} & = \int_{-1}^1 \frac{1}{2}\bar I_0\!\bracket{\frac{1}{2}(s_i-s_j)(1-u)} 
%\bar I_0\!\bracket{\frac{1}{2}(s_i+s_j)(1+u)}\nonumber\\
%&\quad \times \exp ((s_j+s_k)(u-1))\,du.
%\end{align}

\bibliography{/Users/tylee.fdcl/Documents/BibMaster17,/Users/tylee.fdcl/Documents/tylee}
\bibliographystyle{IEEEtran}

\begin{IEEEbiography}{Taeyoung Lee}
 is an associate professor of the Department of Mechanical and Aerospace Engineering at the George Washington University. He received his doctoral degree in Aerospace Engineering and his master's degree in Mathematics at the University of Michigan in 2008. His research interests include geometric mechanics and control with applications to complex aerospace systems. 
\end{IEEEbiography}
\vfill

\end{document}